
\documentclass[11pt]{article}

% This is a "bare-bones" thesis template file.  For more examples,
% of how to use more LaTeX features, look in the 'sample' directory.

%\usepackage{textcase}
%\usepackage[pdftex]{graphicx}
%\usepackage{hyperref}
%\hypersetup{breaklinks=true}
\usepackage{authblk}
\usepackage{fullpage}  % for narrow margins
\usepackage{float}
\usepackage{pb-diagram}  		% for commutative diagrams
\usepackage{esvect}
% PA-28Dec2011 I also add url for the "url" command:
\usepackage[utf8]{inputenc} % allow utf-8 input
\usepackage[T1]{fontenc}    % use 8-bit T1 fonts
\usepackage{hyperref}       % hyperlinks
\usepackage{url}            % simple URL typesetting
\usepackage{booktabs}       % professional-quality tables
\usepackage{amsfonts}       % blackboard math symbols
\usepackage{nicefrac}       % compact symbols for 1/2, etc.
\usepackage{microtype}      % microtypography
\usepackage{fullpage}  % for narrow margins
\usepackage{float}
\usepackage{pb-diagram}  
\usepackage{makecell}		% for commutative diagrams
%\usepackage[ps2pdf, linktocpage=true]{hyperref}

% PA-28Dec2011 I also add url for the "url" command:
\usepackage{url}

\usepackage{algorithm, algorithmicx,algpseudocode, listings} %format of the algorithm
\usepackage{multirow} %//multirow for format of table
\usepackage{hhline}  % for \hhline in tables
\usepackage{amsmath}
\usepackage{xcolor}

\usepackage{graphicx}
\usepackage{amscd}
\usepackage{amssymb,mathrsfs}
\input{epsf.sty}
\usepackage{amsthm,amscd}
\usepackage{color}
\usepackage{latexsym}
\usepackage{epic}
\usepackage{appendix}
\usepackage{enumerate}
\usepackage{longtable}
\usepackage{lscape}
\usepackage{extarrows}
\usepackage{epstopdf}
\usepackage{listings}
%%%%%%
\newlength\myindent

\newcommand{\Rmnum}[1]{\expandafter\@slowromancap\romannumeral #1@}
\newcommand{\inner}[3][]{{\langle #2,#3 \rangle_{#1}}}

%%%%%%

\allowdisplaybreaks

%%%%%%

%% for wen's comments
 %how to use: \whcomm{[my comments]old materials}{the updates in the paper}
%\newcommand{\whcomm}[2]{{}{#2}}
 %how to use: \whcomm{[my 
\newcommand{\delete}[1]{{}}

%\newcommand{\whcommsec}[2]{{\sf\color{purple} #1}{\sf\color{blue} #2}}

%\newcommand{\whcommtrd}[2]{{\sf\color{purple} #1}{\sf\color{blue} #2}}

%\newcommand{\kwcomm}[2]{{\sf\color{red} #1}{\sf\color{blue} #2}}

%\newcommand{\kw}[1]{{\color{red}#1}}

%\newcommand{\whfirrev}[2]{{\sf\color{purple} #1}{\color{blue} #2}}

%\newcommand{\whsecrev}[2]{{\sf\color{purple} #1}{\color{blue} #2}}

%%%%%%

\newtheorem{theorem}{Theorem}[section]
\newtheorem{lemma}{Lemma}[section]

\newtheorem{assumption}{Assumption}[section]

\newtheorem{example}{Example}[section]
\numberwithin{equation}{section}

%%%%%%

\DeclareMathOperator{\T}{\mathrm{T}}

\DeclareMathOperator{\grad}{\mathrm{grad}}

\DeclareMathOperator{\Exp}{\mathrm{Exp}}
\DeclareMathOperator{\Prox}{\mathrm{Prox}}
\DeclareMathOperator{\id}{\mathrm{id}}

\DeclareMathOperator{\N}{\mathrm{N}}

\DeclareMathOperator{\F}{\mathrm{F}}
\DeclareMathOperator{\D}{\mathrm{D}}

\DeclareMathOperator{\trace}{\mathrm{trace}}

\DeclareMathOperator{\diag}{\mathrm{diag}}
\DeclareMathOperator{\St}{\mathrm{St}}

\DeclareMathOperator*{\argmin}{arg\,min}
\DeclareMathOperator{\tr}{trace}
%%%%%%

\begin{document}

\title{A Riemannian Optimization Approach to Clustering Problems\footnotetext{Corresponding author: Wen Huang (\url{wen.huang@xmu.edu.cn}). Wen Huang was partially  supported by the National Natural Science Foundation of China (NO. 12001455) and the Fundamental Research Funds for the Central Universities (NO. 20720190060). K. A. Gallivan was partially supported by the U.S. National Science Foundation under grant CIBR 1934157.}}
\author[1]{Wen Huang}
\author[2]{Meng Wei}
\author[3]{Kyle A. Gallivan}
\author[4]{Paul Van Dooren}

\affil[1]{School of Mathematical Sciences, Xiamen University, Xiamen, China. \vspace{.15cm}}
\affil[2]{Department of Mathematics, Florida State University, 208 Love Building, 1017 Academic Way, Tallahassee, FL 32306-4510, USA.\vspace{.15cm}}
\affil[3]{Department of Mathematics, Florida State University, USA.\vspace{.15cm}}
\affil[4]{Department of Mathematical Engineering, Universit\'e catholique de Louvain, Louvain-La-Neuve, Belgium.}

\maketitle

%\setcounter{tocdepth}{2}
%\tableofcontents

\begin{abstract}
This paper considers the optimization problem in the form of 
$
\min_{X \in \mathcal{F}_v} f(x) + \lambda \|X\|_1,
$
where $f$ is smooth, $\mathcal{F}_v = \{X \in \mathbb{R}^{n \times q} : X^T X = I_q, v \in \mathrm{span}(X)\}$, and $v$ is a given positive vector. The clustering models including but not limited to the models used by $k$-means, community detection, and normalized cut can be reformulated as such optimization problems. It is proven that the domain $\mathcal{F}_v$ forms a compact embedded submanifold of $\mathbb{R}^{n \times q}$ and optimization-related tools %\whcomm{}{
including a family of computationally efficient retractions and an orthonormal basis of any normal space of $\mathcal{F}_v$%} 
are derived. An inexact accelerated Riemannian proximal gradient method %\whcomm{}{
that allows adaptive step size %} 
is proposed and its global convergence is established. Numerical experiments on community detection in networks and normalized cut for image segmentation are used to demonstrate the performance of the proposed method.
\end{abstract}
%%%%%%%%%%%

\section{Introduction}
Optimization on Riemannian manifolds %, also called Riemannian optimization, 
concerns optimizing a real-valued objective function defined on a Riemannian manifold. It has been of interest due to many important applications, e.g., image segmentation~\cite{RinWir2012} and recognition~\cite{TurVeeSri2011}, electrostatics and electronic structure calculation~\cite{WenYin2012,HJLWY2019}, computer vision~\cite{HGSA2014,SH2021}, signal processing~\cite{WCCL2016,Vandereycken2013,HH2017}, numerical linear algebra~\cite{SI2013}, community detection~\cite{WHGV2021}, and machine learning~\cite{CS2015}. 

Many Riemannian optimization methods for smooth objectives have been proposed and systemically analyzed, e.g., Riemannian trust-region Newton method~\cite{BAKER08}, Riemannian Broyden family method including BFGS method and its limited-memory version~\cite{RinWir2012,HUANG2013,HGA2014,HuaAbsGal2018}, Riemannian trust-region symmetric rank-one update method and its limited-memory version~\cite{HUANG2013,HAG13,HG2022}, Riemannian Newton method and Riemannian non-linear conjugate gradient method~\cite{AMS2008,SI2015,Sato2015,Zhu2016}. However, the work of Riemannian optimization for nonsmooth objective functions is relatively limited. Most work considers on the subgradient-based methods~\cite{GH2015a,GH2015b,ZS2016,BFM2017,HHY2018}. These methods either focus on geodesically convex objective functions or require solving a quadratic program to high accuracy, which limits the scope of applications. When the objective functions have structure,  more efficient optimization algorithms can be developed. Recently, Chen et al.~\cite{CMSZ2019} considered nonsmooth problems on the Stiefel manifold that has the splittable form $\min_{X \in \mathcal{M}} f(X) + g(X)$, where the manifold $\mathcal{M}$ is the Stiefel manifold $\St(q, n) = \{X \in \mathbb{R}^{n \times q} : X^T X = I_q\}$, $f$ is smooth, and $g$ is nonsmooth but admits a simple proximal mapping. A proximal gradient method is proposed therein with global convergence and is applicable for solving large-scale problems. The proximal mapping is solved by a semi-smooth Newton algorithm. However, no convergence rate analysis is given. In~\cite{HuaWei2019b}, a different version %\whcomm{}{
of the proximal gradient method in~\cite[Section~10.2]{Beck2017} %} 
for the splittable function is proposed without restricting the manifold to be the Stiefel manifold. A convergence rate analysis is also given. The proximal mapping in~\cite{HuaWei2019b} involves an iterative algorithm that uses a semi-smooth Newton algorithm in every iteration and therefore can be less efficient than that in~\cite{CMSZ2019}\footnote{In some cases, the proximal mapping in~\cite{HuaWei2019b} can be solved efficiently without resorting to the semi-smooth Newton algorithm, see~\cite[Section~5.2]{HuaWei2019b}.}. In~\cite{HuaWei2021}, an inexact Riemannian proximal gradient method without solving the Riemannian proximal mapping of~\cite{HuaWei2019b} exactly is proposed. It relaxes the requirements of solving the proximal mapping while preserving the convergence properties. 
%\whcomm{}{
However, the theoretical results therein rely on the knowledge of the constant $L$ of the $L$-retraction-smoothness of the function $f$, which may not be available. %\whcomm{[WH: I added the following sentence. I did not add it because this paragraph is used to review literature, whereas the below new sentence is about our contribution and should be given in the contribution section. Adding this sentence make readers to think we say the same thing multiple times.]}{
The present paper proposed a different inexact proximal gradient method that avoids this difficulty. %}
%}

In this paper, we consider the nonsmooth optimization problems over a subset of the Stiefel manifold, i.e.,
\begin{equation} \label{prob3}
\min_{X \in \mathcal{F}_v} f(X) + \lambda \|X\|_1,	
\end{equation}
where the gradient of $f$ is Lipschitz continuous, $\mathcal{F}_v = \{X \in \mathbb{R}^{n \times q} : X^T X = I_q, v \in \mathrm{span}(X)\}$, $v \in \mathbb{R}^n$ is a vector with all entries being positive, and $\mathrm{span}(X)$ denotes the columns space of~$X$. 

%We further assume that the function~$f$ satisfies Assumption~\ref{as03}.
%\begin{assumption} \label{as03}
%For any $X \in \mathbb{R}^{n \times q}$, the equation $f(X) = f(X D_i)$ holds for any $D_i = \mathrm{diag}(1, \ldots, 1, -1, 1, \ldots, 1)$, $i = 1, \ldots, q$, where the $i$-th diagonal entry of $D_i$ is -1.
%\end{assumption}

\subsection{Applications}
Problem~\eqref{prob3} can be viewed as an alternative formulation of the problem 
\begin{equation} \label{prob1}
\min_{X \in \mathcal{A}_{v}} f(X)
\end{equation}
under certain circumstances, where $\mathcal{A}_{v} = \{X \in \mathbb{R}^{n \times q} : X^T X = I_q, X \geq 0, v \in \mathrm{span}(X) \}$, and $X \geq 0$ denotes that all entries of $X$ are nonnegative. 
%, and a matrix in $\mathcal{A}_{n, q}$ is called an assignment matrix in this paper\footnote{In~\cite{YZW2008}, such a matrix is called a weighted cluster indicator matrix.}. 
We next give a few important clustering problems that can be formulated in terms of~\eqref{prob1} and defer the discussions about connections between Problem~\eqref{prob1} and Problem~\eqref{prob3}.

\begin{example}[$k$-means model]
The $k$-means algorithm~\cite{Mac1967,HarWon1979} was proposed to cluster vectors in $\mathbb{R}^d$. It is one of the most popular clustering algorithms due to its simplicity and efficiency. Given $n$ points $a_i$ in $\mathbb{R}^d$ and $k$ initial estimations of the means of $k$ groups, the $k$-means algorithm first assigns $a_i, i = 1, \ldots, n$ to their closest means and creates $k$ groups. Then the $k$ means are updated by computing the means of the new $k$ groups. Such process is repeated until the algorithm converges. The $k$-means algorithm turns out to be a minimization algorithm for solving the optimization problem 
\begin{equation} \label{eq:app1}
\min_{X \in \mathcal{A}_{\mathbf{1}_n}} ||A-XX^TA||_F^2,
\end{equation}
where $A = [a_1, a_2, \ldots, a_n]^T$,  and $\mathbf{1}_n$ denotes the vector with all entries being one, see~\cite{boutsidis2009unsupervised}.
\end{example}

%\begin{example}[Discriminative $k$-means]
%When the data sets live in a very high dimensional space, the task of clustering becomes challenging due to the curse of dimensionality. In~\cite{DingLi2007}, the dimensionality reduction technique, Linear Discriminant Analysis, is combined with the $k$-means clustering. Such framework is further analyzed in~\cite{YZW2008} and is shown to be equivalent to a kernel $k$-means. The resulting optimization problem is given therein by
%\begin{equation} \label{eq:app5}
%\min_{X \in \mathcal{A}_{\mathbf{1}_n}, \lambda > 0} \mathrm{trace}\left(X^T \left(I_n + \frac{1}{\lambda} A^T A \right)^{-1} X\right) + \log \det\left( I_n + \frac{1}{\lambda} A^T A \right)
%\end{equation}
%where $\lambda > 0$ is the regularization parameter. This problem is solved by alternating between the computation of $X$ for a given $\lambda$ and the computation of $\lambda$ for a given $X$. The former one is in the form of~\eqref{prob1}.
%
%%It is pointed out here that there exist many other clustering frameworks based on $k$-means model and their formulations are in the expression of~\eqref{prob1}, including kernel $k$-means, discriminative nonnegative spectral clustering, spectral clustering.
%% see papers: Discriminative Nonnegative Spectral Clustering with Out-of-Sample Extension, and Kernel k-means: spectral clustering and normalized cuts
%% estimation of spark in ...
%\end{example}

%\begin{example}[Normalized cut]
%% see  S. X. Yu and J. Shi. Multiclass spectral clustering. In International Conference on Computer Vision, 2003
%\end{example}

\begin{example}[Community Detection] 
Real-world network systems often have a community structure, which is the division of network nodes into groups such that the network connections are denser within the groups and are sparser between the groups, see \cite{newman2004finding}. These groups are called communities, or modules. A variety of community detection algorithms have been developed in recent years~\cite{newman2004fast,newman2006modularity,newman2007mixture,blondel2008fast,rosvall2008maps,yang2016modularity}. Among them, modularity optimization approaches have been shown to be highly effective in practical applications, see~\cite{fortunato2010community}. In~\cite{WHGV2021}, it is proven that in an ideal graph, the global minimizer of $f:\mathcal{A}_{\mathbf{1}_n} \rightarrow \mathbb{R}: X \mapsto - \mathrm{trace}(X^T M X)$ is a matrix that represents the ground truth, where $M = A - A \mathbf{1}_n \mathbf{1}_n^T A / (\mathbf{1}_n^T A \mathbf{1}_n)$ is the modularity matrix and $A$ is the adjacency matrix of the graph. In the presence of noise, the community detection is still formulated as the optimization problem
\begin{equation} \label{eq:app2}
\min_{X \in \mathcal{A}_{\mathbf{1}_n}} - \mathrm{trace}(X^T M X),
\end{equation}
under the assumption that the noise is not significant enough to change its minimizer.
\end{example}

\begin{example}[Other Graph Partitioning Techniques]
Beside community detection, other graph partitioning problems including general weighted graph cuts, such as ratio association, ratio cut, normalized cut, and Kernighan-Lin objective function,  can be formulated as an optimization problem, as shown in~\cite{DGK2005},
\begin{align} \label{eq:app6}
&\min_{\substack{Y^T D Y = I_q, Y^T Y \hbox{ is diagonal, }, Y \geq 0, \mathbf{1}_n \in \mathrm{span}(Y)}} - \mathrm{trace}(Y^T D K D Y),
%\hbox{subject to } & X \in \{ \mathrm{diag}(v) Z : Z \geq 0, Z^T Z \hbox{ is diagonal, and } \mathbf{1} \in \mathrm{span}(Z) \}.
\end{align}
where $K \in \mathbb{R}^{n \times n}$ is symmetric and $D \in \mathbb{R}^{n \times n}$ is a diagonal matrix with all entries being positive. These graph partitioning problems have been used in many areas, such as circuit layout~\cite{CS1994} and image segmentation~\cite{SM2000}. Letting $X$ denote $D^{1/2} Y$, it follows that $X \in \mathcal{A}_{v}$, where $v$ is a vector formed by the square roots of the diagonal entries in $D$, i.e., $v = \mathrm{diag}(D^{1/2})$. Therefore, Problem~\eqref{eq:app6} can be reformulated into
\begin{equation} \label{eq:app7}
\min_{X \in \mathcal{A}_v} - \mathrm{trace} (X^T D^{1/2} K D^{1/2} X),
\end{equation}
which is in the form of~\eqref{prob1}.
%where $X = \mathrm{diag}(v) Z$, $v \in \mathbb{R}^n$ is given, $Z \geq 0$, $Z^T Z$ is diagonal and $$.
%where $v$ is a given positive vector. 
%Take the normalized cut as an example. Let $A$ denote the adjacency matrix of a graph, $D$ denotes the diagonal degree matrix with $D_{ii} = \sum_{j = 1}^n A_{i j}$. Therefore, the matrices $\bar{M}$ and $v$ in~\eqref{eq:app6} respectively are
%$
%\bar{M} = D^{-1/2} A D^{-1/2} \hbox{ and } v = \mathrm{diag}(D^{1/2}).
%$
\end{example}

\paragraph{Connections between Problem~\eqref{prob3} and Problem~\eqref{prob1}} Problem~\eqref{prob1} and Problem~\eqref{prob3} are connected in such a way that the problems above can be solved with the latter using techniques developed in this paper. Problem~\eqref{prob1} can be reformulated by replacing the non-negative constraints $X \geq 0$ with a sparsity constraint $\|X\|_0 = n$ where $\|X\|_0$ corresponds to the total number of nonzero elements in $X$,  which yields
\begin{equation} \label{prob2}
\min_{X \in \mathcal{B}_v} f(X),
\end{equation}
where 
\begin{equation*}
\mathcal{B}_v = \{X \in \mathbb{R}^{n \times q} : X^T X = I_q, \|X\|_0 = n, v \in \mathrm{span}(X) \}.
\end{equation*}
%\whcomm{}{
Problem~\eqref{prob1} and Problem~\eqref{prob2} are essentially equivalent in the sense that their solutions are connected, see in Lemma~\ref{le06}.
\begin{lemma} \label{le06}
Consider Problem~\eqref{prob1} and Problem~\eqref{prob2} with the objective function $f$ satisfying $f(X) = f(X D_i)$ for any $i$, where $D_i = \mathrm{diag}(1, \ldots, 1, -1, 1, \ldots, 1)$, i.e., the $i$-th diagonal entry of $D_i$ is -1. The following two statements hold:
\begin{itemize}
	\item Let $X$ be any matrix in $\mathcal{B}_v$. Then for any column of $X$, denoted by $x_i$, the signs of all nonzero entries in $x_i$ are the same.
	\item Define a mapping $\vartheta: \mathbb{R}^{n \times q} \mapsto \mathbb{R}^{n \times q}: X \mapsto \hat{X} = X D_{j_1} D_{j_2} \ldots D_{j_s}$, where $j_1$, $j_2$, $\ldots$, $j_s$ are the indices of the columns of $X$ whose nonzero entries are all negative. Then $X_*$ is a global minimizer of Problem~\eqref{prob2} in the sense that $f(X_*) \leq f(Y), \forall Y \in \mathcal{B}_v$ if and only if $\vartheta(X_*)$ is a global minimizer of Problem~\eqref{prob1} in the sense that $f(\vartheta(X_*)) \leq f(Z), \forall Z \in \mathcal{A}_v$.
\end{itemize}
\end{lemma}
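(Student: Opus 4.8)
The plan is to first pin down the combinatorial structure of any $X \in \mathcal{B}_v$ — this yields the first statement at once — and then use the sign-invariance hypothesis on $f$ to transfer global optimality between $\mathcal{B}_v$ and $\mathcal{A}_v$.

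For the structure, I would start from $v \in \mathrm{span}(X)$, writing $v = Xc$ with $c \in \mathbb{R}^q$. Since every entry of $v$ is strictly positive, for each row index $i$ one has $v_i = \sum_{j=1}^q X_{ij} c_j \neq 0$, so row $i$ of $X$ has at least one nonzero entry; as there are $n$ rows and, by $\|X\|_0 = n$, exactly $n$ nonzeros in total, each row has exactly one. Hence the column supports $\{\, i : X_{ij} \neq 0 \,\}$, $j = 1, \dots, q$, are pairwise disjoint, and the orthonormality $X^T X = I_q$ reduces to $\|x_j\| = 1$ for each $j$. I would also note that no $c_j$ vanishes: column $j$ has unit norm, hence some nonzero $X_{ij}$, and $c_j = 0$ would give $v_i = X_{ij} c_j = 0$, a contradiction. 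Fixing a column $j$ and any row $i$ with $X_{ij} \neq 0$, the identity $v_i = X_{ij} c_j > 0$ forces $\sign(X_{ij}) = \sign(c_j)$, independently of $i$; this is exactly the first statement.

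Next I would record two auxiliary facts. Running the same argument on any $X \in \mathcal{A}_v$ — where disjointness of the column supports now comes from $X \ge 0$ together with $x_j^T x_k = 0$ for $j \neq k$ — shows $\|X\|_0 = n$, so $\mathcal{A}_v \subseteq \mathcal{B}_v$ and $\vartheta$ restricts to the identity on $\mathcal{A}_v$. For a general $X \in \mathcal{B}_v$, the matrix $\vartheta(X)$ still has orthonormal columns, satisfies $v \in \mathrm{span}(\vartheta(X)) = \mathrm{span}(X)$, and by the first statement has all columns nonnegative; hence $\vartheta(X) \in \mathcal{A}_v$. Finally, since each $D_i$ is diagonal with $D_i^2 = I_q$ and right multiplication by $D_i$ keeps a point inside $\mathcal{B}_v$, iterating the hypothesis $f(Z) = f(Z D_i)$ along the factors $D_{j_1}, \dots, D_{j_s}$ gives $f(\vartheta(X)) = f(X)$ for every $X \in \mathcal{B}_v$.

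The equivalence then drops out. If $\vartheta(X_*)$ minimizes $f$ over $\mathcal{A}_v$, then for any $Y \in \mathcal{B}_v$ we have $\vartheta(Y) \in \mathcal{A}_v$, so $f(X_*) = f(\vartheta(X_*)) \le f(\vartheta(Y)) = f(Y)$, i.e.\ $X_*$ minimizes over $\mathcal{B}_v$; conversely, if $X_*$ minimizes over $\mathcal{B}_v$, then for any $Z \in \mathcal{A}_v \subseteq \mathcal{B}_v$ we get $f(\vartheta(X_*)) = f(X_*) \le f(Z)$, so $\vartheta(X_*)$ minimizes over $\mathcal{A}_v$. The only delicate point is the structural step — proving "exactly one nonzero per row", and from it the disjointness of the column supports and the non-vanishing of the coefficients $c_j$; once that is established, the rest is a short chain of (in)equalities.
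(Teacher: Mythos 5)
Your proof is correct and follows the only natural route here, which is the one the paper intends: the paper's own proof is just the two assertions ``the first statement holds by the definition of $\mathcal{B}_v$'' and ``the second holds by the assumption on $f$''. Your write-up supplies exactly the details the paper leaves implicit --- the one-nonzero-per-row argument from $v = Xc > 0$ and $\|X\|_0 = n$, the inclusion $\mathcal{A}_v \subseteq \mathcal{B}_v$, the facts that $\vartheta$ maps $\mathcal{B}_v$ into $\mathcal{A}_v$ and fixes $\mathcal{A}_v$, and the invariance $f(\vartheta(X)) = f(X)$ --- and all of these steps check out.
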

\begin{proof}
The first statement holds by the definition of $\mathcal{B}_v$. The second statement holds by the assumption on the function $f$.
\end{proof}
%}
%To see this, note that there are only non-negative columns or non-positive columns in any matrix $X \in \mathcal{B}_v$ by definition of $\mathcal{B}_v$. Let $j_1, j_2, \ldots, j_s$ denote the indices of the non-positive columns in $X$. It follows that $\tilde{X} = X D_{j_1} D_{j_2} \ldots D_{j_s} \in \mathcal{A}_{v}$, where $D_i = \mathrm{diag}(1, \ldots, 1, -1, 1, \ldots, 1)$ and the $i$-th diagonal entry is -1. Furthermore, the objective functions in clustering problems including~\eqref{eq:app1}, %~\eqref{eq:app5}
%~\eqref{eq:app2}, and~\eqref{eq:app6} all satisfy $f(X) = f(X D_i)$ for any $i$.  %Assumption~\ref{as03}. %
%Therefore,  one can obtain a matrix in $\mathcal{A}_{v}$ from any $X \in \mathcal{B}_v$ without changing the objective function value. It follows that a matrix $X \in \mathcal{B}_v$ is a global minimizer of Problem~\eqref{prob2} if and only if $X D_{j_1} D_{j_2} \ldots D_{j_s} \in \mathcal{A}_{v}$ is a global minimizer of Problem~\eqref{prob1}. 
%\whcomm{}{
Due to the constraints of $\mathcal{B}_v$, the sparsest matrix in~$\mathcal{B}_v$ has $n$ nonzero entries. We reformulate Problem~\eqref{prob2} and use one norm penalization to promote the sparsity of $X$, which yields a continuous optimization problem in~\eqref{prob3}. Using one norm to promote sparsity on manifold is not new and has been widely used for the Stiefel manifold, see e.g.,~\cite{JoTrUd2003a,XBC2022}.
If the minimizer of~\eqref{prob3}, denoted by $X_*$, is sufficiently  close to $\mathcal{B}_v$, then one can find the closest matrix in $\mathcal{B}_v$ by a mapping $P_{\mathcal{B}_{v}} (X_*)$, see Lemma~\ref{le07}. If the element in the $i$-th row $j$-th column of $P_{\mathcal{B}_{v}} (X_*)$ is not zero, then the $i$-th object is in the $j$-th cluster. %} 
%\whcomm{}{
\begin{lemma} \label{le07}
Let $v \in \mathbb{R}^n$ be a positive vector, $W$ denote $\diag(v)$, $Y$ denote a matrix in $\mathcal{B}_v$, $d_i$ denote the number of nonzero entries in $i$-th column of $Y$, $u_i \in \mathbb{R}^{d_i}$ denote the vector forming by the nonzero entries of the $i$-th column of $Y$, and $u \in \mathbb{R}^n$ denote $(u_1^T \; u_2^T \; \ldots \; u_q^T)^T$. If $X_* \in \mathcal{F}_v$ is sufficiently close to $Y$, % in the sense that there exists a sufficient small $\delta > 0$ such that $\|X_* - Y\|_{\F} < \delta$, 
then 
%it holds that
%\begin{equation} \label{e48}
%Y = \argmin_{Z \in \mathcal{B}_v} \| Z - X_* \|_{W^{-2}}^2 := \trace\left( (Z - X_*)^T W^{-2} (Z-X_*) \right).
%\end{equation}
it holds that
\[
Y = P_{\mathcal{B}_{v}} (X_*),
\]
where $P_{\mathcal{B}_{v}} (X_*) = W P_{\mathcal{B}_{\mathbf{1}_n}} ( W^{-1} X_*)$, 
$P_{\mathcal{B}_{\mathbf{1}_n}} (X_*) = \left( \frac{b_1}{\|b_1 \odot v\|} \; \ldots \; \frac{b_q}{\|b_q \odot v \|} \right)$, $\odot$ denotes the Hadamard product,
$b_j \in \mathbb{R}^n$ for $j = 1, 2, \ldots, q$, and
\[
(b_j)_i = 
\left\{
\begin{array}{cc}
\mathrm{sign}( (X_*)_{i j} ) & \hbox{if $(X_*)_{i j}$ has the largest magnitude in the $i$-th row;\footnotemark} \\
0 & \hbox{otherwise.}
\end{array}
\right.
\]
\end{lemma}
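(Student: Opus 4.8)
The plan is to exploit the fact that the constraints defining $\mathcal{B}_v$ are extremely rigid, so that any $Y \in \mathcal{B}_v$ is determined by purely combinatorial data, and then to show that closeness of $X_*$ to $Y$ forces the explicit formula to reconstruct exactly that data. First I would analyse an arbitrary $Y \in \mathcal{B}_v$. Since $v \in \mathrm{span}(Y)$, write $v = Yc$ with $c \in \mathbb{R}^q$; as $v > 0$, every row of $Y$ must be nonzero, and combined with $\|Y\|_0 = n$ (and the fact that $Y$ has $n$ rows) this forces each row of $Y$ to contain \emph{exactly one} nonzero entry. Hence the column supports $S_1, \dots, S_q$ of $Y$ form a partition of $\{1, \dots, n\}$, with each $S_j \neq \varnothing$ because $Y^T Y = I_q$ makes every column of $Y$ a unit vector. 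Let $\sigma(i)$ denote the unique $j$ with $i \in S_j$, and set $p_j := \big(\sum_{i \in S_j} v_i^2\big)^{1/2} > 0$. Restricting $v = Yc$ to the rows of $S_j$ gives $v_i = Y_{i\sigma(i)} c_j$ for $i \in S_j$; since the subvector $u_j$ of the $j$-th column has unit norm, this yields $|c_j| = p_j$ and $Y_{ij} = \varepsilon_j v_i / p_j$ for $i \in S_j$ (and $Y_{ij} = 0$ for $i \notin S_j$), where $\varepsilon_j := \sign(c_j) \in \{-1, +1\}$. The point to retain is that $Y$ is completely determined by the partition $(S_j)_j$ and the signs $(\varepsilon_j)_j$.

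Next I would make ``sufficiently close'' quantitative. Put $\delta := \tfrac12 \min_i |Y_{i\sigma(i)}|$, which is positive because each $Y_{i\sigma(i)} \neq 0$, and suppose $\|X_* - Y\|_F < \delta$, so that every entry of $X_*$ is within $\delta$ of the corresponding entry of $Y$. Then for each row $i$: the entry in column $\sigma(i)$ has magnitude $> |Y_{i\sigma(i)}| - \delta \ge \delta$, while every other entry of that row has magnitude $< \delta$ because the corresponding entry of $Y$ vanishes; hence the largest-magnitude entry of row $i$ of $X_*$ is attained uniquely at column $\sigma(i)$. Moreover $|(X_*)_{i\sigma(i)} - Y_{i\sigma(i)}| < \delta \le \tfrac12 |Y_{i\sigma(i)}|$ forces $\sign\big((X_*)_{i\sigma(i)}\big) = \sign(Y_{i\sigma(i)}) = \varepsilon_{\sigma(i)}$. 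Since $W^{-1} X_*$ is obtained from $X_*$ by scaling row $i$ by the positive factor $1/v_i$, the per-row location of the largest-magnitude entry and its sign are unchanged, so the same conclusions hold for $Z := W^{-1} X_*$, the matrix to which the formula is actually applied.

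Finally I would substitute and simplify. By the previous paragraph, in the definition of $P_{\mathcal{B}_{\mathbf{1}_n}}(Z)$ the vector $b_j$ equals $\varepsilon_j$ on the indices of $S_j$ and $0$ elsewhere. Therefore $b_j \odot v$ is supported on $S_j$ with $i$-th entry $\varepsilon_j v_i$, so $\|b_j \odot v\| = p_j$, and the $j$-th column of $P_{\mathcal{B}_{\mathbf{1}_n}}(Z)$ has $i$-th entry $\varepsilon_j / p_j$ for $i \in S_j$ and $0$ otherwise. Left-multiplying by $W = \diag(v)$ then produces the matrix whose $(i,j)$ entry is $\varepsilon_j v_i / p_j$ for $i \in S_j$ and $0$ otherwise, which by the first paragraph is precisely $Y$. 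Hence $P_{\mathcal{B}_v}(X_*) = W P_{\mathcal{B}_{\mathbf{1}_n}}(W^{-1} X_*) = Y$, and any $\delta$ as above serves as the meaning of ``sufficiently close''.

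I do not expect a genuine obstacle: the one nontrivial idea is the rigidity observation of the first paragraph---that every matrix in $\mathcal{B}_v$ arises from a partition of the rows into $q$ nonempty blocks, with the $j$-th column equal, up to sign, to the restriction of $v$ to the $j$-th block after normalization, so that $\mathcal{B}_v$ is essentially a finite set---after which the rest is a routine continuity argument together with bookkeeping. The only place requiring care is that the ``largest magnitude in the $i$-th row'' test is evaluated on $W^{-1} X_*$, not on $X_*$; this is harmless precisely because the two matrices differ by a positive diagonal row scaling and hence share the per-row argmax and signs, but it should be stated explicitly. It is also worth noting that the argument uses only Frobenius-norm closeness of $X_*$ to $Y$; the orthonormality and span constraints defining $\mathcal{F}_v$ play no role.
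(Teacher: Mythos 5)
Your proof is correct and follows essentially the same route as the paper's: identify the rigid structure of $Y \in \mathcal{B}_v$ (exactly one nonzero per row, each column a signed, normalized restriction of $v$ to its support), then observe that closeness of $X_*$ to $Y$ preserves the per-row largest-magnitude location and sign so that the explicit formula reconstructs $Y$. Your write-up is more detailed than the paper's — you derive the structure rather than assuming a block-diagonal form without loss of generality, you quantify ``sufficiently close'' via $\delta = \tfrac12\min_i|Y_{i\sigma(i)}|$, and you explicitly justify that the positive diagonal row scaling by $W^{-1}$ leaves the per-row argmax and signs unchanged — but the underlying argument is the same.
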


\begin{proof}
%Define $\tilde{Z} = W^{-1} Z$, $\tilde{X}_* = W^{-1} X_*$, and $\tilde{Y} = W^{-1} Y$. Problem~\eqref{e48} can be reformulated as
%\[
%\min_{\tilde{Z}^T W^2 \tilde{Z} = I_p, \|\tilde{Z}\|_0 = n, \mathbf{1}_n \in \mathrm{span}(\tilde{Z})} \|\tilde{Z} - \tilde{X}_*\|_{\F}^2
%\]
Without loss of generality, assume that $Y$ has the form
\begin{equation*}
Y = \diag(u_1, u_2, \ldots, u_q) :=
\begin{pmatrix}
	u_1 & 0 & \ldots & 0 \\
	0 & u_2 & \ldots & 0 \\
	\ldots & \ldots & \ddots & \vdots \\
	0 & 0 & \ldots & u_q
\end{pmatrix}	
\end{equation*}
Partition the vector $v$ by $v = (v_1^T \; v_2^T \; \ldots v_q^T)^T$, where $v_i \in \mathbb{R}^{d_i}$. It follows that $u_i = s_i v_i / \|v_i\|, i = 1, 2, \ldots, q$, where $s_i$ is either one or negative one. Therefore, $W^{-1} Y = \diag( \frac{s_1}{\|v_1\|} \mathbf{1}_{d_1}, \frac{s_2}{\|v_2\|} \mathbf{1}_{d_2}, \ldots, \frac{s_q}{\|v_q\|} \mathbf{1}_{d_q} )$.
If $W^{-1}X_*$ is sufficiently close to $W^{-1}Y$ in the sense that the location of the largest magnitude entry of each row does not change, then it holds that 
\[
%\diag(\underbrace{s_1, \ldots, s_1}_{d_1}, \underbrace{s_2, \ldots, s_2}_{d_2}, \ldots, \underbrace{s_q, \ldots, s_q}_{d_q}) 
W^{-1} Y = P_{\mathcal{B}_{\mathbf{1}_n}} (W^{-1} X_*),
\]
which implies $Y = P_{\mathcal{B}_{v}} (X_*)$.
\end{proof}
%}
Overall, Problems~\eqref{prob3},~\eqref{prob1} and~\eqref{prob2} are closely connected and can be viewed as optimization models for solving the same applications.

\subsection{Our contribution}
In this paper, we propose a new optimization model given in~\eqref{prob3} to characterize clustering problems, %\whcomm{}{
including $k$-means model, community detection, normalized cut, and other graph partitioning techniques. %}. 
It is proven here that the domain $\mathcal{F}_v$ forms an embedded submanifold of $\mathbb{R}^{n \times q}$. %\whcomm{}{
A family of computationally efficient retractions is developed. An orthonormal basis of any normal space of $\mathcal{F}_{v}$ is given. Such a basis yields a computationally efficient characterization of the normal space and is important in Riemannian proximal gradient methods. %}
%The optimization-related Riemannian geometry and tools are derived. 
An accelerated Riemannian proximal gradient method that does not require solving the Riemannian proximal mapping exactly is developed, analyzed, and evaluated. 
%\whcomm{}{
Note that most existing Riemannian proximal gradient methods require solving their subproblems exactly~\cite{CMSZ2019,HuaWei2019b,HuaWei2019}. Compared to the only existing inexact Riemannian proximal gradient method~\cite{HuaWei2021}, the algorithm proposed in this paper allows adaptive step sizes rather than an unknown fixed step size and still guarantees the global convergence.
%Compared to the existing inexact Riemannian proximal gradient method in~\cite{HuaWei2021} the method in this paper allows adaptive step sizes and guarantees the global convergence.
%}
%\whcomm{[TODO]}{} Note that the existing Riemannian proximal gradient methods in~\cite{CMSZ2019,HuaWei2019} all theoretically require the Riemannian proximal mapping solved exactly. 
In the numerical experiments, the proposed model and optimization algorithm is shown to have performance superior to existing state-of-the-art algorithms in community detection and normalized cut problems. %we compare the effectiveness and efficiency of the proposed algorithm by comparing it to existing algorithms.

\subsection{Related work} 

To the best of our knowledge, literature does not consider Problems~\eqref{prob3},~\eqref{prob1}, or~\eqref{prob2} for generic functions $f$. They focus on some special formulations of the objective $f$. If the function $f$ is given by 
\begin{equation} \label{e09}
f(X) = -\trace(X^T M X)
\end{equation}
with positive semidefinite matrix $M$, then Problem~\eqref{prob2} with $v = \mathbf{1}_n$ is a commonly-encountered objective function in the task of clustering. Spectral-type of clustering algorithms and $k$-means-based algorithms have been proposed~\cite{YS2003,DGK2004,YZW2008}. For example, the spectral clustering algorithm in~\cite{YS2003} first finds a basis of the eigenspace that corresponds to the $q$ largest eigenvalues, and then finds a matrix in $\mathcal{C} := \{X \in \mathbb{R}^{n \times q} : X^T X \hbox{ is diagonal}, X \mathbf{1}_q = \mathbf{1}_n \}$ that is closest to the eigenspace. The kernel $k$-means algorithm views the matrix $M$ as a kernel matrix. It follows that the standard $k$-means algorithm can be used~\cite{DGK2004}. 

For a generic objective $f$, the closest formulation to Problem~\eqref{prob1} is given in~\cite{JMWC2019,QPX2021}. The formulation therein is Problem~\eqref{prob1} with the constraint $v \in \mathrm{span}(X)$ being dropped, that is, 
\begin{equation} \label{e41}
 \min_{X^TX = I_q, X\geq 0} f(X). 
\end{equation}
The papers~\cite{JMWC2019,QPX2021} use different approaches to reformulate Problem~\eqref{e41}. The former~\cite{JMWC2019} develops a penalty method by penalizing the orthogonal constraints in $X$ and keeping the nonnegative and multiple sphere constraints, and the latter~\cite{QPX2021} keeps the orthonormal constraints and penalizes the nonnegative constraints.
If the feasible set $\mathcal{F}_v$ is a manifold, then Problem~\eqref{prob3} can be optimized by Riemannian proximal gradient methods in~\cite{HuaWei2019b,HuaWei2021}. However, it is not considered in~\cite{HuaWei2019b,HuaWei2021} if the set $\mathcal{F}_v$ in Problem~\eqref{prob3} is a manifold.

%\whcomm{}{
A related work given in~\cite{CMV2017} considers reformulating the $k$-means clustering problem as an optimization problem on the manifold $\mathcal{F}_{\mathbf{1}_n}$. Rather than promoting the sparsity, the authors propose to penalize the negativity of entries in $X$ and reformulate the $k$-mean clustering problem as
\[
\min_{X \in \mathcal{F}_{\mathbf{1}_n}} -\trace(X^T M X) + \lambda \|X_{-}\|_{\F}^2,
\]
where $X_{-}$ indicates the negative entries of $X$. Though the paper~\cite{CMV2017} states that $\mathcal{F}_{\mathbf{1}_n}$ is a manifold, a rigorous proof is not given. Moreover, we consider a more general case for $\mathcal{F}_{v}$ with a positive vector $v$. The retraction proposed in~\cite{CMV2017} involves a computation of an $n$-by-$n$ matrix exponential ($O(n^3)$ flops), which is unacceptable for large $n$. The retraction in the present paper (see Theorem~\ref{th01}) only takes $O(n p^2)$ flops and can be used for large scale problems\footnote{Throughout this paper, the computational complexity is measured by flop counts. A flop is a floating point operation \cite[Section 1.2.4]{GV96}.}. In addition, this paper gives an orthonormal basis of the normal space of $\mathcal{F}_v$ which allows a computationally efficient characterization for the normal space. The well-known Riemannian steepest descent method is used in~\cite{CMV2017} while we propose a more sophisticated and effective inexact Riemannian proximal gradient method for the nonsmooth cost function in~\eqref{prob3}.
%}

%\whcomm{}{
A preliminary version of this paper is given in~\cite{WHGV2021}, which focuses on the community detection problem~\cite{Newman2006} and therefore only considers $\mathcal{F}_{\mathbf{1}_n}$. In addition, the paper~\cite{WHGV2021} uses iterations of the existing Riemannian proximal gradient method in~\cite{CMSZ2019} on the Stiefel manifold for the problem
\[
\min_{x \in \St(q, n)} - \trace(X^T M X) + \lambda \|X\|_1,
\]
and projects every iterate $x_k$ onto $\mathcal{F}_{\mathbf{1}_n}$. Such a method, called Riemannian projected proximal gradient method, is neither guaranteed to generate descent iterates in the sense of the function value nor guaranteed to convergence globally. The geometric structure of the constrained set is not explored either.
%}

%If the matrix $M$ is given by the modularity matrix in graph analysis, then~\eqref{e09} is the objective function for community detection in~\cite{Newman2006}. In~\cite{WHGV2021}, an accelerated Riemannian projected proximal gradient method is developed and used for the community detection without convergence analysis. 

%\whcomm{}{
An inexact Riemannian proximal gradient method has been proposed in~\cite{HuaWei2021} which is based on the Riemannian proximal gradient method~\cite{HuaWei2019b}. The version in~\cite{HuaWei2021} is insightful from theoretical aspects, that is, theoretical conditions that guarantee local convergence rate are given. However, those results rely on a sufficiently large parameter $\tilde{L}$ in the Riemannian proximal mapping
\begin{equation} \label{e43}
\hat{\eta}_x \approx \argmin_{\eta \in \T_x \mathcal{M}} \trace(\grad f(x)^T \eta) + \frac{\tilde{L}}{2} \|\eta\|^2 + g(R_x(\eta)),
\end{equation}
and the use of a fixed step size, where $R_x$ denotes a retraction on $\mathcal{M}$. In practice, a sufficiently large $\tilde{L}$ is usually unknown and estimating $\tilde{L}$ requires extra work. % than preforming a backtracking used in the proposed algorithm of this paper. 
The inexact Riemannian proximal gradient method proposed in the present paper avoids this problem, allows adaptive step size, guarantees global convergence, and therefore is preferable from the point of view of computational efficiency and provable robustness.
%}

%Review nonnegative constraints + Stiefel manifold: \whcomm{TODO Review the optimization framework and method for clustering. One important formulation is nonnegative constraints on the Stiefel manifold. We already found two such papers: i) An exact penalty approach for optimization with nonnegative orthogonality constraints, ii) Two algorithms for orthogonal nonnegative matrix factorization with application to clustering. }{}

\subsection{Organization}
This paper is organized as follows. Section~\ref{sect:notation} gives the notation and preliminaries. Section~\ref{sect:ManifoldF} proves that the set $\mathcal{F}_v$ is a manifold and the optimization-related geometry tools are also derived. Section~\ref{sec:Alg} gives an inexact Riemannian proximal gradient method and its global convergence analysis. The numerical experiments are shown in Section~\ref{sect:NumExp}. Finally, the conclusion and future work are stated in Section~\ref{sect:con}.

\section{Notation and Preliminaries} \label{sect:notation} % on Riemannian optimization}

Unless otherwise indicated, the Riemannian concepts of this paper follow from the standard literature, e.g.,~\cite{Boo1986,AMS2008} and the related notation  follows from~\cite{AMS2008}. A Riemannian manifold $\mathcal{M}$ is a manifold endowed with a Riemannian metric $(\eta_x, \xi_x) \mapsto \inner[x]{\eta_x}{ \xi_x} \in \mathbb{R}$, where $\eta_x$ and $\xi_x$ are tangent vectors in the tangent space of $\mathcal{M}$ at $x$.  The induced norm in the tangent space at $x$ is denoted by $\|\cdot\|_x$. % or $\|\cdot\|$ when there is no risk of confusion. 
Throughout this paper, unless otherwise indicated, we use the Euclidean metric, i.e., $\inner[]{U}{V} = \mathrm{trace}(U^T V)$ and $\|U\| = \sqrt{\inner[]{U}{V}} = \|U\|_{\F}$. Therefore, the subscript of $\inner[]{\cdot}{\cdot}$ and $\|\cdot\|$ can be omitted. The tangent space of the manifold $\mathcal{M}$ at $x$ is denoted by $\T_x \mathcal{M}$, and the tangent bundle, which is the disjoint union of all tangent spaces, is denoted by $\T \mathcal{M}$. An open ball on a tangent space is denoted by $\mathbb{B}(x, r) = \{\xi_x \in \T_x \mathcal{M} \mid \|\xi_x\| < r \}$.
The Riemannian gradient of a function $h:\mathcal{M} \rightarrow \mathbb{R}$%is denoted by $\grad h(x)$, and
, denoted  $\grad h(x)$, is the unique tangent vector satisfying:
$%\begin{equation*}
\D h(x) [\eta_x] = \inner[x]{\eta_x}{\grad h(x)}, \forall \eta_x \in \T_x \mathcal{M},
$ %\end{equation*}
where $\D h(x) [\eta_x]$ denotes the directional derivative of $h$ along the direction $\eta_x$.

%A vector field is a function from the manifold to its tangent bundle, i.e., $\eta:\mathcal{M} \rightarrow \T \mathcal{M}: x\mapsto \eta_x$. An open ball on a tangent space is denoted by $\mathbb{B}_x(r) = \{\xi_x \in \T_x \mathcal{M} \mid \|\xi_x\| < r \}$. A linear operator $\mathcal{A}_x$ in a tangent space $\T_x \mathcal{M}$ is called self-adjoint or symmetric if it satisfies $g_x(\mathcal{A}_x \eta_x, \xi_x) = g_x(\eta_x, \mathcal{A}_x \xi_x)$ for all $\eta_x, \xi_x \in \T_x \mathcal{M}$.

%For example, the set $\St(q, n) = \{X \in \mathbb{R}^{n \times q} : X^T X = I_p\}$ forms a well-known manifold, named the Steifel manifold. Two commonly-used Riemannian metric on $\St(q, n)$ are the Euclidean metric $\inner[X]{\eta_X}{\xi_X}^E = \mathrm{tr}(\eta_X^T \xi_X)$ and the canonical metric 
%$\inner[X]{\eta_X}{\xi_X}^C = \mathrm{tr}(\eta_X^T (I_n - \frac{1}{2}X X^T) \xi_X)$ for any $\eta_X, \xi_X$ in the tangent space at $X$, i.e., $\eta_X, \xi_X \in \T_X \St(q, n) = \{X \Omega + X_{\perp} K : \Omega^T = - \Omega, K \in \mathbb{R}^{(n - q) \times q}\}$.

%Efficient optimization on a Riemannian manifold relies on two important concepts, retraction and vector transport.
A retraction, by definition, is a smooth ($C^\infty$) mapping from the tangent bundle to the manifold such that 
\begin{align}
&\hbox{(i)~$R(0_x) = x$ for all $x \in \mathcal{M}$, and} \label{e06} \\
&\hbox{(ii) $\frac{d}{d t} R(t \eta_x) \vert_{t = 0} = \eta_x$ for all $\eta_x \in \T_x \mathcal{M}$} \label{e07}
\end{align}
where $0_x$ denotes the origin of $\T_x \mathcal{M}$. Moreover, $R_x$ denotes the restriction of $R$ to $\T_x \mathcal{M}$.
The domain of $R$ does not need to be the entire tangent bundle. If the manifold $\mathcal{M}$ is a compact embedded submanifold of $\mathbb{R}^n$, then by~\cite{BAC2018}, there exist two positive constants $M_1$ and $M_2$ such that
\begin{align}
&\|R_x(\eta_x) - x\| \leq M_1 \|\eta_x\| \label{e26} \\
&\|R_x(\eta_x) - x - \eta_x\| \leq M_2 \|\eta_x\|^2, \label{e27}
\end{align}
hold for any $x \in \mathcal{M}$ and $\eta_x \in \T_x \mathcal{M}$.
%However, it is usually the case in practice, and in this paper we assume  $R$  is always well-defined.  For any $x \in \mathcal{M}$, there always exists a neighborhood of $0_x$ such that the mapping $R_x$ is a diffeomorphism in the neighborhood.
A vector transport $\mathcal{T}: \T \mathcal{M} \oplus \T \mathcal{M} \rightarrow \T \mathcal{M}: (\eta_x, \xi_x) \mapsto \mathcal{T}_{\eta_x} \xi_x$ associated with a retraction $R$ is a smooth mapping such that, for all $(x, \eta_x)$ in the domain of $R$ and all $\xi_x \in \T_x \mathcal{M}$, it holds that (i) $\mathcal{T}_{\eta_x} \xi_x \in \T_{R(\eta_x)} \mathcal{M}$, (ii) $\mathcal{T}_{0_x} = \mathrm{id}$, and (iii) $\mathcal{T}_{\eta_x}$ is a linear map, where $\id$ denotes the identity operator.
%An isometric vector transport $\mathcal{T}_{\mathrm{S}}$ additionally satisfies $\inner[R_x(\eta_x)]{\mathcal{T}_{\mathrm{S}_{\eta_x}} \xi_x}{\mathcal{T}_{\mathrm{S}_{\eta_x}} \zeta_x} = \inner[x]{\xi_x}{\zeta_x}$, for any $\eta_x, \xi_x, \zeta_x \in \T_x \mathcal{M}$.
The vector transport by differential retraction $\mathcal{T}_R$ is defined by $\mathcal{T}_{R_{\eta_x}} \xi_x = \frac{d}{d t} R_{x}(\eta_x + t \xi_x) \vert_{t = 0}$. 

The Stiefel manifold $\St(q, n)$ is defined by $\St(q, n) = \{X \in \mathbb{R}^{n \times q} : X^T X = I_p\}$. The tangent space of $\St(q, n)$ at $X$ is 
\begin{equation} \label{StTangent}
\T_X \St(q, n) = \{X \Omega + X_{\perp} K : \Omega^T = - \Omega, K \in \mathbb{R}^{(n - q) \times q} \},
\end{equation}
where $X_{\perp} \in \mathbb{R}^{n \times (n - q)}$ is a matrix with orthonormal columns such that $\begin{pmatrix} X & X_{\perp} \end{pmatrix}$ is orthonormal.

$0_n$ denotes a vector with length $n$ and all entries zero and $0_{m \times n}$ denotes a $m$-by-$n$ matrix with all entries zero. $I_s$ denotes the $s$-by-$s$ identity matrix. The subscript of $0$ or $I$ is omitted if its size is clear from the context. Given an $n$-by-$n$ matrix $M$, $e^M$ denotes the matrix exponential.

\section{Manifold Structure of $\mathcal{F}_v$} \label{sect:ManifoldF}

%\whcomm{[Shorten the materials that has been developed in~\cite{CMV2017}.]}{}

%\whcomm{}{
In this section, we prove that the set $\mathcal{F}_v$ forms an embedded submanifold of $\mathbb{R}^{n \times q}$ and derive optimization-related tools. 
%Note that it has been stated in~\cite{CMV2017} that $\mathcal{F}_{\mathbf{1}_n}$ is a manifold without giving proofs.
Theorem~\ref{thm:Feasible_manifold} show that $\mathcal{F}_v$ with $v > 0$ is an embedded submanifold of $\St(q, n)$. 
%}

% https://urldefense.com/v3/__https://math.stackexchange.com/questions/2942050/a-submanifold-of-a-submanifold-is-a-submanifold-is-the-ambient-space__;!!PhOWcWs!jLvGW_47BrxNN4Kubm1LJTMQhbmzyAlhTCSum6S60y8MWSDVA0u0dILsKcnFcVPF$ 

\begin{theorem}
\label{thm:Feasible_manifold}
The set $\mathcal{F}_v$ is an embedded submanifold of $\St(q, n)$ with dimension $\mathrm{dim}(\St(q, n)) - (n - q) = nq - q(q+1)/2 - n + q$. Furthermore, $\mathcal{F}_v$ is an embedded submanifold of $\mathbb{R}^{n \times q}$ with the same dimension and $\mathcal{F}_v$ is compact.
\end{theorem}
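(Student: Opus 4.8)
The plan is to realize $\mathcal{F}_v$ locally as the zero set of a submersion with $n-q$ scalar components, apply the regular value theorem on (an open subset of) $\St(q,n)$, and then read off the remaining statements. Note first that for $X\in\St(q,n)$ one has $v\in\mathrm{span}(X)\iff(I_n-XX^T)v=0$. The naive idea of using the single scalar map $X\mapsto\|v\|^2-\|X^Tv\|^2=v^T(I_n-XX^T)v\ge 0$, whose zero set is exactly $\mathcal{F}_v$, does not work: $\mathcal{F}_v$ is precisely the set of minimizers of this nonnegative function, so its differential vanishes identically on $\mathcal{F}_v$, and in any case one scalar equation would give codimension $1$ rather than $n-q$. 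Instead I fix $X_0\in\mathcal{F}_v$, set $Y_0=(X_0)_\perp\in\mathbb{R}^{n\times(n-q)}$ and $c_0=X_0^Tv$ (so that $v=X_0c_0$, $\|c_0\|=\|v\|>0$, and $Y_0^Tv=0$), and on a neighborhood $U$ of $X_0$ in $\St(q,n)$ I define the smooth map
\[
\Psi:U\to\mathbb{R}^{n-q},\qquad \Psi(X)=Y_0^TXX^Tv .
\]

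Two properties have to be verified. First, that after shrinking $U$ we have $\Psi(X)=0\iff v\in\mathrm{span}(X)$. The forward direction is immediate, since $XX^Tv=v$ and $Y_0^Tv=0$. For the converse, put $w=(I_n-XX^T)v\in\mathrm{span}(X)^\perp$; then $Y_0^Tw=Y_0^Tv-\Psi(X)=0$. Because the columns of $(I_n-XX^T)Y_0$ lie in $\mathrm{span}(X)^\perp$, depend continuously on $X$, and coincide with the (independent) columns of $Y_0$ at $X=X_0$, for $X$ near $X_0$ they form a basis of the $(n-q)$-dimensional space $\mathrm{span}(X)^\perp$; writing $w=(I_n-XX^T)Y_0\,a$ gives $0=Y_0^Tw=\big(Y_0^T(I_n-XX^T)Y_0\big)a$, and since the matrix $Y_0^T(I_n-XX^T)Y_0$ equals $I_{n-q}$ at $X_0$ it is invertible near $X_0$, forcing $a=0$ and hence $w=0$. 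Second, that $D\Psi(X_0)$ is surjective: for $\xi=X_0\Omega+Y_0K\in\T_{X_0}\St(q,n)$ (with $\Omega^T=-\Omega$) one computes $D\Psi(X_0)[\xi]=Y_0^T\xi\,X_0^Tv+Y_0^TX_0\,\xi^Tv=Kc_0$, which ranges over all of $\mathbb{R}^{n-q}$ as $K$ varies, because $c_0\neq 0$. By continuity of $X\mapsto D\Psi(X)$ and openness of surjectivity, we may further shrink $U$ so that $D\Psi(X)$ is surjective for every $X\in U$.

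The regular value theorem on $U$ then shows that $\mathcal{F}_v\cap U=\Psi^{-1}(0)$ is an embedded submanifold of $\St(q,n)$ of dimension $\dim\St(q,n)-(n-q)=nq-q(q+1)/2-n+q$; since being an embedded submanifold is a local condition and the above applies at every $X_0\in\mathcal{F}_v$, $\mathcal{F}_v$ is an embedded submanifold of $\St(q,n)$ of that dimension. Because $\St(q,n)$ is itself an embedded submanifold of $\mathbb{R}^{n\times q}$ and an embedded submanifold of an embedded submanifold is again one (see, e.g., \cite{AMS2008}), $\mathcal{F}_v$ is an embedded submanifold of $\mathbb{R}^{n\times q}$ of the same dimension. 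Finally, $\mathcal{F}_v=\St(q,n)\cap\{X\in\mathbb{R}^{n\times q}:(I_n-XX^T)v=0\}$ is an intersection of two closed sets and is bounded (every $X\in\St(q,n)$ has $\|X\|_{\F}=\sqrt q$), so $\mathcal{F}_v$ is compact.

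The main obstacle is the construction of the correct local defining function. The obvious global scalar constraint has degenerate differential and the wrong codimension, so one is forced to use the base-point–dependent vector map $\Psi(X)=Y_0^TXX^Tv$; the slightly delicate point is then property (a), namely that the vanishing of these $n-q$ equations is genuinely equivalent to $v\in\mathrm{span}(X)$ on a neighborhood, which relies on the perturbation argument that $Y_0^T$ remains injective on the moving orthogonal complement $\mathrm{span}(X)^\perp$. Everything else — the derivative computation, surjectivity, and the passage to $\mathbb{R}^{n\times q}$ and compactness — is routine.
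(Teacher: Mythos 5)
Your proof is correct, but it takes a genuinely different and more elementary route than the paper. The paper builds its local defining function by transporting the orthogonal complement along geodesics: it sets $\phi(Y)=\begin{pmatrix}X & X_\perp\end{pmatrix}e^{(\cdot)}\begin{pmatrix}0\\ I_{n-q}\end{pmatrix}$ via the exponential map of the canonical metric, defines $h(Y)=\phi(Y)^Tv$, and then verifies surjectivity of $\D h(Y)$ at every $Y$ in the neighborhood through a fairly heavy computation with differentiated exponentials and vector transports. You instead freeze the complement at the base point, take $\Psi(X)=Y_0^TXX^Tv$ with $Y_0=(X_0)_\perp$, and replace the geodesic machinery with two short linear-algebra arguments: the Gram-matrix perturbation argument showing that $Y_0^T$ stays injective on the moving space $\mathrm{span}(X)^\perp$ (so that $\Psi^{-1}(0)$ really is $\mathcal{F}_v$ locally), and the one-line derivative computation $\D\Psi(X_0)[X_0\Omega+Y_0K]=KX_0^Tv$. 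Both computations of the differential agree up to sign with the paper's formula $\D h(X)[U]=-KX^Tv$ in (3.4), so your defining function would equally well support the subsequent identification of $\T_X\mathcal{F}_v$ as the kernel in Theorem 3.2. What the paper's construction buys is surjectivity of the differential at every point of the neighborhood by an exact formula rather than by continuity (a stronger property than the definition of a local defining function requires, as the paper itself notes in a footnote); what your construction buys is a substantially shorter and more transparent argument. The dimension count, the passage from $\St(q,n)$ to $\mathbb{R}^{n\times q}$, and the compactness argument are the same in substance.
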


\begin{proof}
We verify that $\mathcal{F}_v$ is an embedded submanifold of $\St(q, n)$ by following~\cite[Definition~8.70]{boumal2020intromanifolds} using the notion of a local defining function. 
For any $X \in \mathcal{F}_v$, let $X_\perp$ be a matrix such that $\begin{pmatrix} X & X_\perp \end{pmatrix}^T \begin{pmatrix} X & X_\perp \end{pmatrix} = I_n$. Therefore, by~\cite[(2.23)]{EAS98}, we have that for any $V \in \T_X \St(q, n)$,
\[
\Exp_X(V) = 
\begin{pmatrix}
X & X_{\perp}
\end{pmatrix}
e^
{
\begin{pmatrix}
\Omega & -K^T \\
K & 0
\end{pmatrix}
}
\begin{pmatrix}
I_p \\
0
\end{pmatrix},
\]
defines the exponential mapping with respect to the canonical metric, where $\Exp$ denotes the matrix exponential,  $V = X \Omega + X_{\perp} K$, and the canonical metric is $\inner[X]{\eta_X}{\xi_X} = \mathrm{tr}(\eta_X^T (I_n - \frac{1}{2} X X^T) \xi_X)$ for $\eta_X, \xi_X \in \T_X \St(q, n)$. By~\cite[Theorem~3.7]{dC92}, there exists a positive constant $\delta > 0$ such that $\Exp_X$ is a diffeomorphism in $\mathbb{B}(X, \delta)$. It follows that for any $Y \in \Exp_X(\mathbb{B}(X, \delta))$, the mapping $\Exp_X^{-1}(Y)$ is well-defined and $Y = \Exp_X(\Exp_X^{-1}(Y))$, i.e.,
\begin{equation} \label{e01}
Y = 
\begin{pmatrix}
X & X_{\perp}
\end{pmatrix}
e^{
\begin{pmatrix}
X^T \Exp_X^{-1}(Y) & -\left( X_{\perp}^T \Exp_X^{-1}(Y) \right)^T \\
X_{\perp}^T \Exp_X^{-1}(Y) & 0
\end{pmatrix}
}
\begin{pmatrix}
I_q \\
0
\end{pmatrix}.
\end{equation}
Define a function $\phi: \Exp_X(\mathbb{B}(x, \delta)) \rightarrow \mathbb{R}^{n \times (n - q)}$ by
\begin{equation*}
\phi(Y) = 
\begin{pmatrix}
X & X_{\perp}
\end{pmatrix}
e^{
\begin{pmatrix}
X^T \Exp_X^{-1}(Y) & -\left( X_{\perp}^T \Exp_X^{-1}(Y) \right)^T \\
X_{\perp}^T \Exp_X^{-1}(Y) & 0
\end{pmatrix}
}
\begin{pmatrix}
0 \\
I_{n-q}
\end{pmatrix},
\end{equation*}
it follows from~\eqref{e01} that 
\begin{equation} \label{e02}
\phi(Y)^T Y = 0_{(n - q) \times q}.
\end{equation}
Since $\Exp_X^{-1}$ is smooth in $\Exp_X(\mathbb{B}(x, \delta))$, $\phi$ is a smooth function in its domain. Furthermore, it follows from $\phi(X)^T X_\perp = I_{n - q}$ that there exists a constant $\tilde{\delta} > 0$ such that $\phi(Z)^T X_{\perp}$ is full rank, i.e.,
\begin{equation} \label{e03}
\mathrm{rank} \left( \phi(Z)^T X_{\perp} \right) = n - q,
\end{equation}
for any $Z \in \Exp_X(\mathbb{B}(x, \tilde{\delta}))$. Let $\hat{\delta} = \min(\delta, \tilde\delta)$ and $\mathcal{N}_X = \Exp_X(\mathbb{B}(x, \hat\delta))$. We now define a function $h$ by
$
h:\mathcal{N}_X \rightarrow \mathbb{R}^{n - q}: Y \mapsto h(Y) = \phi(Y)^T v.	
$
Next, we verify that the function $h$ is a local defining function in the sense that $h^{-1}(0) = \mathcal{N}_X \cap \mathcal{F}_v$ and $\D h(Y): \T_Y \St(q, n) \rightarrow \mathbb{R}^{n - q}$ is surjective for any $Y \in \mathcal{N}_X$\footnote{Note that the local definition function only requires $\D h(Y)$ to be full rank at $Y = X$. Here, we prove a stronger result.}.

For any $Z \in h^{-1}(0)$, it holds that $\phi(Z)^T v = 0$ and $Z \in \St(q, n)$. Since $\phi(Z)^T v = 0$ implies $v \in \mathrm{span}(X)$, we have $Z \in \mathcal{F}_v$, which means $h^{-1}(0) \subseteq \mathcal{N}_X \cap \mathcal{F}_v$. On the other hand, for any $Z \in \mathcal{N}_X \cap \mathcal{F}_v$, it is obvious that $h(Z) = 0$, which means $\mathcal{N}_X \cap \mathcal{F}_v \subseteq h^{-1}(0)$. Overall, the equation $h^{-1}(0) = \mathcal{N}_X \cap \mathcal{F}_v$ holds.

Let $V$ denote $\Exp_X^{-1}(Y)$. For any $U \in \T_{\Exp_X(-V)} \St(q, n)$, let $\dot{V} = \mathcal{T}_{\Exp_{-V}}^{-1} (- U)$ and $W = \mathcal{T}_{\Exp_V} \dot{V}$, where $\mathcal{T}_{\Exp}$ denotes the vector transport by differentiating the exponential mapping~\eqref{e01}. Note that $\dot{V}$ is well-defined since $-V \in \mathbb{B}(X, \hat{\delta})$. We have
\begin{align*}
&\D h(Y) \left[ W \right] = \left( \D \phi(Y)\left[ W \right] \right)^T v \\
=& \left( 
\begin{pmatrix}
X & X_{\perp}	
\end{pmatrix}
\D e^{
\begin{pmatrix}
X^T \Exp_X^{-1}(Y) & -\left( X_{\perp}^T \Exp_X^{-1}(Y) \right)^T \\
X_{\perp}^T \Exp_X^{-1}(Y) & 0
\end{pmatrix}
}
\left[W\right]
\begin{pmatrix}
0 \\
I_{n-q}
\end{pmatrix}
\right)^T v \\
=&
\begin{pmatrix}
0 & I_{n-q}
\end{pmatrix}
\left\{
\D \left( 
e^{
\begin{pmatrix}
- X^T \Exp_X^{-1}(Y) & \left( X_{\perp}^T \Exp_X^{-1}(Y) \right)^T \\
- X_{\perp}^T \Exp_X^{-1}(Y) & 0
\end{pmatrix}
}
\begin{pmatrix}
I_p \\
0
\end{pmatrix}
\right)
\left[W\right]
\right\} \alpha,
\end{align*}
where $\alpha = X^T v$, $(e^A)^T = e^{-A}$ for any skew symmetric matrix $A$, and $X_{\perp} v = 0$.

Define the functions
\begin{gather*}
G(Y) = e^{
\begin{pmatrix}
- X^T \Exp_X^{-1}(Y) & \left( X_{\perp}^T \Exp_X^{-1}(Y) \right)^T \\
- X_{\perp}^T \Exp_X^{-1}(Y) & 0
\end{pmatrix}
}
\begin{pmatrix}
I_p \\
0
\end{pmatrix}, \hbox{ and } \\
H\begin{pmatrix}
\Omega \\
K
\end{pmatrix}
=
\Exp_X(X \Omega + X_\perp K).
\end{gather*}
Since $V, \dot{V} \in \T_X \St(q, n)$, there exist skew symmetric matrices $\Omega_V, \Omega_{\dot{V}}$ and matrices $K_V, K_{\dot{V}}$ such that $V = X \Omega_V + X_{\perp} K_V$ and $\dot{V} = X \Omega_{\dot{V}} + X_{\perp} K_{\dot{V}}$. By the chain rule $\D G\circ H \begin{pmatrix} \Omega_V \\ K_V \end{pmatrix} \left[ \begin{pmatrix} \Omega_{\dot{V}} \\ K_{\dot{V}} \end{pmatrix} \right] = \D G \left(H \begin{pmatrix} \Omega_V \\ K_V \end{pmatrix} \right) \left[ \D H \begin{pmatrix} \Omega_V \\ K_V \end{pmatrix} \left[ \begin{pmatrix} \Omega_{\dot{V}} \\ K_{\dot{V}} \end{pmatrix} \right] \right]$ and $W = \mathcal{T}_{\Exp_V} \dot{V}$, we have that
\begin{align*}
&\D e^{
\begin{pmatrix}
- X^T \Exp_X^{-1}(Y) & \left( X_{\perp}^T \Exp_X^{-1}(Y) \right)^T \\
- X_{\perp}^T \Exp_X^{-1}(Y) & 0
\end{pmatrix}
}
\begin{pmatrix}
I_p \\
0
\end{pmatrix}
\left[\mathcal{T}_{R_V} \dot{V}\right]
\\
=& \D \left(
e^{
\begin{pmatrix}
- \Omega_V & K_V^T \\
-K_V & 0
\end{pmatrix}
}
\begin{pmatrix}
I_p \\
0
\end{pmatrix}
\right)
\left[
\begin{pmatrix}
\Omega_{\dot{V}} \\
K_{\dot{V}}
\end{pmatrix}
\right].
\end{align*}
It follows that
\begin{align}
&\D h(Y) \left[ W \right] = 
-
\begin{pmatrix}
0 & I_{n-q}
\end{pmatrix}
\D \left(
e^{
\begin{pmatrix}
- \Omega_V & K_V^T \\
-K_V & 0
\end{pmatrix}
}
\begin{pmatrix}
I_p \\
0
\end{pmatrix}
\right)
\left[
\begin{pmatrix}
- \Omega_{\dot{V}} \\
- K_{\dot{V}}
\end{pmatrix}
\right]
\alpha \nonumber \\
=& \begin{pmatrix}
0 & I_{n-q}
\end{pmatrix}
\begin{pmatrix}
X & X_{\perp}
\end{pmatrix}^T
\mathcal{T}_{\Exp_{-V}} (\dot{V})
= \begin{pmatrix}
0 & I_{n-q}
\end{pmatrix}
\begin{pmatrix}
X & X_{\perp}
\end{pmatrix}^T
\mathcal{T}_{\Exp_{-V}} ( \mathcal{T}_{\Exp_{-V}}^{-1} (- U) )
 \nonumber \\
=& 
-
\begin{pmatrix}
0 & I_{n-q}
\end{pmatrix}
\begin{pmatrix}
X & X_{\perp}
\end{pmatrix}^T
U \alpha = - X_{\perp}^T U \alpha. \label{e04}
\end{align}
Since $U$ can be any tangent vector in $\T_{\Exp_X(-V)} \St(q, n)$ and $X_{\perp}^T \phi(\Exp_X(-V))$ is full rank by~\eqref{e03}, the vector $- X_{\perp}^T U \alpha$ can be any one in $\mathbb{R}^{n - q}$. Therefore, $\D h(Y)$ is surjective and also full rank. Therefore, by~\cite[Definition~8.70]{boumal2020intromanifolds}, $\mathcal{F}_v$ is an embedded submanifold of $\St(q, n)$. Furthermore, by~\cite[Exercise~3.33]{boumal2020intromanifolds}, $\mathcal{F}_v$ is also an embedded submanifold of $\mathbb{R}^{n \times q}$.

Since $\mathcal{F}_v$ is a subset of $\St(q, n)$, it is a bounded set. Moreover, it is easy to show that its complement set in $\mathbb{R}^{n \times q}$ is an open set. Therefore, $\mathcal{F}_v$ is a closed set. It follows that $\mathcal{F}_v$ is compact.
\end{proof}

%Throughout this paper, the Riemannnian metric of $\mathcal{F}$ is chosen to be the Euclidean metric. 
Theorem~\ref{th02} gives the tangent space at any $X \in \mathcal{F}_v$ and its perpendicular space with respect to the Euclidean metric.

\begin{theorem} \label{th02}
Let $\mathcal{F}_v$ be the embedded submanifold of $\mathbb{R}^{n \times q}$. The tangent space of $\mathcal{F}_v$ at $X$ is given by
\[
\T_X \mathcal{F}_v = \{X \Omega + X_\perp K : \Omega^T = - \Omega, K \in \mathbb{R}^{(n - q) \times q}, K X^T v = 0 \}
\]
and the perpendicular space of $\T_X \mathcal{F}_v$ with respect to the Euclidean metric, called the normal space at $X$, is given by
\[
\N_X \mathcal{F} = \{X S + X_{\perp} u v^T X : S = S^T, u \in \mathbb{R}^{n - q} \}.	
\]
\end{theorem}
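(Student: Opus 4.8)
The plan is to prove the tangent-space formula by a double inclusion and then obtain the normal space as its Euclidean orthogonal complement. For the inclusion $\T_X\mathcal{F}_v \subseteq \{X\Omega + X_\perp K : \Omega^T = -\Omega,\ KX^Tv = 0\}$, I would take a smooth curve $\gamma(t)$ in $\mathcal{F}_v$ with $\gamma(0) = X$. From $\gamma(t)^T\gamma(t) = I_q$ we get $\dot\gamma(0) \in \T_X\St(q,n)$, so $\dot\gamma(0) = X\Omega + X_\perp K$ with $\Omega$ skew-symmetric. The constraint $v \in \mathrm{span}(\gamma(t))$ means $v = \gamma(t)c(t)$ for a smooth curve $c(t)$ in $\mathbb{R}^q$ (one may take $c(t) = \gamma(t)^Tv$, which equals the projection of $v$ onto $\mathrm{span}(\gamma(t))$ and hence $v$ itself); differentiating at $t=0$ gives $0 = \dot\gamma(0)c(0) + X\dot c(0)$ with $c(0) = X^Tv$. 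Multiplying by $X_\perp^T$ and using $X_\perp^TX = 0$ together with $X_\perp^T\dot\gamma(0) = K$ yields $KX^Tv = 0$. Equivalently, since $\mathcal{F}_v$ is cut out near $X$ by the local defining function $h$ of Theorem~\ref{thm:Feasible_manifold}, one has $\T_X\mathcal{F}_v = \ker \D h(X)$, and specializing \eqref{e04} to $Y = X$ (where $V=0$, $W=-U$) gives $\D h(X)[W] = X_\perp^T W X^Tv$, whose kernel inside $\T_X\St(q,n)$ is exactly the claimed set.

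For the reverse inclusion I would compare dimensions rather than construct curves. The vector $a := X^Tv$ is nonzero: since $v \neq 0$ and $v \in \mathrm{span}(X)$, we have $v = Xc$ with $c \neq 0$, hence $a = X^TXc = c \neq 0$. Therefore the $n-q$ scalar equations encoded in $KX^Tv = 0$ (one per row of $K$) are linearly independent, so the admissible $K$ form a space of dimension $(n-q)(q-1)$, and the candidate tangent space has dimension $q(q-1)/2 + (n-q)(q-1)$. A short expansion shows this equals $nq - q(q+1)/2 - (n-q)$, the dimension of $\mathcal{F}_v$ from Theorem~\ref{thm:Feasible_manifold}. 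Since $\T_X\mathcal{F}_v$ is contained in the candidate set and both have the same finite dimension, they coincide.

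For the normal space, I would work inside $\T_X\St(q,n) = \{X\Omega + X_\perp K : \Omega^T = -\Omega\}$, on which the Euclidean metric splits as $\inner[]{X\Omega + X_\perp K}{X\Omega' + X_\perp K'} = \trace(\Omega^T\Omega') + \trace(K^TK')$ because $X^TX_\perp = 0$. Hence the orthogonal complement of $\T_X\mathcal{F}_v$ inside $\T_X\St(q,n)$ must have zero $\Omega$-part and a $K$-part orthogonal, in the trace inner product, to every $K'$ with $K'a = 0$; since the latter are precisely the matrices whose rows are orthogonal to $a$, their complement is $\{ua^T : u \in \mathbb{R}^{n-q}\} = \{X_\perp u v^TX : u \in \mathbb{R}^{n-q}\}$. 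Adding the standard Euclidean normal space of the Stiefel manifold, $\N_X\St(q,n) = \{XS : S = S^T\}$, which is orthogonal to all of $\T_X\St(q,n)$ and hence to $\T_X\mathcal{F}_v$, shows that $\{XS + X_\perp u v^TX : S = S^T,\ u \in \mathbb{R}^{n-q}\}$ is orthogonal to $\T_X\mathcal{F}_v$. Finally, the map $(S,u) \mapsto XS + X_\perp u v^TX$ is injective — multiply by $X^T$ to recover $S$, then by $X_\perp^T$ and use $v^TX \neq 0$ to recover $u$ — so this space has dimension $q(q+1)/2 + (n-q) = nq - \dim\mathcal{F}_v$, exactly the codimension of $\mathcal{F}_v$; therefore it equals $\N_X\mathcal{F}_v$.

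The computations are all routine; the only steps demanding care are the observation that $X^Tv \neq 0$ (needed both to count the constraints $KX^Tv = 0$ and to get injectivity in the normal-space argument) and the two dimension tallies, which must be checked against the formula for $\dim\mathcal{F}_v$ established in Theorem~\ref{thm:Feasible_manifold}. That bookkeeping is the main, and essentially the only, obstacle.
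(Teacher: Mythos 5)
Your proof is correct, and it differs from the paper's in a way worth noting. For the tangent space, the paper simply invokes $\T_X\mathcal{F}_v = \ker \D h(X)$ and reads the answer off from the formula $\D h(X)[U] = -KX^Tv$ already established in \eqref{e04} during the proof of Theorem~\ref{thm:Feasible_manifold}; your primary route is more elementary and self-contained: differentiate the constraint $v = \gamma(t)\gamma(t)^Tv$ along curves to get the inclusion $\T_X\mathcal{F}_v \subseteq \{X\Omega + X_\perp K : \Omega^T=-\Omega,\ KX^Tv=0\}$, then close the gap with the dimension count against Theorem~\ref{thm:Feasible_manifold} (and you correctly flag $X^Tv \neq 0$ as the point that makes the $n-q$ constraints on $K$ independent). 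You also record the paper's route as an ``equivalently,'' so nothing is lost. For the normal space, both arguments ultimately rest on orthogonality plus a dimension count, but the paper merely \emph{verifies} that the stated set is orthogonal to $\T_X\mathcal{F}_v$ and has complementary dimension, whereas you \emph{derive} its form: split off $\N_X\St(q,n) = \{XS : S=S^T\}$, and identify $\{ua^T : u\in\mathbb{R}^{n-q}\}$ as the trace-inner-product complement of $\{K : Ka=0\}$ inside $\mathbb{R}^{(n-q)\times q}$, which explains where the $X_\perp u v^T X$ term comes from. Your version is slightly longer but more transparent; the paper's is shorter because \eqref{e04} is already available. All of your dimension tallies check out against $\dim\mathcal{F}_v = nq - q(q+1)/2 - (n-q)$.
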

\begin{proof}
It follows from~\cite[Exercise~3.33]{boumal2020intromanifolds} that $\T_X \mathcal{F}_v = \mathrm{ker} \D h(X)$. By~\eqref{e04}, we have that for any $U \in \T_X \St(q, n)$, $\D h(X) [U] = - K X^T v$, where $U = X \Omega + X_{\perp} K$ and $\Omega$ is any skew symmetric matrix and $K$ is any $n$-by-$(n-q)$ matrix. Therefore, it holds that $\mathrm{ker} \D h(X) = \{X \Omega + X_\perp K : \Omega^T = - \Omega, K \in \mathbb{R}^{(n - q) \times q}, K X^T v = 0 \}$.

For any $V \in \T_X \mathcal{F}_v$ and $U \in N_X \mathcal{F}_v$, it is easy to verify that $\mathrm{trace}(U^T V) = 0$. In addition, $\mathrm{dim}(T_X \mathcal{F}_v) + \mathrm{dim}(\N_X \mathcal{F}_v) = nq - n - q - q (q + 1) / 2 + q (q + 1) / 2 + n + q = nq = \mathrm{dim}(\mathbb{R}^{ n \times q })$. Therefore, $N_X \mathcal{F}_v = \left( \T_X \mathcal{F}_v \right)^{\perp}$ which implies $\N_X \mathcal{F}_v$ is the normal space of $\mathcal{F}_v$ at $X$.
\end{proof}

\begin{theorem}
Given any $Z \in \mathbb{R}^{n \times q}$, the orthogonal projection to $\N_X \mathcal{F}_v$ is given by
\[
P_{\N_X} (Z) = X \frac{X^T Z + Z^T X}{2} + (I - X X^T) Z \hat{\alpha} \hat{\alpha}^T,
\]
where $\hat{\alpha} = X^T v / \|X^T v\|$. The orthogonal projection to $\T_X \mathcal{F}_v$ is therefore
\[
P_{\T_X} (Z) = X \frac{X^T Z - Z^T X}{2} + (I - X X^T) Z (I - \hat{\alpha} \hat{\alpha}^T).
\]
\end{theorem}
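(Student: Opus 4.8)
The plan is to reduce the computation to two decoupled least-squares problems, using the explicit description of the normal space from Theorem~\ref{th02}. First note that since $X \in \mathcal{F}_v$ we have $v \in \mathrm{span}(X)$, hence $v = X X^T v$ and $\alpha := X^T v \neq 0$ (as $v$ is a nonzero positive vector), so $\hat\alpha = \alpha/\|\alpha\|$ is well defined. Fix $X_\perp$ so that $\begin{pmatrix} X & X_\perp \end{pmatrix}$ is orthogonal, and decompose an arbitrary $Z \in \mathbb{R}^{n\times q}$ as $Z = X A + X_\perp B$ with $A = X^T Z \in \mathbb{R}^{q\times q}$ and $B = X_\perp^T Z \in \mathbb{R}^{(n-q)\times q}$; by Theorem~\ref{th02} a generic element of $\N_X\mathcal{F}_v$ is $X S + X_\perp u \alpha^T$ with $S = S^T$ and $u \in \mathbb{R}^{n-q}$ (using $v^T X = \alpha^T$). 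Because the columns of $X$ and $X_\perp$ are orthonormal and mutually orthogonal, the Pythagorean identity gives
\[
\| Z - (X S + X_\perp u \alpha^T) \|_\F^2 = \| A - S \|_\F^2 + \| B - u\alpha^T \|_\F^2 ,
\]
so minimizing over the normal space splits into minimizing $\|A-S\|_\F^2$ over symmetric $S$ and minimizing $\|B - u\alpha^T\|_\F^2$ over $u$.

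For the first subproblem the minimizer is the symmetric part $S_\star = (A + A^T)/2 = (X^T Z + Z^T X)/2$. For the second, expanding $\|B - u\alpha^T\|_\F^2 = \|B\|_\F^2 - 2 u^T B \alpha + \|\alpha\|^2 \|u\|^2$ and setting the gradient in $u$ to zero yields $u_\star = B\alpha / \|\alpha\|^2$, whence $X_\perp u_\star \alpha^T = X_\perp B\,\alpha\alpha^T/\|\alpha\|^2 = (I - X X^T) Z\,\hat\alpha\hat\alpha^T$. Adding $X S_\star$ and $X_\perp u_\star \alpha^T$ gives exactly the stated formula for $P_{\N_X}(Z)$.

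As an alternative to the least-squares argument, one can verify the formula directly by two observations: that $P_{\N_X}(Z) \in \N_X\mathcal{F}_v$, taking $S = S_\star$ and $u = X_\perp^T Z\alpha/\|\alpha\|^2$ in the parametrization of Theorem~\ref{th02} and using $\hat\alpha\hat\alpha^T = X^T v v^T X/\|\alpha\|^2$; and that $Z - P_{\N_X}(Z)$ is orthogonal to every $X S + X_\perp u \alpha^T$, the two surviving cross terms vanishing because a symmetric matrix is $\F$-orthogonal to the skew matrix $(A - A^T)/2$ and because $(I - \hat\alpha\hat\alpha^T)\alpha = 0$. Either route establishes that $P_{\N_X}$ is the orthogonal projection onto $\N_X\mathcal{F}_v$.

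Finally, Theorem~\ref{th02} shows $\N_X\mathcal{F}_v = (\T_X\mathcal{F}_v)^\perp$ inside $\mathbb{R}^{n\times q}$, so $P_{\T_X} = \id - P_{\N_X}$. Subtracting, $Z - P_{\N_X}(Z) = X(X^TZ - Z^TX)/2 + (I - XX^T)Z(I - \hat\alpha\hat\alpha^T)$, which is the claimed expression for $P_{\T_X}(Z)$. No step here is genuinely hard; the only points requiring care are the normalization hidden in $\hat\alpha$ (and the remark that $X^Tv\neq 0$) and keeping the $X^Tv$ versus $v^TX$ bookkeeping consistent throughout.
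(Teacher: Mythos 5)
Your proof is correct, but it takes a genuinely different route from the paper's. The paper's argument is a three-line verification: it observes that the two stated formulas lie in $\N_X \mathcal{F}_v$ and $\T_X \mathcal{F}_v$ respectively (``by observing the formats''), notes that they sum to $X X^T Z + (I - X X^T) Z = Z$, and concludes from the orthogonal-complement relation $\N_X \mathcal{F}_v = (\T_X \mathcal{F}_v)^\perp$ established in Theorem~\ref{th02} that the decomposition is the orthogonal one. You instead \emph{derive} the formula by parametrizing the normal space as $\{X S + X_\perp u \alpha^T : S = S^T,\ u \in \mathbb{R}^{n-q}\}$, splitting $\|Z - (X S + X_\perp u \alpha^T)\|_\F^2$ into two decoupled least-squares problems via the Pythagorean identity, and solving each one ($S_\star$ the symmetric part of $X^T Z$, $u_\star = X_\perp^T Z \alpha / \|\alpha\|^2$). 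Your computations check out, including the identity $X_\perp u_\star \alpha^T = (I - X X^T) Z \hat\alpha \hat\alpha^T$ and the subtraction giving $P_{\T_X}$. What your approach buys is that the formula is constructed rather than pulled out of thin air, and it makes explicit the membership verification that the paper's phrase ``by observing the formats'' glosses over (in particular, that $(I - XX^T) Z \hat\alpha\hat\alpha^T$ really is of the form $X_\perp u\, v^T X$). What the paper's approach buys is brevity: once the candidate formulas are written down, orthogonality of the two subspaces plus the sum-to-$Z$ identity does all the work, with no optimization needed. Your second, ``alternative'' paragraph is essentially the paper's argument fleshed out with the residual-orthogonality check made explicit, so you have in effect given both proofs.
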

\begin{proof}
By observing the formats of $P_{\N_X} (Z)$ and $P_{\T_X}(Z)$, we have $P_{\N_X} (Z) \in \N_X \mathcal{F}_v$ and $P_{\T_X}(Z) \in \T_X \mathcal{F}_V$. Therefore, the result follows from
$
P_{\N_X} (Z) + P_{\T_X}(Z) = X X^T Z + (I - X X^T) Z = Z.
$
\end{proof}

Given $X \in \St(q, n)$, the orthonormal projection from $X$ to $\mathcal{F}_v$ with $v = \mathbf{1}_n$ has been derived in~\cite{WHGV2021}. The orthonormal projection with any $v > 0$ can be derived similarly. We state the result without proof in Lemma~\ref{le01}.
\begin{lemma} \label{le01}
For any $X \in \St(q, n)$ with $X^T v \neq 0$, the global minimizer of the problem
$
P_{\mathcal{F}_v}(X) = \argmin_{Y \in \mathcal{F}_v} \|X - Y\|_F^2	
$
is given by $Y_* = v q_*^T / \|v\|_2 + X (I - q_* q_*^T)$,
where $q_* = X^T v / \|X^T v\|_2$.
\end{lemma}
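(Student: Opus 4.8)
The plan is to convert this least-squares problem into a linear maximization over $\mathcal{F}_v$ and then bound the objective by two elementary inequalities that turn out to be sharp simultaneously at the claimed $Y_*$. Since $X$ and every $Y \in \mathcal{F}_v$ lie on the Stiefel manifold, $\|X - Y\|_F^2 = 2q - 2\trace(X^T Y)$, so it suffices to show that $Y_*$ maximizes $\trace(X^T Y)$ over $\mathcal{F}_v$. First I would restate membership: writing $w = v/\|v\|_2$, one has $Y \in \mathcal{F}_v$ iff $Y \in \St(q,n)$ and $Yp = w$ for some $p \in \mathbb{R}^q$; any such $p$ must equal $Y^T v/\|v\|_2$ (multiply $Yc = v$ by $Y^T$ and use $Y^T Y = I_q$) and satisfies $\|p\|_2 = \|Yp\|_2 = 1$. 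Note also that $X^T v \neq 0$ is equivalent to $X^T w \neq 0$, so $q_* := X^T w/\|X^T w\|_2 = X^T v/\|X^T v\|_2$ is well defined and $v q_*^T/\|v\|_2 = w q_*^T$, which reconciles the notation with the statement.

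For a fixed $Y \in \mathcal{F}_v$ I would then split $Y = w p^T + Z$ with $Z := Y(I_q - p p^T)$, so that $\trace(X^T Y) = p^T(X^T w) + \trace(X^T Z)$. The first term is at most $\|X^T w\|_2$ by Cauchy--Schwarz (as $\|p\|_2 = 1$), with equality iff $p = q_*$. For the second, $Z^T Z = (I_q - p p^T) Y^T Y (I_q - p p^T) = I_q - p p^T$ is an orthogonal projector of rank $q-1$; hence a thin singular value decomposition of $Z$ has all singular values equal to $1$, with right singular vectors $r_1,\dots,r_{q-1}$ an orthonormal basis of the orthogonal complement of $p$ in $\mathbb{R}^q$ and left singular vectors $u_1,\dots,u_{q-1}$ orthonormal in $\mathbb{R}^n$. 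Then $\trace(X^T Z) = \sum_{i=1}^{q-1} u_i^T X r_i \le \sum_{i=1}^{q-1} \|X r_i\|_2 = q-1$, using $\|u_i\|_2 = 1$ and $\|X r_i\|_2^2 = r_i^T X^T X r_i = 1$. Combining the two bounds, $\trace(X^T Y) \le \|X^T w\|_2 + q - 1$ for every $Y \in \mathcal{F}_v$.

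It remains to verify that $Y_* = w q_*^T + X(I_q - q_* q_*^T)$ belongs to $\mathcal{F}_v$ and attains this value: using $w^T X = \|X^T w\|_2\, q_*^T$ and $X^T X = I_q$, a short computation gives $Y_*^T Y_* = I_q$ and $Y_* q_* = w$, so $Y_* \in \mathcal{F}_v$, while $\trace(X^T Y_*) = q_*^T X^T w + \trace(I_q - q_* q_*^T) = \|X^T w\|_2 + q - 1$. Hence $Y_*$ is a global maximizer of $\trace(X^T \cdot)$ on $\mathcal{F}_v$, equivalently a global minimizer of $\|X - \cdot\|_F^2$; chasing the equality cases (which force $p = q_*$ and then $Z = \sum_i X r_i r_i^T = X(I_q - q_* q_*^T)$) even shows it is the unique minimizer. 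The only step that is not pure bookkeeping is the second inequality together with the need for both bounds to be met at the same $Y$; exhibiting $Y_*$ explicitly settles the latter, so nothing delicate is required, consistent with the remark that this mirrors the $v = \mathbf{1}_n$ case treated in~\cite{WHGV2021}.
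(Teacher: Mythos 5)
Your argument is correct. Note that the paper itself states Lemma~\ref{le01} \emph{without} proof, deferring to the derivation of the $v=\mathbf{1}_n$ case in~\cite{WHGV2021}, so there is no in-paper proof to compare against; your write-up is a valid, self-contained justification. The reduction $\|X-Y\|_F^2 = 2q - 2\trace(X^TY)$ is the standard first step, and your two-part bound is sound: the characterization $Y\in\mathcal{F}_v \Leftrightarrow Yp = v/\|v\|_2$ with $p = Y^Tv/\|v\|_2$, $\|p\|_2=1$ is right; the split $Y = wp^T + Y(I_q-pp^T)$ gives $\trace(X^TY) = p^TX^Tw + \trace(X^TZ)$ with $Z^TZ = I_q - pp^T$ a rank-$(q-1)$ projector, and the Cauchy--Schwarz bounds $p^TX^Tw \le \|X^Tw\|_2$ and $\trace(X^TZ)\le q-1$ are both tight exactly at $Y_*$, whose membership in $\mathcal{F}_v$ and objective value you verify directly. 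The equality-case analysis even yields uniqueness of the minimizer (under the hypothesis $X^Tv\neq 0$), which is slightly more than the lemma claims. One tiny remark: for $q=1$ the second sum is empty and the bound degenerates gracefully, so no separate treatment is needed. Nothing is missing.
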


One way to define a retraction on $\mathcal{F}_v$ is by the orthogonal projection~\cite{AbsMal2012}, i.e.,
\begin{equation} \label{e05}
R_X^{\mathrm{proj}}(V) = P_{\mathcal{F}_v}(X + V),	
\end{equation}
where $X \in \mathcal{F}$ and $V \in \T_X \mathcal{F}$. However, we do not have a closed form solution of $P_{\mathcal{F}_v}(X + V)$ in general. A practical family of retractions is given in Theorem~\ref{th01}.

\begin{theorem} \label{th01}
For any $X \in \mathcal{F}_v$, there exists a positive number $\delta_X > 0$ such that the mapping
\begin{equation} \label{e08}
R_X: \mathbb{B}(x, \delta_X) \rightarrow \mathcal{F}_v: V \mapsto R_X(V) = P_{\mathcal{F}_v} \circ \tilde{R}_X
\end{equation}
satisfies the two conditions of the retraction, i.e.,~\eqref{e06} and~\eqref{e07}, where $\tilde{R}$ is any retraction on $\St(q, n)$. Moreover, if the retraction $\tilde{R}$ satisfies $\mathrm{span}(\tilde{R}_X(V)) = \mathrm{span}(X + V)$, then the domain of $\tilde{R}$ in~\eqref{e08} is the whole tangent bundle. Such retractions $\tilde{R}$ include the retraction by QR decomposition~\cite[(4.8)]{AMS2008} and the retraction by polar decomposition~\cite[(4.7)]{AMS2008}.
\end{theorem}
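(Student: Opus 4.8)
The plan is to verify the two retraction axioms \eqref{e06} and \eqref{e07} directly for $R_X = P_{\mathcal{F}_v}\circ\tilde R_X$, using the fact (Theorem~\ref{thm:Feasible_manifold}) that $\mathcal{F}_v$ is a compact embedded submanifold of $\mathbb{R}^{n\times q}$ and the explicit form of $P_{\mathcal{F}_v}$ from Lemma~\ref{le01}. For the centering condition~\eqref{e06}, since $\tilde R_X(0_X)=X\in\mathcal{F}_v$ and $P_{\mathcal{F}_v}$ restricted to $\mathcal{F}_v$ is the identity, we get $R_X(0_X)=P_{\mathcal{F}_v}(X)=X$. For the local velocity condition~\eqref{e07}, I would argue that $P_{\mathcal{F}_v}$ is smooth in a neighborhood of $X$: from Lemma~\ref{le01}, $P_{\mathcal{F}_v}(Z)$ is a rational (hence smooth) function of $Z$ wherever $Z^Tv\neq 0$, and since $X^Tv\neq 0$ (because $v\in\operatorname{span}(X)$ and $v\neq 0$), there is a $\delta_X>0$ so that $\tilde R_X(\mathbb{B}(x,\delta_X))$ lies in the region where $Z^Tv\neq 0$. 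On that neighborhood $R_X$ is a composition of smooth maps, hence smooth, and $\mathcal{F}_v$ being embedded means smoothness into $\mathbb{R}^{n\times q}$ with image in $\mathcal{F}_v$ gives smoothness into $\mathcal{F}_v$.

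Next I would compute $\frac{d}{dt}R_X(t\eta_X)|_{t=0}$ by the chain rule: it equals $\D P_{\mathcal{F}_v}(X)\big[\frac{d}{dt}\tilde R_X(t\eta_X)|_{t=0}\big] = \D P_{\mathcal{F}_v}(X)[\eta_X]$, where the last equality uses that $\tilde R$ is a retraction on $\St(q,n)$. Since $P_{\mathcal{F}_v}$ is the identity on $\mathcal{F}_v$ and is the orthogonal projection onto $\mathcal{F}_v$, its differential at a point $X\in\mathcal{F}_v$ restricted to $\T_X\mathcal{F}_v$ is the identity on $\T_X\mathcal{F}_v$; as $\eta_X\in\T_X\mathcal{F}_v$, we obtain $\D P_{\mathcal{F}_v}(X)[\eta_X]=\eta_X$, which is exactly~\eqref{e07}. (Alternatively, one can differentiate the closed form $Y_* = vq_*^T/\|v\|_2 + X(I-q_*q_*^T)$ with $q_*=Z^Tv/\|Z^Tv\|_2$ at $Z=X$ along $\eta_X$ and check directly that the terms involving $v$ cancel because $K X^Tv=0$ for $\eta_X=X\Omega+X_\perp K\in\T_X\mathcal{F}_v$; this is the routine calculation I would not grind through.)

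For the second claim, suppose $\tilde R$ satisfies $\operatorname{span}(\tilde R_X(V))=\operatorname{span}(X+V)$. I would show that then $P_{\mathcal{F}_v}\circ\tilde R_X(V)$ is well-defined for every $V\in\T_X\mathcal{F}_v$, i.e., the quantity $\tilde R_X(V)^Tv$ required by Lemma~\ref{le01} is never zero. Indeed, $v\in\operatorname{span}(X)\subseteq\operatorname{span}(X,V)=\operatorname{span}(\tilde R_X(V))$ — wait, more carefully: $v\in\operatorname{span}(X)$ and $\operatorname{span}(X)\subseteq\operatorname{span}(X+V)$ is false in general, so instead I note $v\in\operatorname{span}(X)$ implies $v=Xc$ for some $c\neq 0$; since $X$ has orthonormal columns and $v\neq 0$, $X^Tv=c\neq 0$, and the key point is that $v$ lies in the column space of $\tilde R_X(V)$ by hypothesis, so $\tilde R_X(V)$ has orthonormal columns with $v$ in their span, forcing $\tilde R_X(V)^Tv\neq 0$. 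Hence Lemma~\ref{le01} applies on all of $\T_X\mathcal{F}_v$, the domain extends to the whole tangent bundle, and the axioms hold there by the same computation. Finally I would recall that the QR and polar retractions both produce $\tilde R_X(V)$ whose column space equals $\operatorname{span}(X+V)$ (for polar, $\tilde R_X(V)=(X+V)((X+V)^T(X+V))^{-1/2}$ manifestly has the same column space; for QR, $\tilde R_X(V)=\operatorname{qf}(X+V)$ is the $Q$-factor, whose columns span $\operatorname{span}(X+V)$ since $X+V$ has full column rank for $V$ in the tangent space), completing the proof.

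The main obstacle is the smoothness and differential computation for $P_{\mathcal{F}_v}$ at points of $\mathcal{F}_v$: one must be careful that Lemma~\ref{le01}'s formula is genuinely smooth near $X$ (which hinges on $X^Tv\neq 0$ staying bounded away from zero, using compactness or continuity of $\tilde R_X$ near $0_X$), and that the differential of the orthogonal projection onto an embedded submanifold, restricted to the tangent space at the base point, is the identity — this last fact is standard but worth stating as the conceptual crux rather than re-deriving it coordinatewise.
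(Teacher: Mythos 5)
Your verification of the first claim is essentially the paper's own argument: well-definedness of $P_{\mathcal{F}_v}\circ\tilde R_X$ near $0_X$ from $X^Tv\neq 0$ and continuity, the chain rule reducing \eqref{e07} to $\D P_{\mathcal{F}_v}(X)[\eta_X]=\eta_X$, and then the standard fact that the differential of the metric projection restricted to $\T_X\mathcal{F}_v$ is the identity (the paper packages this same fact as ``the projection retraction \eqref{e05} is a retraction,'' citing \cite{AbsMal2012}). That part is fine.

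The second claim is where your argument has a genuine gap. You assert that ``$v$ lies in the column space of $\tilde R_X(V)$ by hypothesis.'' The hypothesis only says $\mathrm{span}(\tilde R_X(V))=\mathrm{span}(X+V)$, and $v\in\mathrm{span}(X+V)$ is \emph{false} in general for $V\in\T_X\mathcal{F}_v$. Writing $V=X\Omega+X_\perp K$ with $\Omega$ skew, $K\alpha=0$, $\alpha:=X^Tv$, membership $v\in\mathrm{span}(X+V)$ would force $K(I+\Omega)^{-1}\alpha=0$, which does not follow from $K\alpha=0$; e.g.\ with $n=3$, $q=2$, $v=(1,1,1)^T$, $X=\bigl(v/\|v\|\;\; u\bigr)$ for $u\perp v$, $\Omega=\bigl(\begin{smallmatrix}0&-1\\ 1&0\end{smallmatrix}\bigr)$ and $K=(0\;\,1)$, one checks $v\notin\mathrm{span}(X+V)$. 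What you actually need — and what is true — is only that $v$ is not \emph{orthogonal} to $\mathrm{span}(X+V)$, i.e.\ $(X+V)^Tv\neq 0$. This is the real content of the second part and is exactly the computation the paper performs: $(X+V)^Tv=(I-\Omega)\alpha$ (using $X_\perp^Tv=0$), hence $\alpha^T(X+V)^Tv=\|\alpha\|^2-\alpha^T\Omega\alpha=\|\alpha\|^2>0$. Combined with the span equality (so that $\tilde R_X(V)^Tv=S^T(X+V)^Tv$ for some invertible $S$), this gives $\tilde R_X(V)^Tv\neq 0$ and hence well-definedness of $P_{\mathcal{F}_v}$ on the whole tangent bundle. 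Your remarks on the QR and polar retractions preserving the column space are correct once this step is repaired.
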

\begin{proof}
Since $X$ satisfies $X^T v \neq 0$, $\tilde{R}_X(0_X) = X$, and $\tilde{R}_X$ is smooth, there exists a positive $\delta_X>0$ such that $P_{\mathcal{F}_v} (\tilde{R}_X(V))$ is well-defined for any $V \in \mathbb{B}(X, \delta_X)$.
The smoothness of $R$ follows from the smoothness of $\tilde{R}$ and $P_{\mathcal{F}_v}$. We have $R_X(0_X) = P_{\mathcal{F}_v}(\tilde{R}_X(0_X)) = P_{\mathcal{F}_v}(X) = X$, where the second equality follows from the property of the retraction $\tilde{R}$ and the last equation follows from the definition of the projection $P_{\mathcal{F}_v}$.

In addition, we have
\begin{align*}
\frac{d}{d t} R_X(t V) \vert_{t=0} =& \frac{d}{d t} \left( P_{\mathcal{F}_v} \circ \tilde{R}_X \right) (t V) \vert_{t = 0} = \left( \D P_{\mathcal{F}_v} (\tilde{R}_X(t V) ) \left[ \frac{d}{d t} \tilde{R}_X(t V) \right] \right) \vert_{t = 0} \\
=& \D P_{\mathcal{F}_v} (X) [V] = \D R_X^{\mathrm{proj}}(0_X)[V] = V,
\end{align*}
where the second equality follows from the chain rule, the third equality follows from $\tilde{R}_X(0_V) = X$ and $\frac{d}{d t} \tilde{R}_X(t V) \vert_{t = 0} = V$, and the last equality follows from the fact that~\eqref{e05} is a retraction.

For the second part of the result, we only need to verify that $(X + V)^T v \neq 0$ for all $V \in \T_X \mathcal{F}_v$. Let $\alpha = X^T v \neq 0$. By the form of the tangent space $\T_X \mathcal{F}_v$ in Theorem~\ref{th02}, we have
$
\alpha^T (X + V)^T v = \alpha^T (X + X \Omega + X_{\perp} K)^T v = \alpha^T \alpha + \alpha^T \Omega \alpha = \|\alpha\|_2^2 \neq 0,
$
which implies $(X + V)^T v \neq 0$.
\end{proof}
%\whcomm{}{
By Theorem~\ref{th01}, two retractions of $\mathcal{F}_v$ based on QR decomposition and polar decomposition are respectively given by
\begin{align} \label{e44}
R_X^{\mathrm{qf}}(V) = v q_*^T / \|v\|_2 + \mathrm{qf}(X + V)	(I - q_* q_*^T)
\end{align}
where $q_* = \mathrm{qf}(X + V)^T v / \|\mathrm{qf}(X + V)^T v\|_2$ and $\mathrm{qf}(X + V)$ denotes the Q factor of the QR decomposition of $X + V$ that, moreover, has positive diagonal entries in the R factor; and
\begin{align} \label{e45}
R_X^{\mathrm{polar}}(V) = v q_*^T / \|v\|_2 + (X + V) (I + V^T V)^{-1/2} (I - q_* q_*^T),
\end{align}
where $q_* = (I + V^T V)^{-1/2} (X + V)^T  v / \|(I + V^T V)^{-1/2} (X + V)^T v\|_2$. Since the dominant part of the computations in~\eqref{e44} and~\eqref{e45} are respectively the QR decomposition and polar decomposition, their computations both takes $O(n p^2)$ flops. The retraction proposed in~\cite[(14)]{CMV2017} is computationally more expensive. Specifically, the retraction in~\cite{CMV2017} is given by
\begin{align} \label{e46}
R_X(V) = \exp(B) \exp(A') X,	
\end{align}
where $A = X^T V$, $A' = X A X^T$, and $B = V X^T - X V^T - 2 A'$. The computation of~\eqref{e46} requires an evaluation of an exponential of an $n$-by-$n$ matrix $B$ and therefore can be computationally unacceptable when $n$ is large.
%}

The proposed proximal gradient method also relies on an orthonormal basis of the normal space of $\mathcal{F}_v$, which is given in Lemma~\ref{le05}.
\begin{lemma} \label{le05}
The set
\begin{align}
\mathcal{B}_X =& \{ X \mathbf{e}_i\mathbf{e}_i^T : i=1,\cdots, q \} \cup \{ \frac{1}{\sqrt{2}}X(\mathbf{e}_i\mathbf{e}_j^T +\mathbf{e}_j\mathbf{e}_i^T) : i=1,\cdots, q, j=i+1,\cdots, q\} \nonumber \\
&\cup \{ X_\perp \tilde{\mathbf{e}}_i \tilde{v}^T X ,  i=1, \cdots, n-q\}, \label{e47}
\end{align}
defines an orthonormal basis of $N_X \mathcal{F}$ with respect to the Euclidean metric, where $x \in \mathcal{F}_v$, $(\mathbf{e}_1, \cdots, \mathbf{e}_q)$ is the canonical basis of $\mathbb{R}^q$, $(\tilde{\mathbf{e}}_1, \cdots, \tilde{\mathbf{e}}_{n-q})$ is the canonical basis of $\mathbb{R}^{n-q}$ and $\tilde{v} = v / \|v\|$.
\end{lemma}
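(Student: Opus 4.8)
The plan is to establish three facts about the set $\mathcal{B}_X$ and then combine them. First I would check that every matrix listed in $\mathcal{B}_X$ actually belongs to the normal space $\N_X\mathcal{F}_v$ described in Theorem~\ref{th02}. Second I would verify that the matrices in $\mathcal{B}_X$ are pairwise orthonormal in the Euclidean inner product $\langle A,B\rangle=\trace(A^TB)$. Third I would count $\mathcal{B}_X$ and compare with $\dim\N_X\mathcal{F}_v$. Since an orthonormal family is automatically linearly independent, the three facts together force $\mathcal{B}_X$ to be an orthonormal basis of $\N_X\mathcal{F}_v$.

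For membership: the matrices $X\mathbf{e}_i\mathbf{e}_i^T$ and $\frac{1}{\sqrt2}X(\mathbf{e}_i\mathbf{e}_j^T+\mathbf{e}_j\mathbf{e}_i^T)$ each have the form $XS$ with $S$ a symmetric $q\times q$ matrix, so they lie in $\N_X\mathcal{F}_v$ with the ``$u$''-part set to zero. For the third family, since $v\in\mathrm{span}(X)$ and $v\neq 0$ we have $X^Tv\neq 0$, hence $\tilde v=v/\|v\|$ is well defined and $X_\perp\tilde{\mathbf{e}}_i\tilde v^TX=X_\perp(\tilde{\mathbf{e}}_i/\|v\|)\,v^TX$ is of the form $X_\perp u\,v^TX$ with $u=\tilde{\mathbf{e}}_i/\|v\|\in\R^{n-q}$, so it too lies in $\N_X\mathcal{F}_v$.

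For orthonormality I would use repeatedly the relations $X^TX=I_q$, $X_\perp^TX_\perp=I_{n-q}$ and $X^TX_\perp=0$, together with the identity $\|X^Tv\|=\|v\|$, which holds because $v\in\mathrm{span}(X)$ lets us write $v=Xw$, so that $X^Tv=w$ and $\|v\|=\|Xw\|=\|w\|$; equivalently $\|X^T\tilde v\|=1$. With these, the verification splits into cases: inner products among the first two families collapse, after cancelling $X^TX$, to traces of products of the rank-one matrices $\mathbf{e}_a\mathbf{e}_b^T$, which are Kronecker deltas, and the conventions $i<j$ and $k<l$ are precisely what forces the off-diagonal cross terms to vanish; the third family satisfies $\langle X_\perp\tilde{\mathbf{e}}_i\tilde v^TX,\,X_\perp\tilde{\mathbf{e}}_j\tilde v^TX\rangle=\delta_{ij}\|X^T\tilde v\|^2=\delta_{ij}$; and every inner product between an $XS$ from the first two families and an $X_\perp\tilde{\mathbf{e}}_i\tilde v^TX$ from the third carries the factor $X^TX_\perp=0$, hence is zero.

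Finally, $\mathcal{B}_X$ has $q+\binom{q}{2}+(n-q)=q(q+1)/2+(n-q)$ elements. Parametrizing $\N_X\mathcal{F}_v$ as in Theorem~\ref{th02} by a symmetric $q\times q$ matrix $S$ and a vector $u\in\R^{n-q}$ gives $\dim\N_X\mathcal{F}_v=q(q+1)/2+(n-q)$ (consistent with $nq-\dim\T_X\mathcal{F}_v$ via Theorem~\ref{thm:Feasible_manifold}), so $\mathcal{B}_X$ is an orthonormal subset of $\N_X\mathcal{F}_v$ of the correct cardinality and therefore a basis. I do not expect a genuine obstacle: the only mildly non-obvious observation is $\|X^Tv\|=\|v\|$, which is what makes the third family automatically normalized, and the remaining effort is the careful trace bookkeeping, in particular tracking how the strict inequalities $i<j$ and $k<l$ remove the unwanted terms among the second family.
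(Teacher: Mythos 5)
Your proposal is correct and follows essentially the same route as the paper's proof: the paper likewise verifies the pairwise trace identities using $X^TX=I_q$, $X_\perp^TX_\perp=I_{n-q}$, $X^TX_\perp=0$, and the fact that $\tilde v\in\mathrm{span}(X)$ implies $\tilde v^TXX^T\tilde v=\tilde v^T\tilde v=1$ (your $\|X^T\tilde v\|=1$), with membership and the dimension count $q(q+1)/2+(n-q)$ taken from the parametrization of $\N_X\mathcal{F}_v$ in Theorem~\ref{th02}. No gaps.
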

\begin{proof}
Let $T_{i j} = \frac{1}{\sqrt{2}}X(\mathbf{e}_i\mathbf{e}_j^T +\mathbf{e}_j\mathbf{e}_i^T)$ if $i \neq j$, $T_{i i} = X \mathbf{e}_i\mathbf{e}_i^T$ and $\tilde{T}_i =  X_\perp\tilde{\mathbf{e}}_i \tilde{v}^T X$.
It is easy to verify that if $i_1 = i_2$ and $j_1 = j_2$, then $\mathrm{trace}(T_{i_1 j_1}^T T_{i_2 j_2}) = 1$, otherwise $\mathrm{trace}(T_{i_1 j_1}^T T_{i_2 j_2}) = 0$ for all $i_1, j_1, i_2, j_2$; and that $\mathrm{trace}(T_{ij}^T \tilde{T}_{k}) = 0$, for all $i, j, k$. We also have
\begin{align*}
\mathrm{trace}(\tilde{T}_i^T \tilde{T}_j) =& \mathrm{trace}\left( X^T \tilde{v} \tilde{\mathbf{e}}_i^T X_{\perp}^T X_{\perp} \tilde{\mathbf{e}}_j \tilde{v}^T X  \right) = \mathrm{trace}\left(  \tilde{\mathbf{e}}_i^T \tilde{\mathbf{e}}_j \tilde{v}^T X X^T \tilde{v} \right) \\
=& \mathrm{trace}\left(  \tilde{\mathbf{e}}_i^T \tilde{\mathbf{e}}_j \tilde{v}^T \tilde{v} \right) = \tilde{\mathbf{e}}_i^T \tilde{\mathbf{e}}_j,
\end{align*}
where the second equality follows from $X_\perp^T X_\perp = I$ and the third equality follows from $\tilde{v} \in \mathrm{span}(X)$.
\end{proof}

Let $V_i$, $i = 1, \ldots q(q+1)/2 + n - q$ denote the entries in the basis $\mathcal{B}_X$. Define a function by
$
B_X: \mathbb{R}^{q (q + 1) / 2 + n - q}	\rightarrow \mathbb{R}^{n \times q}: u \rightarrow B_X u = \sum_{i = 1}^{q(q+1)/2 + n - q} u_i V_i \in N_x \mathcal{F}_v
$
and another function by
$
B_X^T: \mathbb{R}^{n \times p} \rightarrow \mathbb{R}^{q (q + 1) / 2 + n - q}: V \rightarrow u,
$
where $u_i = \mathrm{trace}(V^T V_i)$. These two functions are used in the proximal gradient method given in Section~\ref{sec:Alg}.

%\whcomm{}{
Though the matrix $X_{\perp}$ is used in the orthonormal basis~\eqref{e47}, one does not need to construct such a matrix and only needs to use two mappings $\alpha_X:\mathbb{R}^{n \times q} \rightarrow \mathbb{R}^{n \times q}: A \mapsto (X \; X_{\perp})^T A$ and $\beta_X: \mathbb{R}^{n \times q} \rightarrow \mathbb{R}^{n \times q}: A \mapsto (X \; X_{\perp}) A$. These two mappings can be computed efficiently ($O(n p^2)$), see details in~\cite[Algorithms~4 and~5]{HAG2016VT}. It follows that the mappsing $B_X^T$ and $B_X$ can be computed by $O(n p^2)$ flops and the detailed implementations are stated in Algorithms~\ref{algo:BXT} and~\ref{algo:BX}.
%}

\begin{algorithm}
\caption{%\whcomm{[This algorithm is new]}{} 
Compute $B_X^T: \mathbb{R}^{n \times p} \rightarrow \mathbb{R}^{q (q + 1) / 2 + n - q}:V \mapsto B_X^T (V)$}
\label{algo:BXT}
\begin{algorithmic}[1]
\Require  $X \in \mathcal{F}_v$, $V \in \N_X \mathcal{F}_v$; a positive vector $v \in \mathbb{R}^n$; a function $\alpha_X:\mathbb{R}^{n \times p} \rightarrow \mathbb{R}^{n \times p}: A \mapsto (X \; X_\perp)^T A$.
\State $
          \begin{pmatrix}
            S \\
            K \\
          \end{pmatrix}
         = \alpha_X(V)$, where $S \in \mathbb{R}^{p \times p}$ and $K \in \mathbb{R}^{(n - q) \times q}$;                  % \Comment{See flops in Algorithm~\ref{VT:a6}}
\State $k = 1$; %\label{VT:a1:st1} $\hfill{}$ \# $2 p^2$
\For {$j = 1, \ldots, p$}
 $u(k) = S_{ii}$ and $k \leftarrow k + 1$;
\EndFor
\For {$j = 2, \ldots, p$, $i = 1, \ldots j - 1$} % \Comment{\# $p (p - 1)$}
 $u(k) = \sqrt{2} S_{i j}$, $u(k) = \sqrt{2} S_{j i}$, and $k \leftarrow k + 1$;
\EndFor
\State $z = K X^T v \|v\| / \| v X X^T v\|$;
\For {$j = 1 : n - q$}
 $u(k) = z_j$ and $k \leftarrow k + 1$;
\EndFor
\State return $u$;
\end{algorithmic}
\end{algorithm}

\begin{algorithm}
\caption{ %\whcomm{[This algorithm is new]}{} 
Compute $B_X: \mathbb{R}^{q (q + 1) / 2 + n - q}	\rightarrow \mathbb{R}^{n \times q}: u \mapsto B_X(u)$}
\label{algo:BX}
\begin{algorithmic}[1]
\Require  $X \in \mathcal{F}_v$, $u \in \mathbb{R}^{q (q + 1) / 2 - n - q}$; a positive vector $v \in \mathbb{R}^n$; a function $\beta_X:\mathbb{R}^{n \times p} \rightarrow \mathbb{R}^{n \times p}: A \mapsto (X \; X_\perp) A$.
\State $k = 1$;
\For {$j = 1, \ldots, p$}
 $S_{i i} = u(k)$ and $k \leftarrow k + 1$;
\EndFor
\For {$j = 2, \ldots, p$, $i = 1, \ldots j - 1$} % \Comment{\# $p (p - 1)$}
 $S_{i j} =  u(k) / \sqrt{2}$, $S_{j i} = u(k) / \sqrt{2}$, and $k \leftarrow k + 1$;
\EndFor
\For {$j = 1 : n - q$}
 $z_j = u(k)$ and $k \leftarrow k + 1$;
\EndFor
\State return $\beta_X
                                    \begin{pmatrix}
                                      S \\
                                      z v^T X / \|v\| \\
                                    \end{pmatrix}$; %\Comment{\# See flops in Algorithm~\ref{VT:a7};}
\end{algorithmic}
\end{algorithm}

%\whcomm{[Give the algorithms with complexities of $B_X$ and $B_X^T$.]}{}

\section{An Accelerated Manifold Proximal Gradient Method} \label{sec:Alg}

For simplicity of notation, throughout this section, we use lowercase $x$ to denote a point in the domain manifold $\mathcal{M} := \mathcal{F}_v$ and use $g(x)$ to denote $\lambda \|x\|_1$. Therefore, Problem~\eqref{prob3} becomes
\begin{equation} \label{prob4}
\min_{x \in \mathcal{M}}	 F(x) := f(x) + g(x).
\end{equation}

The proposed algorithm and convergence analysis rely on Assumptions~\ref{as01}. %  and~\ref{as02}
Note that $g(x) = \lambda \|x\|$ is a Lipschitz continuous function. % with its Lipschitz constant $L_g = \lambda$.

\begin{assumption} \label{as01}
The gradient of $f$ is Lipschitz continuous on $\mathcal{M}$ with a Lipschitz constant $L_f$ and the function $g$ is Lipschitz continuous with Lipschitz constant $L_g$, where the Lipschitz continuity is defined in the sense of the Euclidean setting.
\end{assumption}

%\begin{assumption} \label{as02}
%There exist positive constants $0<\kappa_l \leq \kappa_u$ such that $\kappa_l^2 \|v\|^2 \leq \|v\|_{W_{z_k}}^2 \leq \kappa_u^2 \|v\|^2$ for all $z_k$ and any $v$, i.e., all the eigenvalues of $W_{z_k}$ are located in $[\kappa_l^2, \kappa_u^2]$ for all $k$.
%\end{assumption}

\subsection{Algorithm description} \label{sec:AlgDes}

%\whcomm{[This section is shortened.]}{}

The accelerated manifold proximal gradient method proposed for Problem~\eqref{prob4} is stated in Algorithm~\ref{alg:AManPG}. It is based on the AManPG algorithm in~\cite{HuaWei2019}. %We invite the reader to first read the more reader-friendly description in Section~\ref{sec:AlgDes} and to refer to the pseudocode in Algorithm~\ref{alg:AManPG} when needed.

\begin{algorithm}[ht!]
\caption{Accelerated Manifold Proximal Gradient Method (AManPG)}
\label{alg:AManPG}
\begin{algorithmic}[1]
\Require Lipschitz constant $L_f$ on $\nabla f$, parameter $\mu >0$ in the proximal mapping, line search parameter $\sigma \in (0, 1 / (8 \mu)]$, shrinking parameter in line search $\nu \in (0, 1)$, positive integer $N$ for safeguard, initial iterates $\Lambda_y$ and $\Lambda_z$ for the semi-smooth Newton algorithm;  %\whcomm{[TOCONSIDER, use $\kappa_l$ in algorithm or not?]}{}
\State $t_0 = 1$, $y_{0} = x_0$, $z_0 = x_0$;
\For {$k = 0, \ldots$}
%\State Set $\tilde{\xi}_{k - 1} = \frac{t_{k - 1} - 1}{t_{k}} \xi_{k - 1}$ and $y_{k} = R_{x_k}\left( \tilde\xi_{k-1}\right)$
\If {$\mod(k, N) = 0$} \Comment{Invoke safeguard every $N$ iterations}
\State \label{alg:AManPG:st0} Invoke Algorithm~\ref{alg:Safeguard}: {\small $[z_{k + N}, x_k, y_k, t_k, \Lambda_z] = Alg\ref{alg:Safeguard}(z_{k}, x_k, y_k, t_k, F(x_k), \Lambda_z)$;}
\EndIf
\State \label{alg:AManPG:st1} Approximately solve
$
\eta_{y_k} \approx \argmin_{\eta \in \T_{y_k} \mathcal{M}} \inner[]{\grad f(y_k)}{\eta} + \frac{1}{2 \mu} \|\eta\|^2 + g(y_k + \eta)
$
such that~\eqref{e42} holds;
%by invoking Algorithm~\ref{alg:SSN}: $[\eta_{y_k}, \Lambda_y] = Alg\ref{alg:SSN}(y_k, \Lambda_y, \grad f(y_k))$;
%\State Set $\alpha = 1$;
%\While {$F(R_{y_k}(\alpha\eta_{y_k})) > F(y_k) - \sigma \alpha \|\eta_{y_k}\|^2$}
%\State $\alpha = \nu \alpha$;
%\EndWhile
\State \label{alg:AManPG:st2} $x_{k+1} = R_{y_k}(\eta_{y_k})$;
\State \label{alg:AManPG:st3} $t_{k + 1} = \frac{\sqrt{4 t_k^2 + 1} + 1}{2}$;
\State \label{alg:AManPG:st4} Compute
$
y_{k + 1} = R_{x_{k + 1}}\left( \frac{1 - t_k}{t_{k+1}} P_{T_{x_{k+1}} \mathcal{M}} (x_k - x_{k+1}) %R_{x_{k+1}}^{-1}(x_k) 
\right);
$
\EndFor
\end{algorithmic}
\end{algorithm}

\begin{algorithm}[ht!]
\caption{Safeguard for Algorithm 1}
\label{alg:Safeguard}
\begin{algorithmic}[1]
\Require $(z_{k}, x_k, y_k, t_k, F(x_k), \Lambda_z)$; The maximum number of iterations for line search $N_{\max} > 0$;
\Ensure $[z_{k + N}, x_k, y_k, t_k, \Lambda_z]$;
\State \label{alg:Safeguard:st1} Approximately solve
$
\eta_{z_k} \approx \argmin_{\eta \in \T_{z_k} \mathcal{M}} \inner[]{\grad f(z_k)}{\eta} + \frac{1}{2 \mu} \|\eta\|_{W_{z_k}}^2 + g(z_k + \eta)
$
such that~\eqref{e42} holds;
%by invoking Algorithm~\ref{alg:SSN}: $[\eta_{z_k}, \Lambda_z] = Alg3(z_k, \Lambda_z, \grad f(z_k))$;
\State Set $\alpha = 1$, and $i_{\mathrm{iter}} = 0$;
\While {$F(R_{z_k}(\alpha\eta_{z_k})) > F(z_k) - \sigma \alpha {\|\eta_{z_k}\|_{\mathrm{F}}^2}$ and $i_{\mathrm{iter}} < N_{\max}$} \label{alg:Safeguard:st2}
\State \label{alg:Safeguard:st5} $\alpha = \nu \alpha$; $i_{\mathrm{iter}} = i_{\mathrm{iter}} + 1$;
\EndWhile
\If {$i_{\mathrm{iter}} = N_{\max}$}
\State \label{alg:Safeguard:st6} Line search fails;
\EndIf
\If {$F(R_{z_k}(\alpha\eta_{z_k})) < F(x_k)$} \Comment{Safeguard takes effect} \label{alg:Safeguard:st3}
\State $x_k = R_{z_k}(\alpha\eta_{z_k})$, $y_k = R_{z_k}(\alpha\eta_{z_k})$, and $t_k=1$;
\Else
\State $x_k$, $y_k$ and $t_k$ keep unchanged;
\EndIf \label{alg:Safeguard:st4}
\State $z_{k+N} = x_k$; \Comment{Update the compared iterate;}
\end{algorithmic}
\end{algorithm}

%It is well-known that the FISTA algorithm does not converge globally for non-convex problems. Therefore, a modification is required in FISTA for global convergence. Here, we use the same approach as that in~\cite{HuaWei2019} by adding a safeguard of a proximal gradient step, see~Algorithm~\ref{alg:Safeguard}. Specifically, let $x_{i N}, x_{i N + 1}, \ldots, x_{i N + N}$ be a sequence of $N+1$ consecutive iterates generated by the Riemannian version of the FISTA, i.e., Steps~\ref{alg:AManPG:st1} to~\ref{alg:AManPG:st4} of Algorithm~\ref{alg:AManPG}. We then compare the function value at $x_{i N + N}$ to that at an iterate by a proximal gradient step, i.e., $R_{x_{i N}} (\alpha \eta_{x_{i N}})$, where $\alpha$ is an appropriate step size from Step~\ref{alg:Safeguard:st2} of Algorithm~\ref{alg:Safeguard}. If $F(R_{x_{i N}} (\alpha \eta_{x_{i N}})) < F(x_{i N + N})$, then the safeguard takes effect by restarting the Riemannian version of FISTA, see Step~\ref{alg:Safeguard:st3} of Algorithm~\ref{alg:Safeguard}.

The main difference between Algorithm~\ref{alg:AManPG} and~\cite[Algorithm~1]{HuaWei2019} is the accuracy for solving the subproblem 
\begin{equation} \label{e10}
\argmin_{\eta \in \T_{y_k} \mathcal{M}} \inner[]{\grad f(y_k)}{\eta} + \frac{1}{2 \mu} \|\eta\|^2 + g(y_k + \eta).
\end{equation}
In~\cite{HuaWei2019}, the authors require the subproblem to be solved exactly for convergence analysis, whereas we only require solving it with sufficient accuracy. The quantitive accuracy that guarantees global convergence is also given.

A proximal mapping in the Euclidean setting often admits a computationally cheap closed-form solution. However, in the Riemannian setting, the proximal mapping does not usually have a closed form solution due to the existence of an extra linear constraint: $\eta \in \T_{y_k} \mathcal{M}$. The existing Riemannian proximal mappings in~\cite{CMSZ2019,HuaWei2019,HuaWei2019b,HuaWei2021} are solved by a semi-smooth Newton algorithm. In the theoretical analyses of~\cite{CMSZ2019,HuaWei2019,HuaWei2019b} for global convergence, the Riemannian proximal mappings are assumed to be solved exactly. 
In~\cite{HuaWei2021}, an inexact Riemannian proximal gradient (IRPG) is proposed. Though not solving the Riemannian proximal mapping exactly, IRPG assumes a sufficiently small $\mu$ in~\eqref{e10} which needs to be estimated in practice. 
Here, we also need the Riemannian proximal mapping to be solved approximately. It is shown later in Lemma~\ref{le02} that if the Riemannian proximal mapping is solved with sufficient accuracy, then the search direction is descent, independent of the choice of~$\mu$.

%Though~\cite{HuaWei2021} also solves a Riemannian proximal mapping inexactly, .
%Algorithm~\ref{alg:SSN} states the semi-smooth Newton algorithm used in this paper. 
If $\eta_{x}^*$ is the exact solution of~\eqref{e10} (%for conciseness and consistency with Algorithm~\ref{alg:SSN}, 
we omit the subscript $k$, use $x$ instead of $y$, and use $\xi_x$ to denote $\grad f(x)$.), then it satisfies
\begin{equation} \label{e11}
\eta_x^* = \argmin_\eta \langle \xi_x,\eta \rangle+\frac{1}{2\mu}\langle \eta,\eta\rangle + g(x+\eta)\quad\mbox{subject to}\quad \eta \in \T_x \mathcal{M}.
\end{equation}
It follows that $\eta \in \T_x \mathcal{M}$ is equivalent to $B_x^T \eta = 0$. Therefore, the KKT condition for~\eqref{e11} is given by
\begin{align}
&\partial_{\eta} \mathcal{L}(\eta, \Lambda) = 0, \label{e12} \\
&B_x^T \eta = 0, \label{e13}
\end{align}
where $\mathcal{L}(\eta, \Lambda)$ is the Lagrangian function defined by
\begin{equation}
\mathcal{L}(\eta,\Lambda)=\langle \xi_x,\eta\rangle+\frac{1}{2\mu}\langle \eta,\eta\rangle+g(x+\eta)-\langle\Lambda,B_x^T \eta\rangle.
\end{equation}
Equation~\eqref{e12} yields
\begin{equation} \label{e14}
\eta= v(\Lambda) := \Prox_{\mu g}\left(x - \mu (\xi_x - B_x \Lambda ) \right) - x,
\end{equation}
where 
\begin{equation}\label{e16}
\Prox_{\mu g}(z) = \argmin_{v\in\mathbb{R}^{n\times p}} \frac{1}{2}\|v-z\|^2+\mu g(v)
\end{equation}
denotes the proximal mapping. Substituting~\eqref{e14} into~\eqref{e13} yields that
\begin{equation} \label{e15}
\Psi(\Lambda):=B_x^T \left( \Prox_{\mu g}\left(x - \mu (\xi_x - B_x \Lambda ) \right) - x \right)=0,
\end{equation}
which is a system of nonlinear equations with respect to $\Lambda$. Therefore, to solve~\eqref{e11}, one can first find any root of~\eqref{e15} and substitute it back to~\eqref{e14} to obtain~$\eta_x^*$.

The equation~\eqref{e15} can be solved efficiently by a semi-smooth Newton method. Analogous to the classical Newton method, the estimation of $\Lambda$ is updated by $\Lambda_{k + 1} = \Lambda_k + d_k$, where $d_k$ is computed by solving a Newton equation, i.e.,
\begin{equation} \label{e17}
J_\Psi(\Lambda_k)[d] = -\Psi(\Lambda_k),
\end{equation}
where $J_\Psi(\Lambda_k)$ is a generalized Jacobian of $\Psi$. Note that when %$W_x$ is a diagonal operator and
 $g(x)=\lambda \|x\|_1$, it is well-known~\cite{Beck2017} that 
the solution to the proximal mapping \eqref{e16} can be computed by thresholding each entry of $z$. Moreover, by the chain rule, we have %\whcomm{$\odot$ is used if the proximal mapping is separable. Therefore, below formula needs to be modified. TODO}{}
\begin{align*}{
J_\Psi(\Lambda_k)[d] = B_x^T \left(\partial\Prox_{\mu g} \left(x-\mu (\xi_x - B_x \Lambda_k)\right) \odot \left(\mu B_x d\right)\right),}
\end{align*}
where $\partial \Prox_{\mu g}(\cdot)$ denotes the generalized Clarke subdifferential of $\Prox_{\mu g}(\cdot)$ and $\odot$ denotes the entrywise product of two matrices. 
Once again,  when %$W_x$ is a diagonal operator and 
$g(x)=\lambda \|x\|_1$ the generalized Clarke subdifferential of $\Prox_{\mu g}(\cdot)$ can also be computed in an entrywise manner~\cite{CLA90,XLWZ2016,LiSunToh18}. 
%In Algorithm~\ref{alg:SSN}, a regularizer is further added to~\eqref{e17} to avoid the problem in which $J_{\Psi}$ is singular, see Step~\ref{alg:SSN:st1}. Note that the updates in Algorithm~\ref{alg:SSN} follow the regularized semi-smooth Newton algorithm~\cite{XLWZ2016} with slight modifications. 
Here, we do not require solving ~\eqref{e15} exactly but only find a $\Lambda$ such that $\|\Psi(\Lambda)\|$ is sufficiently small in the sense that
\begin{equation} \label{e42}
\|\Psi(\Lambda)\| \leq \sqrt{4 \mu^2 L_g^2 + \|\hat{v}(\Lambda)\|^2 / 2} - 2 \mu L_g,
\end{equation}
where $\hat{v}(\Lambda) = P_{\T_{x_k} \mathcal{M}} ( \Prox_{\mu g} \left(x - \mu (\xi_x - B_x \Lambda ) \right) - x )$ is the resulting search direction~$\eta_x$. %, see Step~\ref{alg:SSN:st0}. %Since $g(x) = \lambda\|x\|_1$, we have $L_g = \lambda$.

\subsection{Global convergence analysis}

Lemma~\ref{le04} states the key result used to prove the global convergence. In~\cite[Lemma~5.1]{CMSZ2019}, the inequality~\eqref{e36} is proven up to a coefficient under the assumption that the subproblem~\eqref{e10} is solved exactly. Here, it is shown that if the subproblem is solved accurately enough such that~\eqref{e29} holds, then we also have the inequality~\eqref{e36}. %\whcomm{}{
The result~\eqref{e36} has not been given for the existing inexact Riemannian proximal gradient method~\cite{HuaWei2021} and is crucial to Algorithm~\ref{alg:AManPG}.%}

\begin{lemma} \label{le04}
Let $\ell_x(\eta) = \langle \xi_x, \eta \rangle+\frac{1}{2\mu}\langle \eta, \eta\rangle + g(x+\eta)$. Let $\epsilon$ denote $\Psi(\Lambda) = B_x^T v (\Lambda)$.
We then have
\[
g(x) \geq \inner[]{\xi_x}{ \hat{v}(\Lambda)} + \frac{1}{2 \mu} \| \hat{v}(\Lambda) \|^2 + g(x + \hat{v}(\Lambda)) - (2 L_g + \frac{1}{2 \mu} \|\epsilon\|) \|\epsilon\|,
\]
where $\hat{v}$ is defined in~\eqref{e42}. Furthermore, if $\epsilon$ is sufficiently close to 0 in the sense that 
\begin{equation} \label{e29}
\|\epsilon\| \leq \sqrt{ 4 \mu^2 L_g^2 + \|\hat{v}(\Lambda)\|^2/2 } - 2 \mu L_g, 
\end{equation}
then it holds that
\begin{equation} \label{e36}
\ell_x( \alpha \hat{v}(\Lambda) ) - \ell_x(0) \leq - \left( \frac{\alpha (1 - 2 \alpha) }{4 \mu} \right) \|\hat{v}(\Lambda)\|^2, \quad \forall \alpha \in [0, 1].
\end{equation}
%Moreover, if the while loop in Step~\ref{alg:SSN:st0} of Algorithm~\ref{alg:SSN} exits, then~\eqref{e29} holds.
\end{lemma}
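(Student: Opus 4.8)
The plan is to exploit two facts about the proximal mapping: first, that $v(\Lambda)$ is the exact minimizer of the unconstrained problem defining $\Prox_{\mu g}$, so it satisfies a strong (firm-nonexpansiveness / three-point) inequality; and second, that $\hat v(\Lambda) = P_{\T_x\mathcal{M}}(v(\Lambda))$ differs from $v(\Lambda)$ only by the normal component $B_x\epsilon$ (up to the action of $B_x$), whose norm is controlled by $\|\epsilon\|$. Concretely, writing $z = x - \mu(\xi_x - B_x\Lambda)$, the optimality of $w := \Prox_{\mu g}(z) = x + v(\Lambda)$ for \eqref{e16} gives, for every $u$,
\[
\tfrac{1}{2}\|w-z\|^2 + \mu g(w) \le \tfrac{1}{2}\|u-z\|^2 + \mu g(u) - \tfrac{1}{2}\|u-w\|^2 .
\]
I would apply this with $u = x$ to extract a bound on $g(x)$ in terms of $g(w)=g(x+v(\Lambda))$, $\inner{\xi_x}{v(\Lambda)}$, $\frac{1}{2\mu}\|v(\Lambda)\|^2$, and an inner-product term $\inner{B_x\Lambda}{v(\Lambda)} = \inner{\Lambda}{B_x^T v(\Lambda)} = \inner{\Lambda}{\epsilon}$ that would vanish if $\epsilon=0$. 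Then I would replace $v(\Lambda)$ by $\hat v(\Lambda)$ everywhere, absorbing the error: $\|v(\Lambda)-\hat v(\Lambda)\|$ and $|g(x+v(\Lambda))-g(x+\hat v(\Lambda))|$ are each $O(\|\epsilon\|)$ — the former because it equals the norm of the normal component, which is $\|B_x\epsilon\|=\|\epsilon\|$ since $B_x$ is an isometry onto $\N_x\mathcal{F}_v$ (Lemma~\ref{le05}), the latter by the $L_g$-Lipschitz continuity of $g$ from Assumption~\ref{as01}. Collecting all error terms and bounding $\|\Lambda\|$-type contributions appropriately should yield the stated inequality with the coefficient $(2L_g + \frac{1}{2\mu}\|\epsilon\|)\|\epsilon\|$.

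For the second part, I would first rewrite the first inequality as an upper bound on $\ell_x(\hat v(\Lambda)) - \ell_x(0)$: since $\ell_x(0) = g(x)$ and $\ell_x(\hat v(\Lambda)) = \inner{\xi_x}{\hat v(\Lambda)} + \frac{1}{2\mu}\|\hat v(\Lambda)\|^2 + g(x+\hat v(\Lambda))$, the displayed inequality says exactly
\[
\ell_x(\hat v(\Lambda)) - \ell_x(0) \le \big(2L_g + \tfrac{1}{2\mu}\|\epsilon\|\big)\|\epsilon\| .
\]
Separately, convexity-type structure of $\ell_x$ (the quadratic-plus-convex part) together with the same three-point inequality gives a \emph{negative} bound of the form $\ell_x(\hat v(\Lambda)) - \ell_x(0) \le -\frac{1}{2\mu}\|\hat v(\Lambda)\|^2 + (\text{error})$; more precisely, I expect to obtain $\ell_x(\alpha\hat v(\Lambda)) - \ell_x(0) \le -\frac{\alpha}{2\mu}\|\hat v(\Lambda)\|^2 + \frac{\alpha^2}{2\mu}\|\hat v(\Lambda)\|^2 + (\text{error in }\|\epsilon\|)$ by evaluating $\ell_x$ along the segment $t\mapsto \alpha\hat v(\Lambda)$ and using convexity of $g$ to write $g(x+\alpha\hat v(\Lambda)) \le (1-\alpha)g(x) + \alpha g(x+\hat v(\Lambda))$. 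The factor $\frac{\alpha(1-2\alpha)}{4\mu}$ in \eqref{e36} strongly suggests this is the mechanism: the $\frac{1}{2\mu}\alpha - \frac{1}{2\mu}\alpha^2$ term from the quadratic, halved, against which the $\|\epsilon\|$-error is charged. The condition \eqref{e29} is precisely what is needed to guarantee $(2L_g + \frac{1}{2\mu}\|\epsilon\|)\|\epsilon\| \le \frac{1}{4\mu}\|\hat v(\Lambda)\|^2$: indeed \eqref{e29} rearranges to $(\|\epsilon\| + 2\mu L_g)^2 \le 4\mu^2 L_g^2 + \frac{1}{2}\|\hat v(\Lambda)\|^2$, i.e. $\|\epsilon\|^2 + 4\mu L_g\|\epsilon\| \le \frac{1}{2}\|\hat v(\Lambda)\|^2$, i.e. $2\mu(2L_g + \frac{1}{2\mu}\|\epsilon\|)\|\epsilon\| \le \frac{1}{2}\|\hat v(\Lambda)\|^2$, which is exactly $(2L_g+\frac{1}{2\mu}\|\epsilon\|)\|\epsilon\| \le \frac{1}{4\mu}\|\hat v(\Lambda)\|^2$. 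So I would substitute that bound into the error term and combine.

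\textbf{Main obstacle.} The routine part is the three-point inequality bookkeeping; the delicate part is tracking the error introduced by projecting $v(\Lambda)$ onto $\T_x\mathcal{M}$ to get $\hat v(\Lambda)$ — in particular, making sure that the cross term $\inner{\xi_x}{v(\Lambda)-\hat v(\Lambda)}$ is genuinely $O(\|\epsilon\|)$ rather than merely bounded, and that the $\inner{\Lambda}{\epsilon}$ term does not secretly hide a dependence on $\|\Lambda\|$ that could blow up. I expect this is handled by observing that $v(\Lambda)-\hat v(\Lambda) = P_{\N_x\mathcal{F}_v}(v(\Lambda)) = B_x(B_x^T v(\Lambda)) = B_x\epsilon$, so $\inner{\xi_x}{v(\Lambda)-\hat v(\Lambda)} = \inner{B_x^T\xi_x}{\epsilon}$, and more importantly that the optimality condition \eqref{e14} lets one re-express things so the $\Lambda$-dependence is absorbed into $v(\Lambda)$ itself rather than appearing explicitly. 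I would therefore be careful to use \eqref{e14} to eliminate $\Lambda$ wherever possible before estimating. Getting the constants in \eqref{e29} to match \eqref{e36} exactly — as verified above — is the final consistency check that the error accounting is tight.
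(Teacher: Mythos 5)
Your overall strategy (a strong-convexity/three-point inequality for the prox, Lipschitz control of the $O(\|\epsilon\|)$ discrepancy between $v(\Lambda)$ and $\hat v(\Lambda)$, convexity of $g$ for the $\alpha$-scaling, and the algebraic check that \eqref{e29} is equivalent to $(2L_g+\frac{1}{2\mu}\|\epsilon\|)\|\epsilon\|\le\frac{1}{4\mu}\|\hat v(\Lambda)\|^2$) is the right one, and your second part is correct as written. But the first part has a genuine gap which you flag as the "main obstacle" and then do not close: anchoring the three-point inequality at $u=x$ leaves the Lagrange-multiplier cross term. Writing it out with $z=x-\mu(\xi_x-B_x\Lambda)$ and $w=x+v(\Lambda)$, the choice $u=x$ yields
\[
g(x)\ \ge\ \inner[]{\xi_x}{v(\Lambda)}+\frac{1}{\mu}\|v(\Lambda)\|^2+g(x+v(\Lambda))-\inner[]{\Lambda}{\epsilon},
\]
and $\inner[]{\Lambda}{\epsilon}$ is only $O(\|\Lambda\|\,\|\epsilon\|)$. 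Your proposed remedy---use \eqref{e14} to eliminate $\Lambda$---does not produce the stated constant: the KKT relation gives $B_x\Lambda\in\xi_x+\frac{1}{\mu}v(\Lambda)+\partial g(x+v(\Lambda))$, so the best available bound is $|\inner[]{\Lambda}{\epsilon}|\le(\|\xi_x\|+\frac{1}{\mu}\|v(\Lambda)\|+L_g)\|\epsilon\|$, which drags $\|\grad f(x)\|$ into the error term; the same issue recurs when you pass from $\inner[]{\xi_x}{v(\Lambda)}$ to $\inner[]{\xi_x}{\hat v(\Lambda)}$, since the difference $\inner[]{\xi_x}{B_x\epsilon}$ is likewise only bounded by $\|\xi_x\|\|\epsilon\|$. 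The target inequality contains no such $\|\xi_x\|$-dependent terms.

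The repair---and this is exactly what the paper does---is to change the anchor point: compare $v(\Lambda)$ not with $0$ but with $v_0=B_x\epsilon$, which lies in the same affine slice $\mathcal{S}=\{\eta: B_x^T\eta=\epsilon\}$ over which $v(\Lambda)$ is the \emph{exact} minimizer of $\ell_x$. Then $v_0-v(\Lambda)$ is tangent to $\mathcal{S}$, so the first-order term $\inner[]{\partial\ell_x(v(\Lambda))}{v_0-v(\Lambda)}$ contains zero and the multiplier never appears; $\frac{1}{\mu}$-strong convexity of $\ell_x$ gives $\ell_x(B_x\epsilon)\ge\ell_x(v(\Lambda))+\frac{1}{2\mu}\|v(\Lambda)-B_x\epsilon\|^2$ with $v(\Lambda)-B_x\epsilon=\hat v(\Lambda)$; and the cross term $\inner[]{\xi_x}{B_x\epsilon}$ inside $\ell_x(B_x\epsilon)$ cancels exactly against the one produced by splitting $\inner[]{\xi_x}{v(\Lambda)}=\inner[]{\xi_x}{\hat v(\Lambda)}+\inner[]{\xi_x}{B_x\epsilon}$. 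What survives is precisely $\frac{1}{2\mu}\|B_x\epsilon\|^2$ plus two $g$-differences over displacements of norm $\|B_x\epsilon\|=\|\epsilon\|$, i.e.\ the constant $(2L_g+\frac{1}{2\mu}\|\epsilon\|)\|\epsilon\|$. Equivalently, in your formulation: take $u=x+B_x\epsilon$ rather than $u=x$ in the three-point inequality. With that single change your argument goes through and the rest of your plan is sound.
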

\begin{proof}
Consider the optimization problem
\begin{align} \label{prob5}
\min_{B_x^T \eta = \epsilon} \ell_x(\eta).
\end{align}
Its KKT condition is given by
\begin{equation*}
\partial_{\eta} \mathcal{L}(\eta, \Lambda) = 0, \qquad B_x^T \eta = \epsilon,
\end{equation*}
which is satisfied by $v(\Lambda)$ defined in~\eqref{e14}. Therefore, $v(\Lambda)$ is the minimizer of $\ell_x(\eta)$ over the set $\mathcal{S} = \{v : B_x^T v = \epsilon \}$, i.e.,
\begin{equation} \label{e33}
v(\Lambda) = \argmin_{v \in \mathcal{S}} \ell_x(\eta) = \langle \xi_x, \eta \rangle+\frac{1}{2\mu}\langle \eta,\eta\rangle + g(x+\eta).
\end{equation}

Define the vector $v_0 = B_x \epsilon$. It can be easily verified that $B_x^T v_0 = B_x^T B_x \epsilon = \epsilon$. Therefore, it holds that $v_0 \in \mathcal{S}$.
%Define functions $\tilde{\ell}_x (\eta) = \langle W_x^{-1/2} \xi_x, \eta \rangle+\frac{1}{2\mu}\langle \eta, \eta\rangle + g(x+ W_x^{-1/2} \eta)$ and $\theta(\eta) = W_x^{1/2} \eta$. We have $\ell_x = \tilde{\ell}_x \circ \theta$.
By $\frac{1}{\mu}$-strong convexity of ${\ell}_x$, we have
\begin{equation} \label{e20}
\ell_x(v_0) \geq \ell_x(v(\Lambda)) + \inner[]{\partial \ell_x( v(\Lambda) )}{ v_0 - v(\Lambda) } + \frac{1}{2 \mu} \|v_0 - v(\Lambda)\|^2.
\end{equation}
From the optimality condition of Problem~\eqref{prob5}, we have that $0 \in P_{\T_{\eta} \mathcal{S}} \partial \ell_x( v(\Lambda) )$. Since $\T_{\eta} \mathcal{S} = \{u : B_x^T u = 0\}$ and $B_x^T (v_0 - v(\Lambda)) =\epsilon - \epsilon = 0$, it holds that $v_0 - v(\Lambda) \in \T_{\eta} \mathcal{S}$. Therefore, we have
\begin{equation} \label{e19}	
0 \in \inner[]{\partial \ell_x( v(\Lambda) )}{ v_0 - v(\Lambda) }.
\end{equation}
It follows from~\eqref{e20} and~\eqref{e19} that
\begin{equation} \label{e18}
\ell_x(B_x \epsilon) \geq \ell_x( v(\Lambda) ) + \frac{ 1 }{2 \mu} \| v(\Lambda) - B_x \epsilon \|^2.
\end{equation}
Substituting the definition of $\ell_x$ into inequality~\eqref{e18} and noting $\epsilon = B_x^T v(\Lambda)$, we have that
\begin{align}
\frac{1}{2 \mu} \|B_x B_x^T v(\Lambda)\|^2 + g(x + B_x B_x^T v(\Lambda) )& \geq \nonumber \\
\inner[]{\xi_x}{ v(\Lambda) } + \frac{1}{2 \mu} \| v(\Lambda) \|^2 +& g(x + v(\Lambda)) + \frac{1}{2 \mu} \|v(\Lambda) - B_x B_x^T v(\Lambda) \|.  \label{e21}
\end{align}
It follows that
\begin{align}
&g(x) \geq \inner[]{\xi_x}{P_{\T_x \mathcal{M}} v(\Lambda)} + \frac{1}{2 \mu} \| P_{T_x \mathcal{M}} v(\Lambda) \|^2+ g(x + v(\Lambda)) + g(x) \nonumber \\
&- g(x + B_x \epsilon) - \frac{1}{2 \mu} \|B_x B_x^T v(\Lambda)\|^2 \nonumber \\
\geq& \inner[]{\xi_x}{ \hat{v}(\Lambda)} + \frac{1}{2 \mu} \| \hat{v}(\Lambda) \|^2 + g(x + \hat{v}(\Lambda)) + g(x + v(\Lambda)) \nonumber\\
& - g(x + (I - B_x B_x^T) v(\Lambda)) + g(x) - g(x + B_x \epsilon) - \frac{1}{2 \mu} \|B_x \epsilon\|^2 \nonumber\\
\geq& \inner[]{\xi_x}{ \hat{v}(\Lambda)} + \frac{1}{2 \mu} \| \hat{v}(\Lambda) \|^2 + g(x + \hat{v}(\Lambda)) - |g(x + v(\Lambda)) - g(x + v(\Lambda) - B_x \epsilon )| \nonumber\\
& - |g(x) - g(x + B_x \epsilon)| - \frac{1}{2 \mu} \|B_x \epsilon\|^2 \nonumber\\
\geq& \inner[]{\xi_x}{ \hat{v}(\Lambda)} + \frac{1}{2 \mu} \| \hat{v}(\Lambda) \|^2 + g(x + \hat{v}(\Lambda)) - (2 L_g + \frac{1}{2 \mu} \|B_x \epsilon\|) \|B_x \epsilon\|, \label{e22}
\end{align}
where the first inequality follows from~\eqref{e21} and $\| v(\Lambda) \|^2 \geq 0$, the second inequality follows from $\hat{v}(\Lambda) = P_{\T_x \mathcal{M}} v(\Lambda) = (I - B_x B_x^T) v(\Lambda)$, and the fourth inequality follows from the Lipschitz continuity of $g$ with Lipschitz constant $L_g$. This completes the proof for the first result.

Since $g$ is convex, we have
\begin{equation} \label{e23}
g(x + \alpha \hat{v}(\Lambda) ) - g(x) = g( \alpha (x + \hat{v}(\Lambda)) + (1-\alpha) x ) - g(x) \leq \alpha ( g(x + \hat{v}(\Lambda)) - g(x) ).
\end{equation}
Combining~\eqref{e22} and~\eqref{e23} yields
\begin{align}
\ell_x( \alpha \hat{v}(\Lambda) ) -& \ell_x(0) = \inner[]{\xi_x}{ \alpha \hat{v}(\Lambda) } + \frac{1}{2 \mu} \| \alpha \hat{v}(\Lambda) \|^2 + g(x + \alpha \hat{v}(\Lambda)) - g(x) \nonumber \\
\leq& \alpha \left( \inner[]{\xi_x}{ \hat{v}(\Lambda) } + \frac{\alpha}{2 \mu} \| \hat{v}(\Lambda) \|^2 + g(x + \hat{v}(\Lambda)) - g(x)\right) \nonumber \\
\leq& \alpha \left( \frac{\alpha}{2 \mu} \| \hat{v}(\Lambda) \|^2 - \frac{1}{2 \mu} \| \hat{v}(\Lambda) \|^2 + (2 L_g + \frac{1}{2 \mu} \|B_x \epsilon\|) \|B_x \epsilon\| \right). \label{e24}
\end{align}
By $\|\epsilon\| = \|B_x \epsilon\| \leq \sqrt{ 4 \mu^2 L_g^2 + \|\hat{v}(\Lambda)\|^2/2 } - 2 \mu L_g$, we have 
\begin{equation}
(2 L_g + \frac{1}{2 \mu} \|B_x \epsilon\|) \|B_x \epsilon\| \leq \frac{1}{4 \mu} \|\hat{v}(\Lambda)\|^2. \label{e25}
\end{equation}
The second result follows from~\eqref{e24} and~\eqref{e25}. Finally,~\eqref{e29} follows from the definition of $\Psi(\Lambda)$.
%By Assumption~\ref{as02}, we have $\|B_x \epsilon\|_{W_x} \leq \kappa_u \|B_x \epsilon\| = \kappa_u \|\epsilon\| = \kappa_u \| \Psi(\Lambda) \|$ and $\kappa_l^2 \| \hat{v}(\Lambda)\|^2 \leq \| \hat{v}(\Lambda)\|_{W_x}^2$.
%Thus, $\kappa_u \|\Psi(\Lambda)\| \leq \sqrt{4 \mu^2 L_g^2 + \kappa_l^2 \|\hat{v}(\Lambda)\|^2 / 2} - 2 \mu L_g$ implies that~\eqref{e29} holds.
\end{proof}

Lemma~\ref{le02} implies that the while loop in Step~\ref{alg:Safeguard:st2} of Algorithm~\ref{alg:Safeguard} terminates in a finite number of iterations. Given~\eqref{e36} in Lemma~\ref{le04}, the proof of Lemma~\ref{le02} follows the same steps as that of~\cite[Lemma~5.2]{CMSZ2019} is therefore omitted. 
\begin{lemma} \label{le02}
Suppose Assumption~\ref{as01} holds. Then for any $\mu > 0$, there exists a constant $\bar{\alpha} \in (0, 1]$ such that for any $0 < \alpha < \bar{\alpha}$, Step~\ref{alg:Safeguard:st2} of Algorithm~\ref{alg:Safeguard} is satisfied, and the sequence $\{z_k\}$ generated by Algorithm~\ref{alg:AManPG} satisfies
\[
F(R_{z_k}( \alpha \eta_{z_k} )) - F(z_k) \leq - \frac{\alpha}{8 \mu} \|\eta_{z_k}\|^2.
\]
Moreover, the step size $\alpha > \rho \bar{\alpha}$ for all $k$.
\end{lemma}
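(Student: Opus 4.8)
The plan is to combine a uniform second-order ("retraction-Lipschitz") upper bound for $F$ along the retraction $R$ with the sufficient-decrease estimate~\eqref{e36} of Lemma~\ref{le04}, along the same line as the proof of~\cite[Lemma~5.2]{CMSZ2019}. The one point that needs checking here is that~\eqref{e36} is actually available: the approximate subproblem solution $\eta_{z_k}$ produced in Step~\ref{alg:Safeguard:st1} is required to satisfy~\eqref{e42}, and~\eqref{e42} is word for word the hypothesis~\eqref{e29} of Lemma~\ref{le04} with $\epsilon = \Psi(\Lambda)$ and $\hat v(\Lambda) = \eta_{z_k}$, so~\eqref{e36} holds for $x = z_k$, $\eta = \eta_{z_k}$. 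Throughout I abbreviate $x = z_k$, $\eta = \eta_{z_k}$, $\xi_x = \grad f(x)$, and I may assume $\eta \neq 0$, since otherwise $R_x(\alpha\eta) = x$ and all claims are trivial.

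Next I would record the retraction smoothness of the two pieces of $F$. By Theorem~\ref{thm:Feasible_manifold} the set $\mathcal{F}_v$ is a compact embedded submanifold of $\mathbb{R}^{n\times q}$ and by Theorem~\ref{th01}, using the QR or polar variant, $R$ is a retraction on it with domain the whole tangent bundle, so~\eqref{e26}--\eqref{e27} hold with uniform constants $M_1, M_2$. Using the Euclidean descent inequality for $f$ (legitimate with a uniform constant because $\mathcal{F}_v$ is compact and $f$ is smooth with Lipschitz gradient by Assumption~\ref{as01}), the identity $\inner[]{\nabla f(x)}{\eta} = \D f(x)[\eta] = \inner[]{\xi_x}{\eta}$ for $\eta \in \T_x\mathcal{F}_v$, the bound $|\inner[]{\nabla f(x)}{R_x(\alpha\eta) - x - \alpha\eta}| \le \|\nabla f(x)\|\,M_2\,\alpha^2\|\eta\|^2$ from~\eqref{e27}, $\|R_x(\alpha\eta) - x\|^2 \le M_1^2\alpha^2\|\eta\|^2$ from~\eqref{e26}, and $\sup_{x\in\mathcal{F}_v}\|\nabla f(x)\| < \infty$, one gets a constant $C_f$ independent of $x$ with $f(R_x(\alpha\eta)) \le f(x) + \alpha\inner[]{\xi_x}{\eta} + C_f\alpha^2\|\eta\|^2$; and $L_g$-Lipschitzness of $g$ together with~\eqref{e27} gives $g(R_x(\alpha\eta)) \le g(x + \alpha\eta) + L_g M_2\alpha^2\|\eta\|^2$. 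Adding these, using $\alpha\inner[]{\xi_x}{\eta} + g(x+\alpha\eta) = \ell_x(\alpha\eta) - \tfrac{\alpha^2}{2\mu}\|\eta\|^2$ and $F(x) = f(x) + \ell_x(0)$, and writing $C := C_f + L_g M_2$, I obtain
\[
F(R_x(\alpha\eta)) - F(x) \;\le\; \ell_x(\alpha\eta) - \ell_x(0) + \Big(C - \tfrac{1}{2\mu}\Big)\alpha^2\|\eta\|^2 .
\]

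Applying~\eqref{e36}, i.e. $\ell_x(\alpha\eta) - \ell_x(0) \le -\tfrac{\alpha(1-2\alpha)}{4\mu}\|\eta\|^2$ for $\alpha\in[0,1]$, the $\pm\tfrac{\alpha^2}{2\mu}\|\eta\|^2$ contributions cancel and
\[
F(R_{z_k}(\alpha\eta_{z_k})) - F(z_k) \;\le\; \alpha\Big(C\alpha - \tfrac{1}{4\mu}\Big)\|\eta_{z_k}\|^2, \qquad \alpha\in[0,1].
\]
I then set $\bar\alpha := \min\{1, (8\mu C)^{-1}\}$ (with $(8\mu\cdot 0)^{-1} := +\infty$). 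For $0 < \alpha \le \bar\alpha$ one has $C\alpha - \tfrac1{4\mu} \le -\tfrac1{8\mu}$, hence $F(R_{z_k}(\alpha\eta_{z_k})) - F(z_k) \le -\tfrac{\alpha}{8\mu}\|\eta_{z_k}\|^2$, which is the claimed decrease; and since $\sigma \le \tfrac1{8\mu}$ this also yields $F(R_{z_k}(\alpha\eta_{z_k})) \le F(z_k) - \sigma\alpha\|\eta_{z_k}\|^2$, so the exit test of the while loop in Step~\ref{alg:Safeguard:st2} is met. Consequently the backtracking stops after at most $\lceil\log_\nu\bar\alpha\rceil$ reductions, so with $N_{\max}$ large enough the line search never reports failure. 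For the step $\alpha_k$ that is actually accepted: if $\alpha_k = 1$ then $\alpha_k > \nu\bar\alpha$ because $\nu\bar\alpha \le \nu < 1$; otherwise $\alpha_k = \nu^j$ with $j\ge 1$ and the previous trial value $\alpha_k/\nu$ failed the exit test, so by the contrapositive of the implication just established $\alpha_k/\nu > \bar\alpha$, i.e. $\alpha_k > \nu\bar\alpha$. This is the final assertion with $\rho = \nu$.

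The main obstacle is Step one, and specifically the second-order control of the \emph{nonsmooth} term $g(R_x(\alpha\eta)) - g(x+\alpha\eta) = O(\alpha^2\|\eta\|^2)$, which is what lets the line search absorb the penalty $g$; the delicate point is that the constant $C$, and hence $\bar\alpha$, must be uniform in $z_k$, which is exactly where compactness of $\mathcal{F}_v$ enters (to bound $\|\nabla f\|$ and to supply uniform $M_1, M_2$ in~\eqref{e26}--\eqref{e27}). Once that uniform quadratic remainder is in hand the rest is the same algebra as in~\cite[Lemma~5.2]{CMSZ2019}, with~\eqref{e36} replacing the exact-subproblem descent inequality used there, which is why the excerpt defers to that reference for the remaining details.
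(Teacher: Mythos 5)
Your proposal is correct and follows exactly the route the paper intends: the paper omits this proof, stating only that given~\eqref{e36} from Lemma~\ref{le04} the argument is the same as that of~\cite[Lemma~5.2]{CMSZ2019}, and your write-up is a faithful and accurate instantiation of that argument (uniform retraction-smoothness constants from compactness via~\eqref{e26}--\eqref{e27}, Lipschitz control of $g$ along the retraction, cancellation of the $\alpha^2/(2\mu)$ terms against~\eqref{e36}, and the standard backtracking bound $\alpha > \nu\bar\alpha$, correctly identifying the paper's undefined $\rho$ with the shrinking parameter $\nu$).
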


Though the subproblem is solved inexactly, a zero search direction given by $\hat{v}(\Lambda)$ with condition~\eqref{e42} implies that the current iterate $x$ is a stationary point, which coincides with~\cite[Lemma~5.3]{CMSZ2019}.
\begin{lemma} \label{le03}
If $\eta_x = \hat{v}(\Lambda) = 0$, % in Algorithm~\ref{alg:SSN}, 
then $x$ is a stationary point of Problem~\eqref{prob4}.
\end{lemma}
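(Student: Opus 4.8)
The plan is to push the hypothesis $\hat v(\Lambda)=0$ through the inexactness criterion~\eqref{e42}, then through the stationarity system~\eqref{e14}--\eqref{e15} of the Riemannian proximal subproblem, and finally through the optimality condition of the Euclidean proximal mapping~\eqref{e16}, so as to exhibit a subgradient $\zeta\in\partial g(x)$ with $\grad f(x)+P_{\T_x\mathcal{M}}\zeta=0$, which is exactly the stationarity condition for~\eqref{prob4} used in~\cite[Lemma~5.3]{CMSZ2019}.

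First I would substitute $\hat v(\Lambda)=0$ into the right-hand side of~\eqref{e42}; it becomes $\sqrt{4\mu^2L_g^2}-2\mu L_g=0$, so the criterion forces $\Psi(\Lambda)=B_x^Tv(\Lambda)=0$. Write $\epsilon$ for this quantity, so $\epsilon=0$. Since the basis $\mathcal{B}_X$ of Lemma~\ref{le05} is orthonormal, $B_x^TB_x$ is the identity and $B_xB_x^T=P_{\N_x\mathcal{F}_v}$, hence $v(\Lambda)=P_{\T_x\mathcal{M}}v(\Lambda)+B_xB_x^Tv(\Lambda)=\hat v(\Lambda)+B_x\epsilon=0$. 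By the definition~\eqref{e14} of $v(\Lambda)$ this means $\Prox_{\mu g}\!\left(x-\mu(\xi_x-B_x\Lambda)\right)=x$, where $\xi_x=\grad f(x)$.

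Next I would apply the first-order optimality condition of the convex problem~\eqref{e16} at its minimizer $x$ with $z=x-\mu(\xi_x-B_x\Lambda)$, namely $0\in(x-z)+\mu\partial g(x)$, which simplifies to $\zeta:=B_x\Lambda-\xi_x\in\partial g(x)$. Finally, because $B_x\Lambda\in\N_x\mathcal{F}_v$ and $\xi_x=\grad f(x)\in\T_x\mathcal{M}$, projecting $\zeta$ onto $\T_x\mathcal{M}$ yields $P_{\T_x\mathcal{M}}\zeta=P_{\T_x\mathcal{M}}(B_x\Lambda)-\xi_x=-\grad f(x)$, hence $\grad f(x)+P_{\T_x\mathcal{M}}\zeta=0$ with $\zeta\in\partial g(x)$; equivalently $0\in\grad f(x)+P_{\T_x\mathcal{M}}(\partial g(x))$, so $x$ is a stationary point of~\eqref{prob4}. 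The argument is a chain of substitutions, so there is no serious obstacle; the only point demanding care is the bookkeeping around $B_x$ --- it is Lemma~\ref{le05} that guarantees $B_x$ maps onto $\N_x\mathcal{F}_v$ and is an isometry, which is what legitimizes both the splitting $v(\Lambda)=\hat v(\Lambda)+B_x\epsilon$ and the identity $P_{\T_x\mathcal{M}}(B_x\Lambda)=0$.
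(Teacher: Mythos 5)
Your argument is correct and follows essentially the same route as the paper: both proofs observe that $\hat{v}(\Lambda)=0$ collapses the right-hand side of~\eqref{e42} to zero, forcing $\Psi(\Lambda)=0$ and hence that the subproblem~\eqref{e11} is solved exactly with solution $\eta_x^*=v(\Lambda)=0$. The only difference is that where the paper then cites~\cite[Lemma~5.3]{CMSZ2019}, you inline that lemma's content by working through the proximal optimality condition to exhibit $\zeta=B_x\Lambda-\xi_x\in\partial g(x)$ with $P_{\T_x\mathcal{M}}\zeta=-\grad f(x)$, which is a correct and self-contained way to finish.
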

\begin{proof}
If $\eta_x = \hat{v}(\Lambda) = 0$, then by~\eqref{e42}, %by the condition in Step~\ref{alg:SSN:st0} of 	Algorithm~\ref{alg:SSN}, 
we have that $\Psi(\Lambda) = 0$, which implies that the subproblem~\eqref{e11} is solved exactly and $\eta_x^* = 0$. By~\cite[Lemma~5.3]{CMSZ2019}, $x$ is a stationary point of Problem~\ref{prob4}.
\end{proof}

The main convergence result is given in Theorem~\ref{thm:convergence}. The proof follows the spirit of~\cite[Theorem~1]{HuaWei2019}. Here, we only highlight their differences.
\begin{theorem} \label{thm:convergence}
Suppose Assumption~\ref{as01} holds, then any accumulation point of the sequence $\{z_{0}, z_{N}, z_{2N},$ $\ldots, z_{iN}, \ldots\}$ generated by Algorithm~\ref{alg:AManPG} is a stationary point, i.e., if $z_*$ is an accumulation point of the above sequence,  then $0 \in P_{\T_{z_*} \mathcal{M} } \partial F(z_*)$.
\end{theorem}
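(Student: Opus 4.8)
The plan is to reduce the statement to the ``safeguard'' subsequence of compared iterates $\{z_{iN}\}_{i\ge 0}$ and to proceed in three steps: (i) a uniform sufficient decrease of $F$ along $\{z_{iN}\}$; (ii) summability of $\{\|\eta_{z_{iN}}\|^2\}$, hence $\eta_{z_{iN}}\to 0$; (iii) a limiting-stationarity argument at an accumulation point, which is where the inexactness has to be controlled.

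First I would establish $F(z_{(i+1)N}) \le F(z_{iN}) - \frac{\rho\bar\alpha}{8\mu}\|\eta_{z_{iN}}\|^2$ for all $i$, where $\rho,\bar\alpha$ are the constants of Lemma~\ref{le02}. At an iteration $k=iN$ the backtracking loop of Algorithm~\ref{alg:Safeguard} returns, by Lemma~\ref{le02}, a point $R_{z_k}(\alpha\eta_{z_k})$ with $F(R_{z_k}(\alpha\eta_{z_k})) \le F(z_k) - \frac{\alpha}{8\mu}\|\eta_{z_k}\|^2 \le F(z_k) - \frac{\rho\bar\alpha}{8\mu}\|\eta_{z_k}\|^2$. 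If the safeguard takes effect (the test at Step~\ref{alg:Safeguard:st3} holds) then $z_{(i+1)N}=R_{z_k}(\alpha\eta_{z_k})$ and the bound is immediate; otherwise the test fails, so $F(x_k)\le F(R_{z_k}(\alpha\eta_{z_k}))$ and $z_{(i+1)N}=x_k$, and the same bound follows. Since $\mathcal{M}$ is compact (Theorem~\ref{thm:Feasible_manifold}) and $F$ is continuous, $F$ is bounded below on $\mathcal{M}$; hence $\{F(z_{iN})\}$ is nonincreasing and bounded below, so it converges, and telescoping the decrease inequality gives $\sum_{i\ge 0}\|\eta_{z_{iN}}\|^2<\infty$, whence $\|\eta_{z_{iN}}\|\to 0$.

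Now let $z_*$ be an accumulation point, $z_{i_jN}\to z_*$; write $z_j=z_{i_jN}$, $\eta_j=\eta_{z_j}=\hat v(\Lambda_j)$, $\xi_j=\grad f(z_j)$. Since $\eta_j\to 0$, the accuracy condition~\eqref{e42} forces $\|\Psi(\Lambda_j)\|\le\sqrt{4\mu^2 L_g^2+\|\eta_j\|^2/2}-2\mu L_g\to 0$; because $v(\Lambda_j)=\hat v(\Lambda_j)+B_{z_j}\Psi(\Lambda_j)$ with $B_{z_j}$ an isometry onto $\N_{z_j}\mathcal{M}$, also $v(\Lambda_j)\to 0$, so $w_j:=\Prox_{\mu g}(u_j)=z_j+v(\Lambda_j)\to z_*$ where $u_j:=z_j-\mu(\xi_j-B_{z_j}\Lambda_j)$. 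The characterization of the proximal map of the convex function $\mu g$ gives $u_j-w_j=\mu s_j$ with $s_j\in\partial g(w_j)$, and $\|s_j\|\le L_g$ by Assumption~\ref{as01}; hence $\{u_j\}$ is bounded, and therefore so is $\{B_{z_j}\Lambda_j\}=\{\frac{1}{\mu}(u_j-z_j)+\xi_j\}$. Passing to a further subsequence, $B_{z_j}\Lambda_j\to\zeta$; since each $B_{z_j}\Lambda_j\in\N_{z_j}\mathcal{M}$ and $x\mapsto P_{\N_x}$ is continuous (by its closed form), $\zeta\in\N_{z_*}\mathcal{M}$, so $P_{\T_{z_*}\mathcal{M}}\zeta=0$. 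Also $s_j=\frac{1}{\mu}(u_j-w_j)\to\zeta-\grad f(z_*)$ and $w_j\to z_*$, so the closed graph of $\partial g$ yields $\zeta-\grad f(z_*)\in\partial g(z_*)$, i.e. $\zeta=\grad f(z_*)+s$ for some $s\in\partial g(z_*)$. Projecting onto $\T_{z_*}\mathcal{M}$ and using $P_{\T_{z_*}\mathcal{M}}\grad f(z_*)=\grad f(z_*)$ and $P_{\T_{z_*}\mathcal{M}}\zeta=0$ gives $P_{\T_{z_*}\mathcal{M}}s=-\grad f(z_*)$, so $0=\grad f(z_*)+P_{\T_{z_*}\mathcal{M}}s\in P_{\T_{z_*}\mathcal{M}}(\nabla f(z_*)+\partial g(z_*))=P_{\T_{z_*}\mathcal{M}}\partial F(z_*)$, which is the claimed stationarity; the case where $\hat v(\Lambda_j)=0$ exactly is included, and there Lemma~\ref{le03} already identifies $z_j$ as stationary.

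The step (i) case analysis and the step (ii) telescoping are routine. The main obstacle is the limiting argument: obtaining a \emph{uniform} bound on the multiplier images $B_{z_j}\Lambda_j$ — which I get only indirectly, via the proximal-map identity and the Lipschitz bound on $\partial g$, since the prox argument $u_j$ itself contains $\Lambda_j$ — and then legitimately interchanging limits, which relies on outer semicontinuity (closed graph) of $\partial g$, continuity of $\grad f$ on the compact $\mathcal{M}$, continuous dependence of $\N_x\mathcal{M}$ on $x$, and the exact sum rule $\partial F=\nabla f+\partial g$.
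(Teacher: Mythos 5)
Your proposal is correct, and its first two stages coincide with the paper's proof: the sufficient decrease $F(z_{(i+1)N})\le F(z_{iN})-\frac{\rho\bar\alpha}{8\mu}\|\eta_{z_{iN}}\|^2$ obtained from the safeguard test together with Lemma~\ref{le02} (your explicit two-case analysis of whether the safeguard takes effect is exactly what the paper compresses into one line), the telescoping argument using compactness of $\mathcal{F}_v$ to conclude $\eta_{z_{iN}}\to 0$, and then $\Psi(\Lambda)\to 0$ via~\eqref{e42} and $v(\Lambda)\to 0$ via $v(\Lambda)=\hat v(\Lambda)+B_x\Psi(\Lambda)$ are all identical to the paper's~\eqref{e30}--\eqref{e34}. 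Where you genuinely diverge is the final stationarity step. The paper stops at the variational characterization~\eqref{e35} of $v(\Lambda_{\tilde z_i})$ as the minimizer of $\ell_x$ over the perturbed affine set $\mathcal{S}_{\tilde z_i}$ and defers the rest to ``following the steps in the proof of~\cite[Theorem~1]{HuaWei2019}.'' You instead carry the limit out explicitly from the prox fixed-point identity~\eqref{e14}: the inclusion $u_j-w_j\in\mu\,\partial g(w_j)$ plus the $L_g$-Lipschitz bound on subgradients gives boundedness of $B_{z_j}\Lambda_j$, a convergent sub-subsequence lands in $\N_{z_*}\mathcal{M}$ by continuity of the normal projection, and outer semicontinuity of $\partial g$ together with continuity of $\grad f$ closes the argument. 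This buys a self-contained proof of the one step the paper outsources, at the cost of a slightly longer argument; both routes exploit the same mechanism (the KKT system~\eqref{e12}--\eqref{e13} with vanishing residual $\epsilon$), so the difference is one of completeness rather than of strategy. The only caveats, which apply equally to the paper's version, are that the argument tacitly assumes the backtracking loop in Algorithm~\ref{alg:Safeguard} does not hit the $N_{\max}$ cap, and that $\rho$ in Lemma~\ref{le02} should be read as the shrinking parameter $\nu$.
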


\begin{proof}
Since the subscript of $z_k$ in Algorithm~\ref{alg:AManPG} is a multiple of $N$, we use $\{\tilde{z}_i\}$ to denote $\{z_k\}$, where $\tilde{z}_i = z_{i N}$. Let $(\eta_{\tilde{z}_i},\Lambda_{\tilde{z}_i})$ denote the output of the Semi-smooth Newton algorithm when the input is $(\tilde{z}_i, \Lambda_{\tilde{z}_{i-1}}, \grad f(\tilde{z}_i) )$, i.e., the input and output of Step~\ref{alg:Safeguard:st1} of Algorithm~\ref{alg:Safeguard}. 

By the safeguard in Algorithm~\ref{alg:Safeguard} and Lemma~\ref{le02}, we have
$
F(\tilde{z}_{i+1}) - F(\tilde{z}_i)	\leq - \frac{\rho \bar{\alpha}}{8 \mu} \| \eta_{\tilde{z}_k} \|^2.
$
Since $F$ is continuous and $\mathcal{F}$ is compact, the function $F$ is bounded from below. It follows that
\begin{equation}
\infty > \sum_{i = 0}^\infty	 F(\tilde{z}_{i}) - F(\tilde{z}_{i+1}) \geq \frac{\rho \bar{\alpha}}{8 \mu} \| \eta_{\tilde{z}_k} \|^2,
\end{equation}
which implies
\begin{equation} \label{e30}
\lim_{i \rightarrow \infty} \| \eta_{\tilde{z}_k} \| = 0.
\end{equation}
By~\eqref{e30}, ~\eqref{e42}, and  $\eta_{\tilde{z}_k} = \hat{v}(\tilde{z}_i)$, we have
\begin{equation} \label{e31}
\lim_{i \rightarrow \infty} \| \Psi( \Lambda_{\tilde{z}_i} ) \| = 0.
\end{equation}
Since $\hat{v}(\Lambda_{\tilde{z}_i}) = P_{\T_{\tilde{z}_i} \mathcal{M} } v(\Lambda_{\tilde{z}_i}) = v(\Lambda_{\tilde{z}_i}) - B_{\tilde{z}_i} \Psi( \Lambda_{\tilde{z}_i} ) $, we have
\begin{align} \label{e32}
\|v( \Lambda_{\tilde{z}_i} )\| = \| \hat{v}(\Lambda_{\tilde{z}_i}) + B_{\tilde{z}_i} \Psi( \Lambda_{\tilde{z}_i} )\| \leq \|\hat{v}(\Lambda_{\tilde{z}_i})\| + \|B_{\tilde{z}_i} \Psi( \Lambda_{\tilde{z}_i} )\| = \| \eta_{\tilde{z}_k} \| + \|\Psi( \Lambda_{\tilde{z}_i} )\|.
\end{align}
Combining~\eqref{e30},~\eqref{e31} and~\eqref{e32} yields 
\begin{equation} \label{e34}
\lim_{i \rightarrow \infty} \|v( \Lambda_{\tilde{z}_i} )\| = 0.
\end{equation}
By~\eqref{e33}, we have
\begin{equation} \label{e35}
v( \Lambda_{\tilde{z}_i} ) = \argmin_{\eta \in \mathcal{S}_{\tilde{z}_i} } \ell_x(\eta) = \langle \grad f( \tilde{z}_{i} ), \eta \rangle+\frac{1}{2\mu}\langle \eta, \eta\rangle + g(\tilde{z}_i+\eta),
\end{equation}
where $\mathcal{S}_{\tilde{z}_i} = \{v : B_{\tilde{z}_i}^T v = B_{\tilde{z}_i}^T v( \Lambda_{\tilde{z}_i} ) \}$.
Using~\eqref{e34} and~\eqref{e35} and following the steps in the proof of~\cite[Theorem~1]{HuaWei2019}, we have that any accumulation point of $\{\tilde{z}_i\}$ is a stationary point.
\end{proof}

%\section{A Practical Algorithm with updating the regularizer coefficient $\lambda$}

\section{Numerical Experiments} \label{sect:NumExp}

In this section, %we propose a few practical modifications to Algorithm~\ref{alg:AManPG}. The practical versions of exact and inexact AManPG are compared. T
the performance of the proposed method, AManPG, is compared to other state-of-the-art methods using problems from community detection in networks and normalized cut for image segmentation. %For more numerical comparisons, we refer to~\cite{HWGV2022} for more details.

% are used to demonstrate the performance of the proposed framework.

%\subsection{Practical modifications to AManPG} \label{sect:Modifications}
%
%The condition~\eqref{e29} may be too conservative for global convergence. We propose to dynamically decrease the tolerance for the while loop in Step~\ref{alg:SSN:st0} of Algorithm~\ref{alg:SSN}. We first set a global variable $\epsilon_{\Psi} > 0$. Then Step~\ref{alg:SSN:st0} of Algorithm~\ref{alg:SSN} is modified to be
%\begin{algorithmic}%[1]
%\State $\qquad \qquad$ \textbf{while} $\Psi(\Lambda) > \epsilon_{\Psi}$ \textbf{do}
%\end{algorithmic}
%and Step~\ref{alg:Safeguard:st6} of Algorithm~\ref{alg:Safeguard} is modified to be
%\begin{algorithmic}%[1]
%\State $\qquad \qquad$  $\epsilon_{\Psi} \gets \max(10^{-1} \times \epsilon_{\Psi}, 10^{-10})$, and goto Step~\ref{alg:Safeguard:st1} of Algorithm~\ref{alg:Safeguard};
%\end{algorithmic}
%where $10^{-10}$ is used to avoid numerical issues. If the search direction $\eta_{z_k}$ in Step~\ref{alg:Safeguard:st1} of Algorithm~\ref{alg:Safeguard} is not descent in the sense that $i_{\mathrm{iter}} = N_{\max}$, then $\epsilon_{\Psi}$ will decrease until it is smaller than $\sqrt{4 \mu^2 L_g^2 + \|\hat{v}(\Lambda)\|^2 / 2} - 2 \mu L_g$. It follows that~\eqref{e29} holds and the search direction $\eta_{z_k}$ resets to be descent by~\eqref{e36}.

%\whcomm{[Possible convergence rate analysis]}{}

\subsection{Parameter setting and testing environment} \label{sect:para}

%We compare AManPG with low accurate Riemannian proximal mapping 

%\whcomm{}{
The subproblem in Algorithm~\ref{alg:AManPG} is solved by the regualized semi-smooth Newton algorithm in~\cite{XLWZ2018}. %}
Let I-AManPG denote Algorithm~\ref{alg:AManPG} and let E-AManPG denote Algorithm~\ref{alg:AManPG} with the condition that $\|\Psi(\Lambda)\| \leq 10^{-10}$. 
% that replaces Step~\ref{alg:SSN:st0} of Algorithm~\ref{alg:SSN} by
%\begin{algorithmic}%[1]
%\State $\qquad \qquad$ \textbf{while} $\Psi(\Lambda) > 10^{-10}$ \textbf{do}
%\end{algorithmic}
Therefore, we can view I-AManPG as an inexact Riemannian proximal gradient method while E-AManPG is essentially an exact one since E-AManPG solves the Riemannian proximal mapping to high accuracy.

The parameters $L_f$ and $\lambda$ are problem-dependent and specified later. The other parameters are set to be $\mu = 1 / L_f$, $\sigma = 10^{-4}$, $\nu = 0.5$, $N = 5$, $\Lambda_y = \Lambda_z = 0$, $N_{\max} = 5$, $\nu_{\mathrm{SSN}} = 0.9999$, $\beta = 0.1$, $\kappa_1 = 0.2$, $\kappa_2 = 0.75$, $\gamma_1 = 2$, $\gamma_2 = 5$, $\underline{\lambda} = 10^{-5}$, and the initial value $\epsilon_{\Psi} = 1$.

Unless otherwise indicated, I-AManPG and E-AManPG stop if the value of $\|\eta_{z_k}\|$ reduces at least by a factor of $10^{3}$.  %\whcomm{}{
Inspired by Lemma~\ref{le07},
the last iterate is projected to the set $\mathcal{B}_{v}$ by the %projection 
mapping $P_{\mathcal{B}_{v}} (X)$.
%\begin{equation*}
%P_{\mathcal{A}_{v}} (X) = \mathrm{diag}(v) P_{\mathcal{A}_{\mathbf{1}_n}} ( \mathrm{diag}(v)^{-1} X)
%\end{equation*}
%where 
%$
%P_{\mathcal{A}_{\mathbf{1}_n}} (X) =
%\begin{bmatrix} \frac{b_1}{\|b_1\|} & \frac{b_2}{\|b_2\|} & \ldots & \frac{b_q}{\|b_q\|} \end{bmatrix},
%$
%$b_j \in \mathbb{R}^n$ for $j = 1, 2, \ldots, q$, and
%\[
%(b_j)_i = 
%\left\{
%\begin{array}{cc}
%1 & \hbox{if $X_{i j}$ has the largest magnitude in the $i$-th row;\footnotemark} \\
%0 & \hbox{otherwise.}
%\end{array}
%\right.
%\]
%\footnotetext{If there is a tie in the $i$-th row, then $j$ can be any one with the largest magtinude.}
%If the $i$-th row $j$-th column of $P_{\mathcal{B}_{v}} (X)$ is not zero, then this implies that the $i$-th object is in the $j$-th cluster.
%}

%\whcomm{[Give the definitions of NMI, AMI, and purity.]}{}

%\whcomm{[Check later]}{} in Matlab R2019a   CHECKED WITH MENG 2-26-22 added version below
I-AManPG and E-AManPG are implemented in Matlab R2019b. All the experiments are performed on an Apple Mac platform with 1.4 GHz Quad-Core Intel Core i5. %2.7GHz CPU (Intel Core i7). The computational time for a particular network or image is the average of 10 runs of the method executed in a manner that mitigates timing variability due to system activity including virtual memory startup effects.%The source code can be downloaded at \whcomm{TODO}{}.

\subsection{Community detection}

In this section, we evaluate the performance of community detection by optimizing the formulation
\begin{equation} \label{e40}
\min_{X \in \mathcal{F}_{\mathbf{1}_n}} - \mathrm{trace}(X^T M X) + \lambda \|X\|_1,
\end{equation}
with I-AManPG algorithm. 

\subsubsection{Data sets}

Results are presented for solving community detection problems on synthetic LFR benchmark networks~\cite{lancichinetti2008benchmark}. Specifically, LFR benchmark networks assume that the distributions of degree and community size are power laws with exponents $\tau_1$ and $\tau_2$ respectively. Each node shares a fraction $1-\mu_{\mathrm{LFR}}$ of its edges with the other nodes of its community and a fraction $\mu_{\mathrm{LFR}}$ with nodes of the other communities, where $0 \leq \mu_{\mathrm{LFR}} \leq 1$ is the mixing parameter.  A software package to generate the benchmark networks is available at \url{https://www.santofortunato.net/resources}.

\subsubsection{Comparison of I-AManPG and E-AManPG on LFR benchmark networks}

%compare the efficiency of I-AManPG and E-AManPG method on detecting community structures of LFR benchmark networks~\cite{lancichinetti2008benchmark}. 
The first set of experiments compares the efficiency and effectiveness of I-AManPG and E-AManPG to demonstrate the utility of the inexact form of the algorithm.
Throughout this section, the parameters $\tau_1$, $\tau_2$, and $\mu_{\mathrm{LFR}}$ are set to $-2$, $-1$, and $0.1$ respectively.
Four sets of other parameters are used to define a range of networks. They are as follows:
\begin{itemize}
	\item $N = 500$, $d_{ave} = 10$, $d_{max} = 20$, $N_c = 50$, $n_c = 10$;
%	\item $N = 500$, $d_{ave} = 10$, $d_{max} = 20$, $N_c = 25$, $n_c = 20$;
	\item $N = 1000$, $d_{ave} = 20$, $d_{max} = 40$, $N_c = 100$, $n_c = 10$;
%	\item $N = 1000$, $d_{ave} = 20$, $d_{max} = 40$, $N_c = 50$, $n_c = 20$;
	\item $N = 5000$, $d_{ave} = 40$, $d_{max} = 80$, $N_c = 500$, $n_c = 10$;
%	\item $N = 5000$, $d_{ave} = 50$, $d_{max} = 80$, $N_c = 250$, $n_c = 20$;
	\item $N = 10000$, $d_{ave} = 40$, $d_{max} = 80$, $N_c = 1000$, $n_c = 10$;
%	\item $N = 10000$, $d_{ave} = 50$, $d_{max} = 80$, $N_c = 500$, $n_c = 20$;
\end{itemize}
where $N$ denotes the number of nodes, $d_{ave}$ denotes the average node degree, $d_{max}$ denotes the maximum node degree, $N_c$ denotes the number of nodes that all communities have, and $n_c$ denotes the number of communities. The balancing parameter $\lambda$ in~\eqref{e40} is set to $0.3$. This value roughly balances the two terms of the cost function for the LFR benchmark networks defined by the parameters sets above.

%\whcomm{[KG this table is identical to the version in the dissertation draft of 7/18/22 and the caption has been simplified with the information moved to the text]}{}
%\whcomm{[KG note i removed the (n,q) as it was confusingly redundant]}{}
\begin{table}[ht!]
\renewcommand\tabcolsep{2.0pt}
\begin{center}
\caption{Comparison of the efficiency of I-AManPG and E-AManPG.}
\label{table1}
\vspace{0.25cm}
%\makegapedcells
%\setcellgapes{3pt}
%\resizebox{\textwidth}{!}
\small{ 
\begin{tabular}{c|cccccccc}
%  \hline
%  & \multicolumn{4}{c|}{P-IRPG-G} & \multicolumn{4}{c}{LADM} \\
  \hline
  \hline
 & I-A & E-A & I-A & E-A & I-A & E-A & I-A & E-A \\
  \hline
$(N, n_c)$ & \multicolumn{2}{c}{$(500, 10)$} & \multicolumn{2}{c}{$(1000, 10)$} & \multicolumn{2}{c}{$(5000, 10)$} & \multicolumn{2}{c}{$(10000, 10)$} \\
  \hline
iter   & 64 & 52 & 50 & 59 & 63 & 58 & 55 & 55 \\
SSNiter  & 28 & 212 & 13 & 248 & 34 & 311 & 52 & 330 \\
nf & 143 & 115 & 112 & 131 & 140 & 128 & 123 & 122\\
ng  & 83 & 65 & 64 & 73 & 81 & 72 & 71 & 68 \\
nR  & 142 & 114 & 111 & 130 & 139 & 127 & 122 & 121\\
nSG & 4 & 14 & 2 & 15 & 4 & 13 & 3 & 10 \\
$F$ &-67.00 & -67.0 & -149 & -149 & -284 & -284 & -251 & -251 \\
$\frac{\|\eta_{z_k}\|}{\|\eta_{z_0}\|}$ & $7.0_{-4}$  & $5.7_{-4}$  & $5.5_{-4}$  & $5.1_{-4}$  & $6.3_{-4}$  & $5.8_{-4}$  & $5.2_{-4}$  & $6.9_{-4}$ \\
time &  0.15 & 0.31 & 0.17 & 0.75 & 0.84 & 3.03 & 1.54 & 5.19\\
\hline
\hline
\end{tabular}
}
\end{center}
\end{table}

Table~\ref{table1} contains the data to compare I-AManPG and E-AManPG for LFR benchmark networks
For each of the four LFR parameter sets, 10 random networks were generated. The computational time for each network was computed as described earlier and the quantities reported in the table are the averages over the 10 networks from each parameter set.
I-A denotes I-AManPG and E-A denotes E-AManPG. The LFR parameter set used for each pair of columns is identified by values $(N, n_c)$.
The labels iter, SSNiter, nf, ng, nR, nSG, F, $\frac{\|\eta_{z_k}\|}{\|\eta_{z_0}\|}$, and time, respectively, denote the number of iterations in AManPG, the number of iterations in semi-smooth Newton method, the number of function evaluations, the number of gradient evaluations, the number of retraction evaluations,  the number of safeguards (Step~\ref{alg:Safeguard:st3}) that are taken, the function value at the final iterate, the reduction of the norm of search directions $\frac{\|\eta_{z_k}\|}{\|\eta_{z_0}\|}$, and the computational time in seconds. The subscript $k$ denotes a scale of $10^{k}$.

As shown in Table~\ref{table1}, I-AManPG and E-AManPG find the same solutions in the sense that their function values are the same up to three significant digits.  In fact, though not reported in the table, in our experimental setting, both I-AManPG and E-AManPG always converge to the same solution which always represents the ground truth partition. So both are equally effective. In addition, I-AManPG requires less work when solving the Riemannian proximal mapping in the sense that the number of semi-smooth Newton iterations in each outer iteration is small compared to E-AManPG.  
Moreover, less accuracy for solving the Riemannian proximal mapping does not influence the number of outer iterations significantly. Therefore, I-AManPG is more efficient than E-AManPG in terms of computational time. For the rest of the experiments, we use I-AManPG as the representative method.

%From Table \ref{table1}, we can find that E-AManPG and I-AManPG can detect the ground-truth partition for each case as expected when the mixing parameter of LFR is 0.1. 
%Moreover, I-AManPG is faster than E-AManPG under the same accuracy  for the quality measurements NMI, AMI, Modularity and Purity  and the cost function $F$ for these 4 LFR benchmark networks. E-AManPG runs more inner-iterations, like SSN iteration and CGiteration. The speed will become slower as the network and the number of clusters become larger. In the remaining numerical experiments of this paper, we will only use I-AManPG to compare with other state-of-the-art algorithms. 

\subsubsection{Comparison of the effectiveness of I-AManPG and state-of-the-art community detection methods}

In this section, community detection by the optimization model~\eqref{e40} using I-AManPG is compared to three state-of-the-art methods: Danon et al.'s algorithm~\cite{danon2006effect}, the Louvain method~\cite{blondel2008fast} and Newman's spectral optimization method~\cite{newman2006modularity}. %Newman-Girvan modularity $Q$, originally introduced to define a stopping criterion for the algorithm of Girvan and Newman~\cite{newman2004finding}, has rapidly become an essential element of many clustering methods. Modularity is by far the most used and best known quality function. In this section, we do not compare the computational time and only compare the effectiveness for each algorithm.

These algorithms all aim to maximize the modularity $Q=\frac{1}{2m}\tr(X^TMX)$ over the set of indicator matrices, where $m$ is the number of edges, and an indicator matrix $X$ is defined by $X \geq 0, X^TX$ is diagonal, and each row of $X$ has single entry with value one. Each indicator matrix specifies a partitioning of the nodes into communities. 
%Both Danon et al.'s algorithm and the Louvain method are agglomerative hierarchical clustering methods.  
Danon et al.'s algorithm is a agglomerative method that is a variant of Newman's fast greedy method~\cite{newman2004fast} which starts with each node as a singleton community. Intra-community edges are added one-by-one by choosing the edge such that the modified partition gives the maximum increase of modularity with respect to the previous configuration. 
%The computation of the variation $\Delta Q$ of modularity is normalized in Danon et al.'s algorithm.  
The Louvain method is an agglomerative method where each iteration comprises two phases. The first phase creates intermediate-communities by merging pairs of nodes such that the modularity increases. The first phase terminates when no such pair merging increases modularity. In the second phase, a smaller graph, called the reduced graph, is created where each node in this graph represents an intermediate-community. Newman's spectral optimization method is a divisive method that computes the eigenvectors of the modularity matrix $M$ corresponding to the largest positive eigenvalues. The nodes are grouped into two parts based on the signs of the component of the eigenvectors. The process is then repeated for each of the parts %using the generalized modularity matrix $M^{(g)}$, where $M_{ij}^{(g)}=M_{ij}-\delta_{ij}\sum_{k\in \text{ group } g}M_{ik}$ 
until splitting a given community of nodes makes a zero or negative contribution to the total modularity.  

To make fair comparisons, we use publicly-available Matlab implementations of these algorithms.
The codes for Danon et al.'s algorithm, Louvain's algorithm, and Newman's spectral method are respectively from~\cite{AKehagiascdtoolbox}, ~\cite{matlab_Louvain_AS} and~\cite{matlab_network_tools_GB}. Note that the codes for Newman's spectral method in \cite{matlab_network_tools_GB} do not embed the fine-tuning stage and use a different stopping criteria as in paper \cite{newman2006modularity}. We modified the stopping criteria in the codes such that it has the same stopping criteria as in paper \cite{newman2006modularity}. The codes in \cite{matlab_network_tools_GB} use dense matrix computations, and we modified them with significantly more efficient sparse computations. In this way, the computational efficiency was improved to the point where it produced times that were reasonable to include in these comparisons.
%The codes for Danon et al.'s algorithm, Louvain's algorithm, and Newman's spectral method are respectively from~\cite{AKehagiascdtoolbox}, ~\cite{matlab_Louvain_AS} and~\cite{matlab_network_tools_GB}. For our experiments we modified the code for  Newman's spectral method in \cite{matlab_network_tools_GB}
%to use the same, and in our experience superior, stopping criteria as the code in \cite{newman2006modularity}.  Neither the code in \cite{matlab_network_tools_GB} nor our modified version include the Kernighan-Lin-inspired fine-tuning stage used in \cite{newman2006modularity}.

To compare the effectiveness of the four methods, we consider three quality measurements: normalized mutual information (NMI)~\cite{danon2005comparing}, adjusted mutual information (AMI) \cite{vinh2010information}, and  purity~{\cite{manning2008utze}.} NMI is a similarity measure between two partitions that represents their normalized mutual entropy. AMI further corrects the measure for randomness by adopting a hypergeometric model of randomness. We refer interested readers to~\cite[(2)]{danon2005comparing} and~\cite[Section~4.1]{vinh2010information} for the definitions. Both NMI and AMI take on values between 0 and 1. Values closer 1 indicate greater consistency between the partitions. 
Given two partitions $X$ and $Y$ of $N$ nodes, the purity is given by
$
\mbox{purity}( X, Y ) = \frac{1}{N} \sum_k \max_j \vert X_k \cap Y_j\vert,
$
where $X_k$ denotes the set of nodes in $k$-th community of partition $X$, and likewise for $Y_j$, and $\vert X_k \cap Y_j\vert$ denotes the number of nodes in $X_k \cap Y_j$. The value of purity is also between 0 and 1.  
%Even though we cannot use purity to trade off the quality of the clustering against the number of clusters,  we can use it to measure whether two partitions are nested or not. 
The closer it is to one, the better the two partitions are nested. 
%The purity is computed between the computed partition and the ground-truth partition in our experiments.
In our numerical experiments, the ground truth is known and therefore the computed partition is compared to the ground truth. When the two partitionings do not have same number of communities good nesting of the partition with the larger number of communities in the partition with the smaller number is an indication that further division or agglomeration could yield a closer approximation of ground truth. Such a pair of partitions is therefore preferred to a pair with lower purity. Since purity is not symmetric, we take $X$ to be the partition with the larger number of communities.

%%To compute purity, each cluster is assigned to the class which is most frequent in the cluster, and then the accuracy of this assignment is measured by counting the number of correctly assigned documents and dividing by $N$. Formally:
%\begin{displaymath} \mbox{purity}( \Omega,\mathbb{C} ) = \frac{1}{N} \sum_k \max_j \vert\omega_k \cap c_j\vert ,\end{displaymath} 
%where $\Omega = \{ \omega_1, \omega_2, \ldots, \omega_K \}$is the set of clusters and $\mathbb{C} = \{ c_1,c_2,\ldots,c_J \}$ is the set of classes. We interpret $\omega_k$ as the set of nodes in $\omega_k$ and $c_j$ as the set of nodes in $c_j$. The results are shown in Table \ref{table1}.

For LFR benchmark networks used in the comparisons in this section,  the parameters $\tau_1$, $\tau_2$, $N$, $d_{ave}$, $d_{max}$, $N_c$, and $n_c$ are respectively set to $-2$, $-1$, $1000$, $20$, $40$, $50$, and $20$. The value of $\lambda$ in~\eqref{e40} is 0.3, as in the previous set of experiments. The empirical results with multiple values of $\mu_{\mathrm{LFR}}$ are reported in Table~\ref{table2}. As before, each result is an average over 10 randomly selected LFR benchmark networks.

\begin{table}[htbp]
\renewcommand\tabcolsep{2.0pt}
\begin{center}
%% \whcomm{[KG caption moved into tex]}{}
\caption{Compare the effectiveness of I-AManPG to other state-of-the-art methods. } \label{table2}
\resizebox{\textwidth}{!}
{
\begin{tabular}{l |c |c| c |c |c |c| c| c|c|c}
\hline
\hline
\multirow{2}{*}{ } & \multirow{2}{*}{} & \multicolumn{9}{c}{$\mu_{\mathrm{LFR}}$}\\
\cline{3-11}
&  & 0 & 0.1 & 0.2 & 0.3 & 0.4 & 0.5 & 0.6 & 0.7 &  0.8\\
 \hline
 \multirow{5}{*}{Danon } & NMI &  0.9998 & 0.9891 & 0.9394 & 0.8504 & 0.7331 & 0.5808 & 0.3781 & 0.1412 & 0.0548\\
 & AMI &  0.9998 & 0.9870 & 0.9166 & 0.7922 & 0.6399 & 0.4736 & 0.2878 & 0.0935 & 0.0215\\
 & Mod. &  0.9496 & 0.8436 & 0.7225 & 0.5920 & 0.4687 & 0.3452 & 0.2458 & 0.1892 & 0.1814\\
 & purity &  0.9999 & 0.9938 & 0.9739 & 0.9414 & 0.9058 & 0.8344 & 0.6886 & 0.4269 & 0.3095\\
 & time &  2.8304 & 2.8597 & 2.8199 & 2.7648 & 2.7287 & 2.8318 & 2.7739 & 2.7625 & 2.7318\\
 & $q_c$ & 20 & 20 & 18 & 15 & 11 & 9 & 7 & 7 & 8\\
 \hline
  \multirow{5}{*}{Danon\_force\_q} & NMI &  0.9998 & 0.9889 & 0.9397 & 0.8494 & 0.7309 & 0.5794 & 0.3939 & 0.1778 & 0.0954\\
& AMI &  0.9998 & 0.9870 & 0.9201 & 0.7962 & 0.6415 & 0.4727 & 0.2950 & 0.1035 & 0.0282\\
 & Mod. &  0.9496 & 0.8433 & 0.7209 & 0.5868 & 0.4615 & 0.3394 & 0.2431 & 0.1872 & 0.1794\\
& purity & 0.9999 & 0.9935 & 0.9714 & 0.9349 & 0.8954 & 0.8198 & 0.6778 & 0.4201 & 0.3065\\
& time & 2.8353 & 2.9606 & 2.8222 & 2.8068 & 2.8055 & 2.8959 & 2.8183 & 2.8459 & 2.7432\\
& $q_c$ & 20 & 20 & 20 & 20 & 20 & 20 & 20 & 20 & 20\\
\hline
 \multirow{5}{*}{Louvain}  & NMI &1.0000 & 1.0000 & 1.0000 & 1.0000 & 1.0000 & 0.9987 & 0.9805 & 0.2862 & 0.0784\\
 & AMI & 1.0000 & 1.0000 & 1.0000 & 1.0000 & 1.0000 & 0.9974 & 0.9652 & 0.2249 & 0.0358\\
 & Mod. & 0.9497 & 0.8499 & 0.7503 & 0.6500 & 0.5499 & 0.4496 & 0.3477 & 0.2098 & 0.1967\\
 & purity & 1.0000 & 1.0000 & 1.0000 & 1.0000 & 1.0000 & 0.9999 & 0.9950 & 0.4734 & 0.2660\\
 & time & 0.5444 & 0.7291 & 1.3703 & 1.8963 & 2.6797 & 3.3418 & 4.5768 & 9.2669 & 8.8130\\
  & $q_c$ & 20 & 20 & 20 & 20 & 20 & 20 & 19 & 11 & 11\\
 \hline
 \multirow{5}{*}{Louvain\_force\_q}  & NMI &1.0000 & 1.0000 & 1.0000 & 1.0000 & 1.0000 & 0.9987 & 0.9805 & 0.2981 & 0.0847\\
& AMI & 1.0000 & 1.0000 & 1.0000 & 1.0000 & 1.0000 & 0.9974 & 0.9652 & 0.2382 & 0.0392\\
 & Mod. & 0.9497 & 0.8499 & 0.7503 & 0.6500 & 0.5499 & 0.4496 & 0.3477 & 0.2098 & 0.1967\\
& purity & 1.0000 & 1.0000 & 1.0000 & 1.0000 & 1.0000 & 0.9999 & 0.9950 & 0.4642 & 0.2418\\
 & time & 0.5440 & 0.7478 & 1.0333 & 1.2042 & 1.7002 & 2.0761 & 2.7676 & 5.4529 & 5.5061\\
 & $q_c$ & 20 & 20 & 20 & 20 & 20 & 20 & 19 & 12 & 12\\
\hline
\multirow{5}{*}{Newman\_Eig Sparse} & NMI & 0.9988 & 0.7225 & 0.7132 & 0.6760 & 0.5498 & 0.3912 & 0.2807 & 0.1340 & 0.0497\\
 & AMI &0.9985 & 0.6521 & 0.6396 & 0.6122 & 0.4704 & 0.3071 & 0.2098 & 0.0907 & 0.0235\\
& Mod. & 0.9482 & 0.5493 & 0.5057 & 0.4157 & 0.3051 & 0.2379 & 0.1917 & 0.1578 & 0.1461\\
& purity & 0.9994 & 0.7828 & 0.8134 & 0.7460 & 0.6723 & 0.6366 & 0.5654 & 0.4367 & 0.3594\\
& time & 0.6346 & 0.4391 & 0.4197 & 0.4522 & 0.4140 & 0.3333 & 0.3565 & 0.3130 & 0.3026\\
& $q_c$ & 20 & 24 & 21 & 19 & 15 & 9 & 7 & 6 & 6\\
\hline
\multirow{5}{*}{Newman\_Eig\_force\_q Sparse} & NMI & 0.9988 & 0.6831 & 0.6787 & 0.6674 & 0.5498 & 0.3912 & 0.2807 & 0.1340 & 0.0497\\
& AMI &0.9985 & 0.5996 & 0.5991 & 0.6026 & 0.4704 & 0.3071 & 0.2098 & 0.0907 & 0.0235\\
& Mod. & 0.9482 & 0.4747 & 0.4463 & 0.4004 & 0.3051 & 0.2379 & 0.1917 & 0.1578 & 0.1461\\
& purity & 0.9994 & 0.7998 & 0.8001 & 0.7358 & 0.6723 & 0.6366 & 0.5654 & 0.4367 & 0.3594\\
& time & 0.6458 & 0.4666 & 0.4373 & 0.4528 & 0.4231 & 0.3418 & 0.3656 & 0.3213 & 0.3114\\
& $q_c$ & 20 & 18 & 17 & 18 & 15 & 9 & 7 & 6 & 6\\
\hline
%\multirow{5}{*}{Newman\_Eig} & NMI & 0.9988 & 0.8575 & 0.8003 & 0.7240 & 0.5832 & 0.4410 & 0.2840 & 0.1278 & 0.0490\\
% & AMI & 0.9982 & 0.7974 & 0.7543 & 0.6909 & 0.5238 & 0.3621 & 0.2135 & 0.0841 & 0.0216\\
%  & Mod. &  0.9479 & 0.6101 & 0.5115 & 0.4225 & 0.3211 & 0.2525 & 0.1916 & 0.1629 & 0.1481\\
% & purity & 0.9994 & 0.9105 & 0.8351 & 0.7311 & 0.6541 & 0.6221 & 0.5594 & 0.4823 & 0.3427\\
% & time &  71.8843 & 29.3700 & 23.9832 & 21.0710 & 19.6523 & 19.8157 & 19.0763 & 18.6906 & 18.0828\\
%  & $q_c$& 20 & 37 & 29 & 23 & 16 & 10 & 7 & 5 & 6\\  
% \hline
% \multirow{5}{*}{Newman\_Eig\_force\_q} & NMI & 0.9988 & 0.6402 & 0.6923 & 0.6978 & 0.5832 & 0.4410 & 0.2840 & 0.1278 & 0.0490\\
% & AMI & 0.9982 & 0.5211 & 0.6186 & 0.6597 & 0.5238 & 0.3621 & 0.2135 & 0.0841 & 0.0216\\
%  & Mod. &  0.9347 & 0.3153 & 0.3512 & 0.3760 & 0.3211 & 0.2525 & 0.1916 & 0.1629 & 0.1481\\
% & purity &  0.9989 & 0.8366 & 0.7788 & 0.7208 & 0.6541 & 0.6221 & 0.5594 & 0.4823 & 0.3427\\
% & time &  70.7433 & 29.7535 & 23.9806 & 21.2150 & 20.3703 & 19.3982 & 19.1880 & 18.8849 & 18.2439\\
%  & $q_c$ & 20 & 20 & 20 & 20 & 16 & 10 & 7 & 5 & 6\\
%\hline
\multirow{5}{*}{I-AManPG}  & NMI & 1.0000 & 1.0000 & 1.0000 & 1.0000 & 1.0000 & 0.9998 & 0.9600 & 0.4517 & 0.1294\\
& AMI & 1.0000 & 1.0000 & 1.0000 & 1.0000 & 1.0000 & 0.9998 & 0.9539 & 0.4037 & 0.0563\\
& Mod. & 0.9497 & 0.8499 & 0.7503 & 0.6500 & 0.5499 & 0.4498 & 0.3416 & 0.1735 & 0.1113\\
& purity & 1.0000 & 1.0000 & 1.0000 & 1.0000 & 1.0000 & 0.9999 & 0.9679 & 0.5605 & 0.3044\\
& time &  0.6357 & 0.4693 & 0.5870 & 0.9494 & 0.6749 & 0.4720 & 1.0332 & 1.6307 & 1.6757\\
& $q$ & 20 & 20 & 20 & 20 & 20 & 20 & 20 & 20 & 20\\
 \hline
\end{tabular}
}
\end{center}
\end{table}

%We implement the algorithms on the LFR benchmark networks with different mixing parameters $\mu_{\mathrm{LFR}}=0, 0.1, 0.2,...,0.8$.  As the mixing parameter increases, the difficulty level of detecting communities increases. The results showed in Table \ref{table2} are average results of 10 random runs for the randomly generated LFR benchmark graphs.
The input parameter for I-AManPG determining the number of communities to be produced is $q$ and $q_c$ is the number of communities computed by each method. The "force\_q" label denotes the versions with modified termination criteria so that $q_c$ is as close to $q_{true}$ as the methods allow. 

From the results in Table \ref{table2}, we observe that when $\mu_{\mathrm{LFR}}=0$,  I-AManPG yields $\mathrm{NMI} = \mathrm{AMI} = \mathrm{purity} = 1$, the same modularity value and the same assignment to $q_{true} = 20$ strongly connected communities.  The Louvain method also has the same results with ground-truth communities while Danon et al.'s algorithm and Newman's spectral algorithm can get the results which are very close to the ground-truth communities, specifically they can detect exactly ground-truth communities for 9 of 10 random LFR graphs.  When $\mu_{\mathrm{LFR}}$ takes 0.1 to 0.4,  I-AManPG and Louvain algorithm can detect the exact ground-truth communities.  When $\mu_{\mathrm{LFR}}=0.5, 0.6$, I-AManPG can get results very close to ground-truth partitions and the results are competitive results with the Louvain algorithm, but with less time.  When $\mu_{\mathrm{LFR}}=0.7, 0.8$,  the results for all of these four algorithms are far away from the ground-truth partitions because the community structures in these cases are not strong.  Danon's algorithm and Newman spectral algorithm detect relatively inaccurate communities and relatively small qualifying external or internal measurements, i.e., NMI, AMI, purity and modularity for all noisy cases.  From the computational times, we can see that I-AManPG requires relatively less time than the others. 
%The reason that the computational time of the Newman's spectral method is less than I-AManPG is that Newman's spectral method is one recursive algorithm. %%This motivates the generation of a recursive I-AManPG.
%When we compare it to the recursive I-AManPG algorithm in Chapter \ref{chap:recursive I-AManPG}, the recursive I-AManPG takes less time than Newman's spectral method. In this sense, it motivates the generation of the recursive I-AManPG. 
It is worth noting that the number of edges $m$ does not change much and only the distribution of edges changes a lot as the mixing parameter increases. So, the computational time for Danon's algorithm and Newman's spectral method do not change much as the mixing parameter increases because the computational time of these two algorithms depends more on $m$ rather than on the distribution of edges.

As the mixing parameter increases, the difficulty level of detecting the correct number of communities increases as well.  I-AManPG requires the desired number of communities as an input parameter value, $q$,  and the choice of an initial $q$ and the development of a dynamic adaptation strategy are key ongoing research tasks for I-AManPG. Since the experiments in Table \ref{table2} use $q=q_{true}$ for I-AManPG and the other methods that are not "forced" are given no indication of $q_{true}$, experiments where I-AManPG uses $q \neq q_{true}$ probe the quality of the $q \neq q_{true}$ communities produced by I-AManPG compared to ground truth.
For each value of the input parameter $q =10,  17, \dotsc , 23$ and mixing parameter $\mu_{\mathrm{LFR}}=0, 0.1, \dotsc , 0.8$,  I-AManPG was applied to 10 randomly generated LFR benchmark networks. The results are shown in Table \ref{table3}.

\begin{table}[htbp]
\renewcommand\tabcolsep{2.0pt}
\begin{center}
\caption{Testing the effectiveness of I-AManPG for input parameter values $q\neq q_{true}$.}\label{table3}
%\resizebox{\textwidth}{!}
\small{
\begin{tabular}{l |c |c| c |c |c |c| c| c|c|c}
\hline
\hline
\multirow{2}{*}{ } & \multirow{2}{*}{} & \multicolumn{9}{c}{$\mu_{\mathrm{LFR}}$}\\
\cline{3-11}
&  & 0 & 0.1 & 0.2 & 0.3 & 0.4 & 0.5 & 0.6 & 0.7 &  0.8\\
\multirow{5}{*}{I-AManPG}  & NMI & 0.7178 & 0.7178 & 0.7178 & 0.7176 & 0.7137 & 0.6966 & 0.6230 & 0.2957 & 0.0795\\
& AMI & 0.5452 & 0.5452 & 0.5452 & 0.5451 & 0.5428 & 0.5327 & 0.4856 & 0.2202 & 0.0324\\
& Mod. & 0.6924 & 0.6198 & 0.5474 & 0.4733 & 0.4005 & 0.3251 & 0.2531 & 0.1534 & 0.1007\\
 & purity &  1.0000 & 1.0000 & 1.0000 & 0.9999 & 0.9971 & 0.9842 & 0.9243 & 0.6363 & 0.5118\\
& time &  0.1939 & 0.2175 & 0.5020 & 0.3373 & 0.2593 & 0.4983 & 0.3297 & 0.7154 & 0.6616\\
 & $q$ & 10 & 10 & 10 & 10 & 10 & 10 & 10 & 10 & 10\\
 \hline
%\multirow{5}{*}{I-AManPG}  & NMI & 0.8024 & 0.8024 & 0.8024 & 0.8024 & 0.8000 & 0.7878 & 0.7216 & 0.3359 & 0.0884\\
%& AMI & 0.6565 & 0.6565 & 0.6565 & 0.6565 & 0.6550 & 0.6470 & 0.6002 & 0.2626 & 0.0359\\
%& Mod. & 0.7837 & 0.7017 & 0.6190 & 0.5358 & 0.4533 & 0.3683 & 0.2835 & 0.1598 & 0.1055\\
%& purity &  1.0000 & 1.0000 & 1.0000 & 1.0000 & 0.9983 & 0.9894 & 0.9378 & 0.6144 & 0.4522\\
%& time &  0.4716 & 0.3680 & 0.3648 & 0.3553 & 0.3806 & 0.7307 & 0.4830 & 0.8810 & 0.9403\\
%& $q$ & 12 & 12 & 12 & 12 & 12 & 12 & 12 & 12 & 12\\
% \hline
\multirow{5}{*}{I-AManPG}  & NMI & 0.9531 & 0.9523 & 0.9531 & 0.9523 & 0.9531 & 0.9493 & 0.9121 & 0.4132 & 0.1124\\
& AMI & 0.9049 & 0.9034 & 0.9049 & 0.9034 & 0.9049 & 0.8999 & 0.8675 & 0.3535 & 0.0466\\
& Mod. & 0.9245 & 0.8266 & 0.7306 & 0.6324 & 0.5363 & 0.4367 & 0.3362 & 0.1724 & 0.1100\\
& purity &  1.0000 & 1.0000 & 1.0000 & 1.0000 & 1.0000 & 0.9985 & 0.9704 & 0.5765 & 0.3498\\
& time &  1.0201 & 0.7729 & 0.6772 & 0.6584 & 0.7162 & 0.7193 & 1.0411 & 1.7427 & 1.5559\\
& $q$ & 17 & 17 & 17 & 17 & 17 & 17 & 17 & 17 & 17\\
 \hline
\multirow{5}{*}{I-AManPG}  & NMI & 0.9717 & 0.9722 & 0.9665 & 0.9726 & 0.9717 & 0.9709 & 0.9411 & 0.4298 & 0.1183\\
& AMI & 0.9415 & 0.9424 & 0.9368 & 0.9433 & 0.9415 & 0.9417 & 0.9131 & 0.3733 & 0.0498\\
& Mod. & 0.9362 & 0.8382 & 0.7367 & 0.6423 & 0.5427 & 0.4442 & 0.3424 & 0.1721 & 0.1089\\
& purity &  1.0000 & 1.0000 & 0.9947 & 1.0000 & 1.0000 & 0.9989 & 0.9773 & 0.5746 & 0.3379\\
& time &  1.0509 & 0.9585 & 0.7564 & 0.7069 & 0.8014 & 0.7777 & 1.1587 & 2.0853 & 1.9374\\
& $q$ & 18 & 18 & 18 & 18 & 18 & 18 & 18 & 18 & 18\\
\hline
\multirow{5}{*}{I-AManPG}  & NMI & 0.9883 & 0.9883 & 0.9845 & 0.9883 & 0.9796 & 0.9828 & 0.9520 & 0.4339 & 0.1264\\
& AMI & 0.9753 & 0.9753 & 0.9716 & 0.9753 & 0.9667 & 0.9699 & 0.9365 & 0.3803 & 0.0557\\
& Mod. & 0.9453 & 0.8460 & 0.7446 & 0.6474 & 0.5445 & 0.4468 & 0.3427 & 0.1728 & 0.1111\\
& purity & 1.0000 & 1.0000 & 0.9962 & 1.0000 & 0.9914 & 0.9949 & 0.9714 & 0.5634 & 0.3203\\
& time &  0.9389 & 0.9057 & 0.8584 & 0.9534 & 0.7417 & 1.0226 & 1.0085 & 2.1798 & 1.8913\\
& $q$ & 19 & 19 & 19 & 19 & 19 & 19 & 19 & 19 & 19\\
 \hline
\multirow{5}{*}{I-AManPG}  & NMI & 1.0000 & 1.0000 & 1.0000 & 1.0000 & 1.0000 & 0.9998 & 0.9600 & 0.4517 & 0.1294\\
& AMI & 1.0000 & 1.0000 & 1.0000 & 1.0000 & 1.0000 & 0.9998 & 0.9539 & 0.4037 & 0.0563\\
& Mod. & 0.9497 & 0.8499 & 0.7503 & 0.6500 & 0.5499 & 0.4498 & 0.3416 & 0.1735 & 0.1113\\
& purity & 1.0000 & 1.0000 & 1.0000 & 1.0000 & 1.0000 & 0.9999 & 0.9679 & 0.5605 & 0.3044\\
& time &  0.6577 & 0.5052 & 0.6423 & 0.9890 & 0.7112 & 0.4951 & 1.0755 & 1.7648 & 1.7855\\
& $q$ & 20 & 20 & 20 & 20 & 20 & 20 & 20 & 20 & 20\\
 \hline
\multirow{5}{*}{I-AManPG}  & NMI & 0.9944 & 0.9946 & 0.9946 & 0.9945 & 0.9945 & 0.9927 & 0.9620 & 0.4541 & 0.1322\\
& AMI & 0.9881 & 0.9885 & 0.9885 & 0.9881 & 0.9883 & 0.9863 & 0.9547 & 0.4056 & 0.0566\\
& Mod. & 0.9303 & 0.8338 & 0.7370 & 0.6380 & 0.5402 & 0.4422 & 0.3384 & 0.1731 & 0.1107\\
& purity & 1.0000 & 1.0000 & 1.0000 & 1.0000 & 1.0000 & 0.9985 & 0.9671 & 0.5498 & 0.1751\\
& time &  1.6041 & 1.2957 & 1.4677 & 1.1984 & 2.4890 & 1.3366 & 1.0639 & 1.9795 & 2.0597\\
& $q$ & 21 & 21 & 21 & 21 & 21 & 21 & 21 & 21 & 21\\
\hline
\multirow{5}{*}{I-AManPG}  & NMI & 0.9890 & 0.9890 & 0.9894 & 0.9890 & 0.9887 & 0.9868 & 0.9649 & 0.4649 & 0.1352\\
& AMI & 0.9764 & 0.9765 & 0.9773 & 0.9765 & 0.9763 & 0.9742 & 0.9500 & 0.4159 & 0.0574\\
& Mod. & 0.9124 & 0.8161 & 0.7239 & 0.6263 & 0.5315 & 0.4349 & 0.3358 & 0.1728 & 0.1123\\
& purity & 1.0000 & 1.0000 & 1.0000 & 1.0000 & 0.9996 & 0.9981 & 0.9831 & 0.5642 & 0.1770\\
& time &  1.7685 & 1.6543 & 1.4107 & 0.8671 & 1.7232 & 1.4291 & 0.9652 & 2.4757 & 2.5771\\
& $q$ & 22 & 22 & 22 & 22 & 22 & 22 & 22 & 22 & 22\\
\hline
\multirow{5}{*}{I-AManPG}  & NMI & 0.9835 & 0.9838 & 0.9842 & 0.9834 & 0.9836 & 0.9805 & 0.9532 & 0.4622 & 0.1402\\
& AMI & 0.9649 & 0.9654 & 0.9663 & 0.9647 & 0.9655 & 0.9618 & 0.9317 & 0.4105 & 0.0606\\
& Mod. & 0.8937 & 0.8009 & 0.7114 & 0.6148 & 0.5233 & 0.4278 & 0.3288 & 0.1710 & 0.1120\\
& purity & 1.0000 & 1.0000 & 1.0000 & 1.0000 & 0.9997 & 0.9975 & 0.9784 & 0.5662 & 0.1798\\
& time &  1.9044 & 1.7584 & 1.6944 & 1.2244 & 1.8282 & 2.0833 & 1.3244 & 2.9445 & 2.5133\\
& $q$ & 23 & 23 & 23 & 23 & 23 & 23 & 23 & 23 & 23\\
 \hline
\end{tabular}
}
\end{center}
\end{table}

%First note that, from the results in Table \ref{table2} and Table \ref{table3}, the NMI, AMI, modularity and purity from I-AManPG are better than or competetive with the results for the other methods, regular or modified, when they produce a $q_c$ in the set of the $q$ values used for I-AManPG in Table \ref{table3}; specifically, when Danon's method with $\mu_{\mathrm{LFR}}=0.2$ produces $q_c=18$,  Newman's spectral method for $\mu_{\mathrm{LFR}}=0.3$ produces $q_c=23$, Newman's spectral method for $\mu_{\mathrm{LFR}}=0.5$ produces $q_c=10$, and the Louvain method for $\mu_{\mathrm{LFR}}=0.6$ produces $q_c=19$. 
%%Furthermore, in all of these cases I-AManPG requires less computational time.
%In general, even with $q \neq q_{true}$ the partitions produced by I-AManPG tend to be better than those computed by the other methods. However, since not all of these partitions achieve high NMI and AMI, a more detailed consideration is required to assess the effectiveness of I-AManPG. This is done using purity and assessing the partitions nesting relative to ground truth.
Consider the results of different algorithms for the cases with the same computed number of communities. From the results in Table \ref{table2} and Table \ref{table3}, we observe that NMI, AMI, modularity and purity of I-AManPG are larger than the results of Danon's algorithm for $q=18$ and $\mu=0.2$,  Newman's spectral algorithm for $q=21$ and $\mu=0.2$ and Newman's spectral algorithm for $q=19$ and $\mu=0.3$.  NMI, AMI, modularity and purity of I-AManPG are competitive with the results of Louvain's algorithm for $q=19$ and $\mu=0.6$ and I-AManPG requires less time. 

Table \ref{table3} shows that for any particular $\mu_{\mathrm{LFR}}$, as the value of the input parameter $q$ moves away from $q_{true}=20$ the modularity decreases, while the NMI and AMI achieved I-AManPG move away from desireable values close to $1$. This does not mean that the partitions are not good relative to the ground-truth partition. For $\mu_{\mathrm{LFR}}=0$ and $ 0.1$ the community partitions for $q = 17, 18, 19, 20, 21, 22$ are perfectly nested, i.e., the extra communities of partition of $q+1$ are refinements of the partition of $q$ by splitting without crossing the ideal community boundaries. Since $q_{true}$ is in this set, this says that for values of $\mu_{\mathrm{LFR}}$ that imply strong community structure I-AManPG produces communities that respect the affinities of the ground-truth partition. For these two values of $\mu_{\mathrm{LFR}}$, when $q=10$, the farthest from $q_{true}$ in the set, the I-AManPG partitionis perfectly nested relative to the next partition, i.e., $q=17$, and the ground-truth partition. When $\mu_{\mathrm{LFR}}$ has values from  0.2 to 0.6, each partitioning for $q = 10, 17, 18, 19,21, 22, 23$ is well-nested with partitioning of $q_{true}$ and the associated purity values are very close to $1$. For $\mu_{\mathrm{LFR}}=0.7$ and $ 0.8$, the community structure is not strong in the ground-truth LFR networks and therefore purity would be expected to degrade. %This is observed but note that, as in Table \ref{table2}, I-AManPG requires less computational time to produce its community partition.
These results provide promising evidence for the possibility of development of a dynamic adaptation strategy for I-AManPG. Since the sparsification to project a Stiefel element to an assignment applies, in general, to a dense $N \times q$ matrix, storage and computation can become excessive when a large number of communities must be produced. Effective nesting means this can be avoided efficiently as is done with other divisive projection-based algorithms.

\subsection{Normalized cut}

Normalized cut has been widely used for image segmentation.
Its optimization formulation is given by 
\begin{equation} \label{e37}
\min_{X \in \mathcal{A}_v} {f_{\mathrm{NC}}(X) = - \mathrm{trace} ( X^T D^{-1/2} W D^{-1/2} X )}.
\end{equation}
%\whcomm{}{
This problem assumes graph-based data that is represented by an appropriate matrix characterizing the relationships between the basic data elements from the application problem. %} 
In the case of gray image segmentation, the matrix $W \in \mathbb{R}^{mn \times mn}$ is an affinity matrix of an $m$ by $n$ pixels gray image, $D \in \mathbb{R}^{mn \times mn}$ is a diagonal matrix with $D_{ii} = \sum_{j = 1}^{mn} W_{i j}$, and $v = \mathrm{diag}(D^{1/2})$. Here, we use the approach in~\cite{CYS2004} to choose $W$ and $D$. %, to further shift the diagonal entries of $W$ and $D$ by a constant. 

Problem~\eqref{e37} can be optimized by the weighted kernel $k$-means algorithm, see e.g.,~\cite[Algorithm~1]{DGK2005}. Note that Problem~\eqref{e37} has many low-quality local minimizers and descent optimization algorithms usually are not able to escape from them. Thus, initialization plays an important role in finding an acceptable solution. Let $U$ be the $n \times q$ matrix of the $q$ leading eigenvector of the matrix $D^{-1/2} W D^{-1/2}$. If $X$ is only required to be orthonormal, then $U$ is a global minimizer of~\eqref{e37}. Since $U$ is unlikely to be in $\mathcal{A}_v$, one approach is to find a matrix in $\mathcal{A}_v$ that is close to $U$. Different notions of closeness yield different methods. Next, we introduce four initialization methods,  including the proposed one based on AManPG. %It is shown empirically that the proposed method often find better minimizers in the sense of objective function value, see Figure~\ref{fig:FNC}.

%\begin{enumerate}
%\item 
Bach and Jordan~\cite{BJ2003} seek to find a matrix $Y \in \mathcal{A}_v$ that minimizes
\begin{equation} \label{e38}
\|U U^T - Y Y^T\|_F.
\end{equation}
In other words, the difference between $U$ and $Y$ is measured by the orthogonal projection matrix. The weighted kernel $k$-means is suggested to solve~\eqref{e38} see~\cite[Figure~1]{BJ2003}. However, similar to~\eqref{e37}, the kernel $k$-means for~\eqref{e38} may also get stuck in a local minimizer. We use $k$-means++ in Matlab for the initialization of the kernel $k$-means for~\eqref{e38}.
%\item 

Shi and Malik~\cite{SM2000} propose to find an indicator matrix that is closest to $U$ up to a rotation.
% \whcomm{[KG indicator already defined]}{}
% where an indicator matrix $Z$ is defined by $Z \geq 0, Z^TZ$ is diagonal, and each row of $Z$ has an entry being one.
Specifically, let $\tilde{U}$ denote the matrix formed by normalizing all rows of $U$. The task is to find an indicator matrix $Z$ and a $q$-by-$q$ orthonormal matrix $Q$ that minimize
\[
\|Z - \tilde{U} Q\|_F.
\]
Shi and Malik~\cite{SM2000} use an alternating minimization algorithm to find $Z$ and $Q$. Note that this approach neither guarantees to find the global optimum nor use the weight vector $v$. Therefore, this approach may not find a satisfactory solution. Here, we use the C and Matlab hybrid implementation from~\cite{CYS2004}.

%\item 
Karypis and Kumar~\cite{KK1998} developed METIS, a fast, multi-level graph partitioning algorithm that produces equally-sized clusters. It is shown to be an effective method for the kernel $k$-means initialization. Note that METIS does not aim to minimize the objective~\eqref{e37}. We use the C implementation from~\url{http://glaros.dtc.umn.edu/gkhome/metis/metis/download} with the Matlab interface from~\url{https://github.com/dgleich/metismex}.

%\item 
We propose to initialize the weighted kernel $k$-means algorithm by I-AManPG. % with dynamically increasing $\lambda$ by 0.01, 0.04, and 0.2. %, as stated in Algorithm~\ref{alg:NC}. 
Specifically, Problem~\eqref{e37} can be reformulated as
\begin{equation} \label{e39}
\min_{X \in \mathcal{F}_v} - \mathrm{trace} ( X^T D^{-1/2} W D^{-1/2} X ) + \lambda \|X\|_1,
\end{equation}
which can be optimized by I-AManPG. We further propose to gradually increasing $\lambda$ rather than a fixed value of $\lambda$ since increasing $\lambda$ tends to give better solutions in our experiments\footnote{The $\lambda$ in I-AManPG increases by 0.01, 0.04, and 0.2}. The clusters are specified by $P_{\mathcal{A}_v}(X_*)$, as described in Section~\ref{sect:para}. Such clusters are then used as initializations for the weighted kernel $k$-means algorithm.
%%\begin{algorithm}[ht!]
%%\caption{AManPG initialization for normalized cut}
%%\label{alg:NC}
%%\begin{algorithmic}[1]
%%\Ensure $x_*$;
%%\State $x_0 = P_{\mathcal{F}_v} U$, $\sigma_{\max}$ is the largest eigenvalue of $D^{-1/2} W D^{-1/2}$;
%%\For {$a = 0.01, 0.04, 0.2$}
%%\State Use I-AManPG to solve~\eqref{e39} with $\lambda = a \sigma_{\max} / \|x_0\|_1$ until $\|\eta_{z_k}\|$ reduces by a factor of $10^2$. The last iterate is denoted by $\tilde{x}$;
%%\State Set $x_0 = \tilde{x}$;
%%\EndFor
%%\State $x_* = \tilde{x}$;
%%\end{algorithmic}
%%\end{algorithm}
%\end{enumerate}

The four initialization methods are denoted, respectively, by BJ, SM, ME, and AM. Their combinations with the weighted kernel $k$-means algorithms are denoted, respectively, by BJ-k, SM-k, ME-k, and AM-k. The implementation of the weighted kernel $k$-means algorithm is modified from~\cite{Chen2021}\footnote{The implementation in~\cite{Chen2021} is for unweighted kernel $k$-means. We modified it for weighted kernel $k$-means.}. The test images are from~\cite{CYS2004} and the built-in images in Matlab. We further resize them to have 160-by-160 pixels as shown in Figure~\ref{fig:testedimages}.

\begin{figure}
\includegraphics[width=1\textwidth]{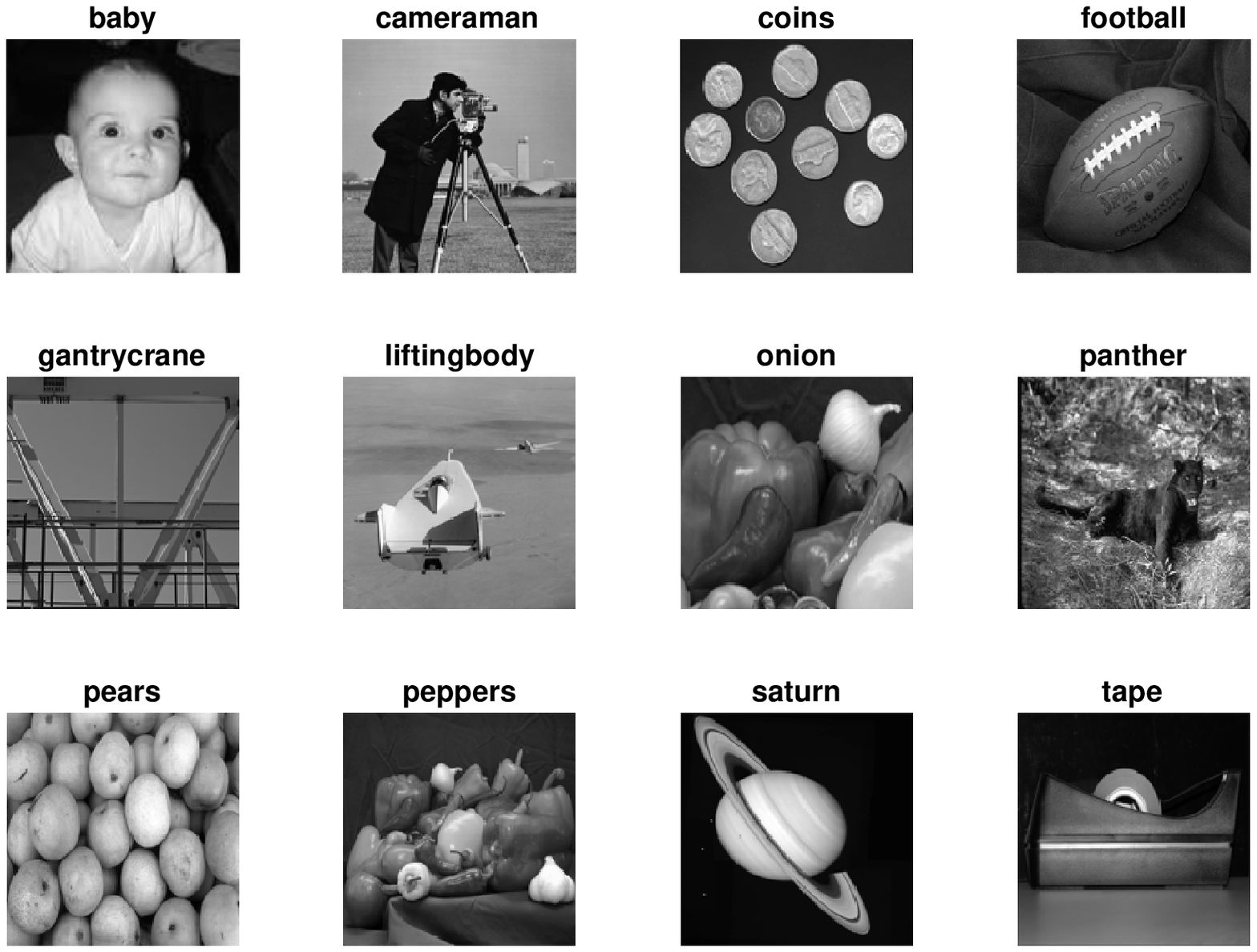}
\vspace{-4em}
\caption{The test images.}
\label{fig:testedimages}
\end{figure}

An average of the negative function values $-f_{\mathrm{NC}}$ of 10 random runs are reported in Figure~\ref{fig:FNC}. We do not report the computational time since the implementations of these methods use different languages and %\whcomm{}{
their computational time cannot be rigorously compared. %}
%comparing their computational time is not fair. 
The qualities of these methods are compared based on the objective function value $f_{\mathrm{NC}}$.
As shown in the figure, METIS initializations are not preferred since they do not aim to minimize $f_{\mathrm{NC}}$. Though SM, SM-k, BJ, BJ-k are competitive to AM and AM-k in many cases, they do not perform well in certain images, such as ME and ME-k for the football image with 3 clusters, and BJ and BJ-k for the tape image with 3 clusters. AManPG based methods are clearly most robust in the sense of minimizing the function $f_{\mathrm{NC}}$ over $\mathcal{A}_v$. The values of $-f_{\mathrm{NC}}$ by AM-k are often the highest one. Even if they are not, they are still close to the highest ones. %\whcomm{}{
The empirical evidence supports the expectations given in the motivation discussion above that I-AManPG is competitive with or superior to initialization strategies in the current literature. %}
% As an example, the segmentations of the four methods for the baby image are shown in Figure~\ref{fig:babyseg}.

\begin{figure}
\centerline{
\includegraphics[width=0.35\textwidth]{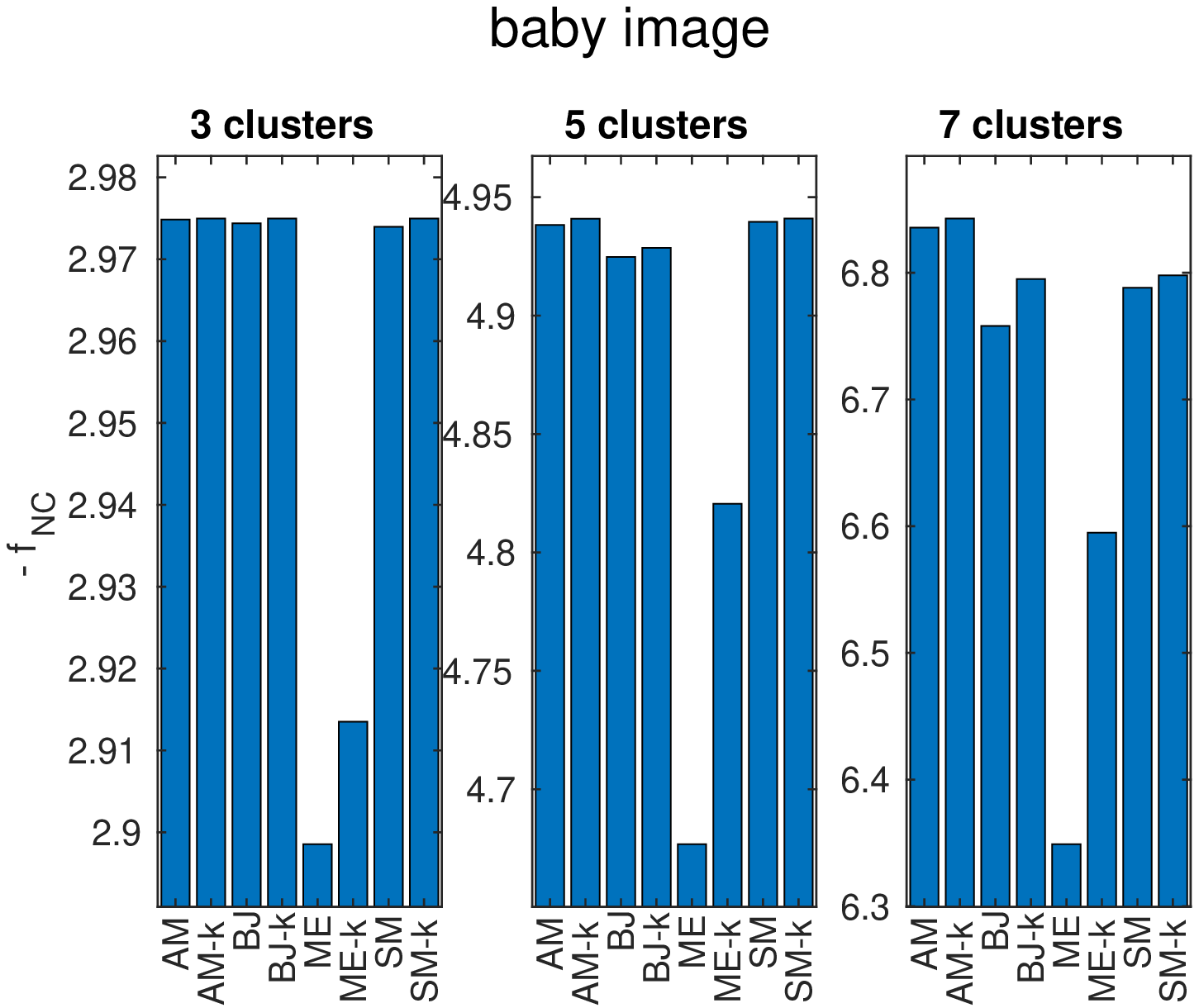}
\includegraphics[width=0.35\textwidth]{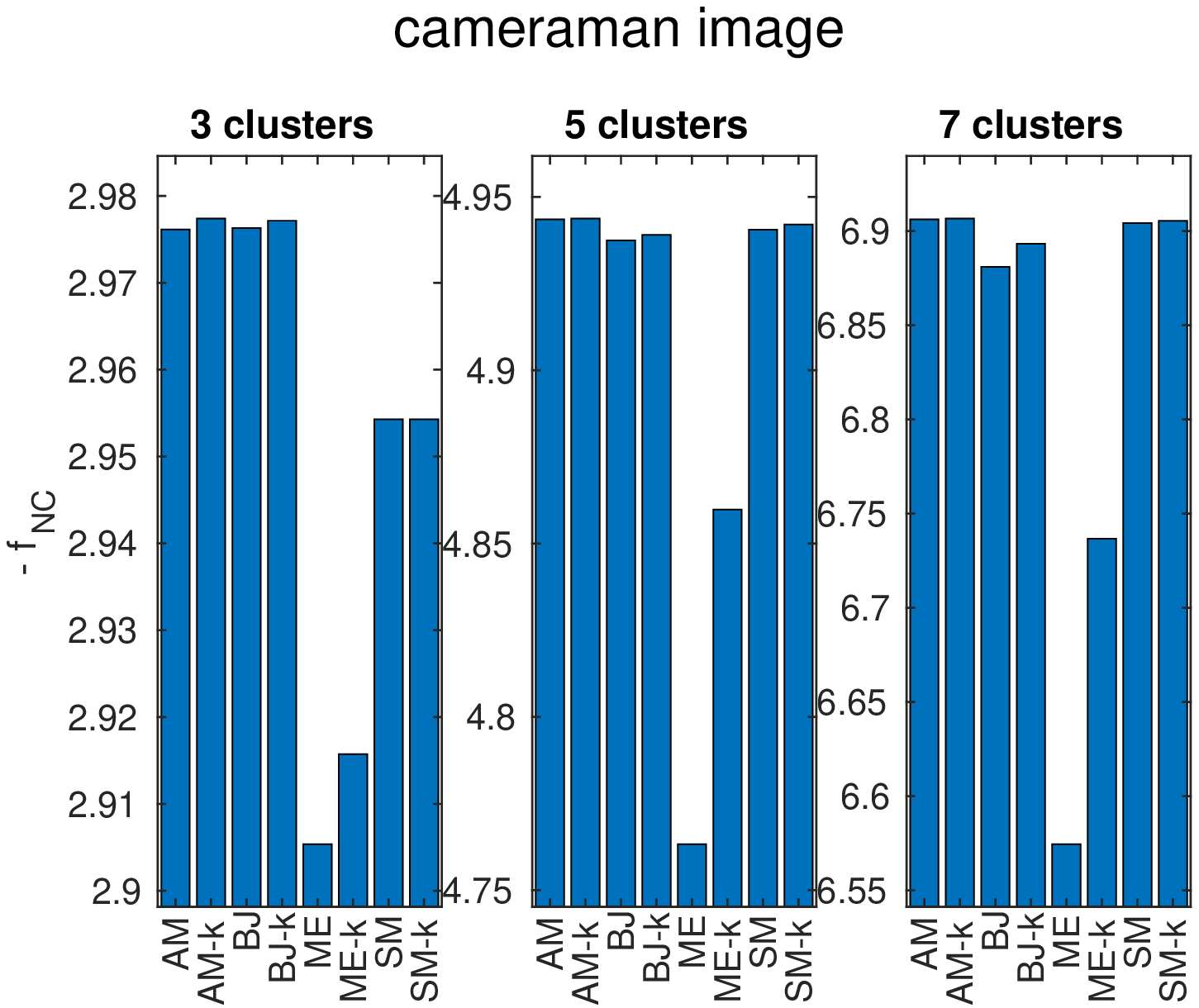}
\includegraphics[width=0.35\textwidth]{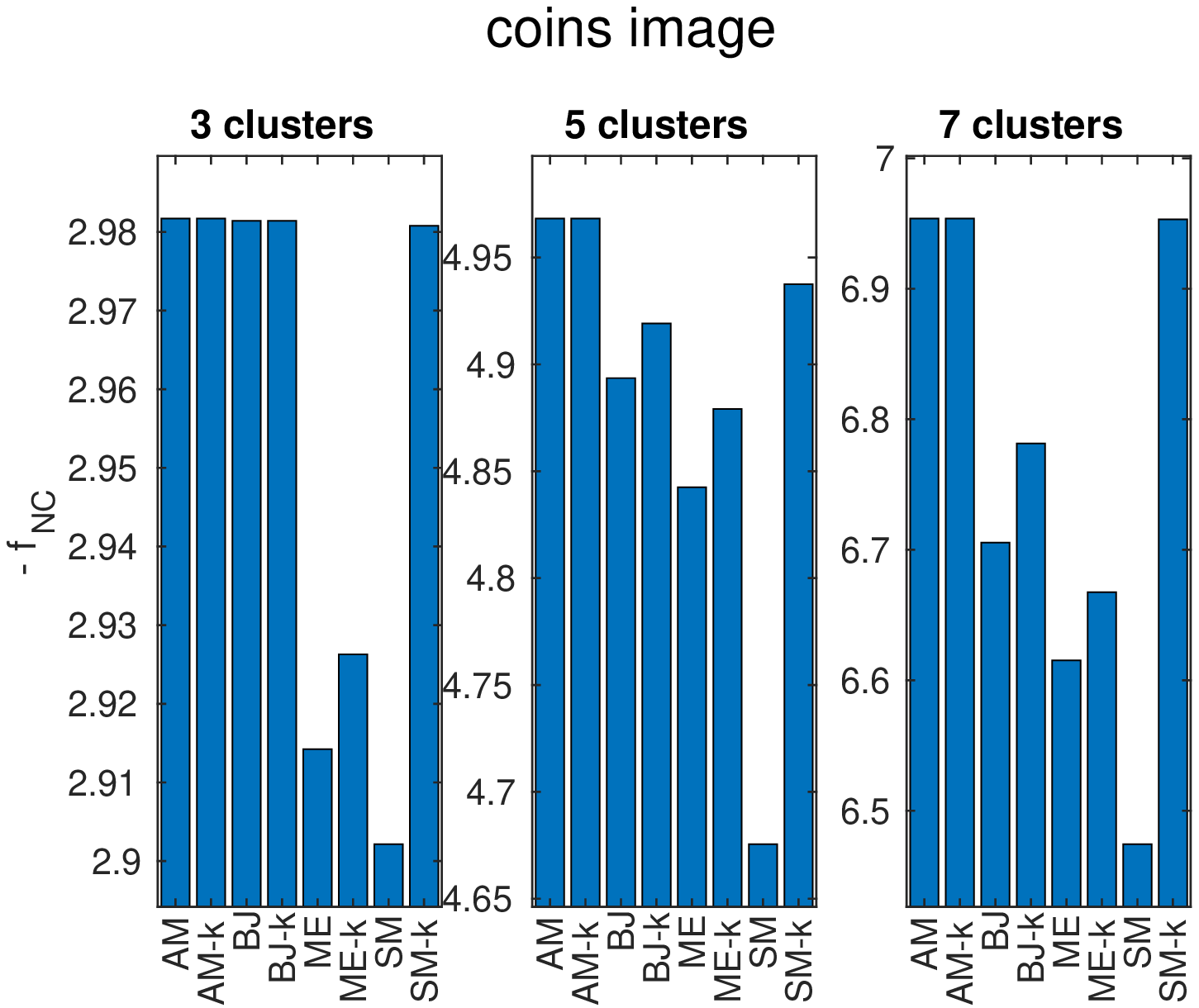}
}
\centerline{
\includegraphics[width=0.35\textwidth]{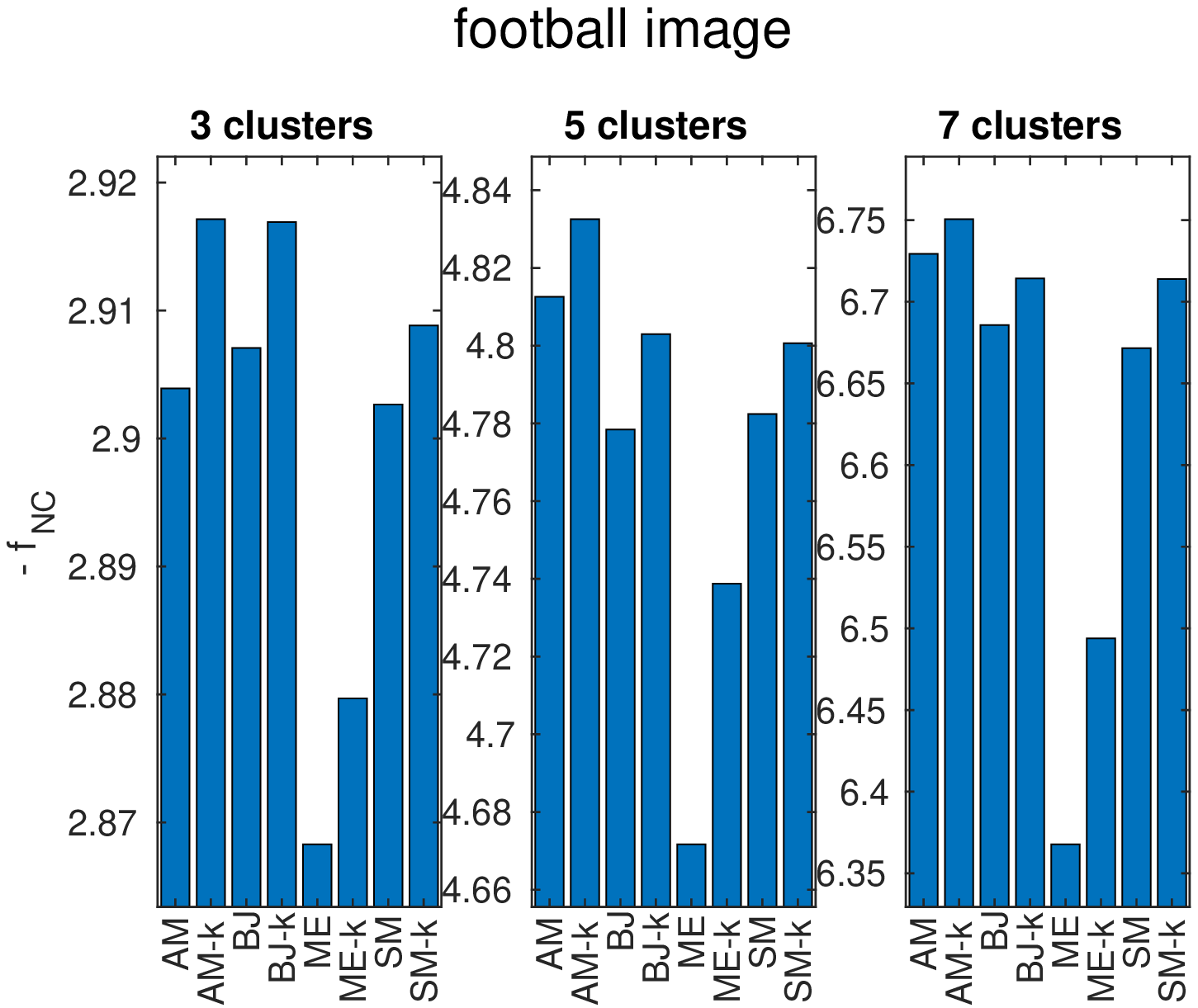}
\includegraphics[width=0.35\textwidth]{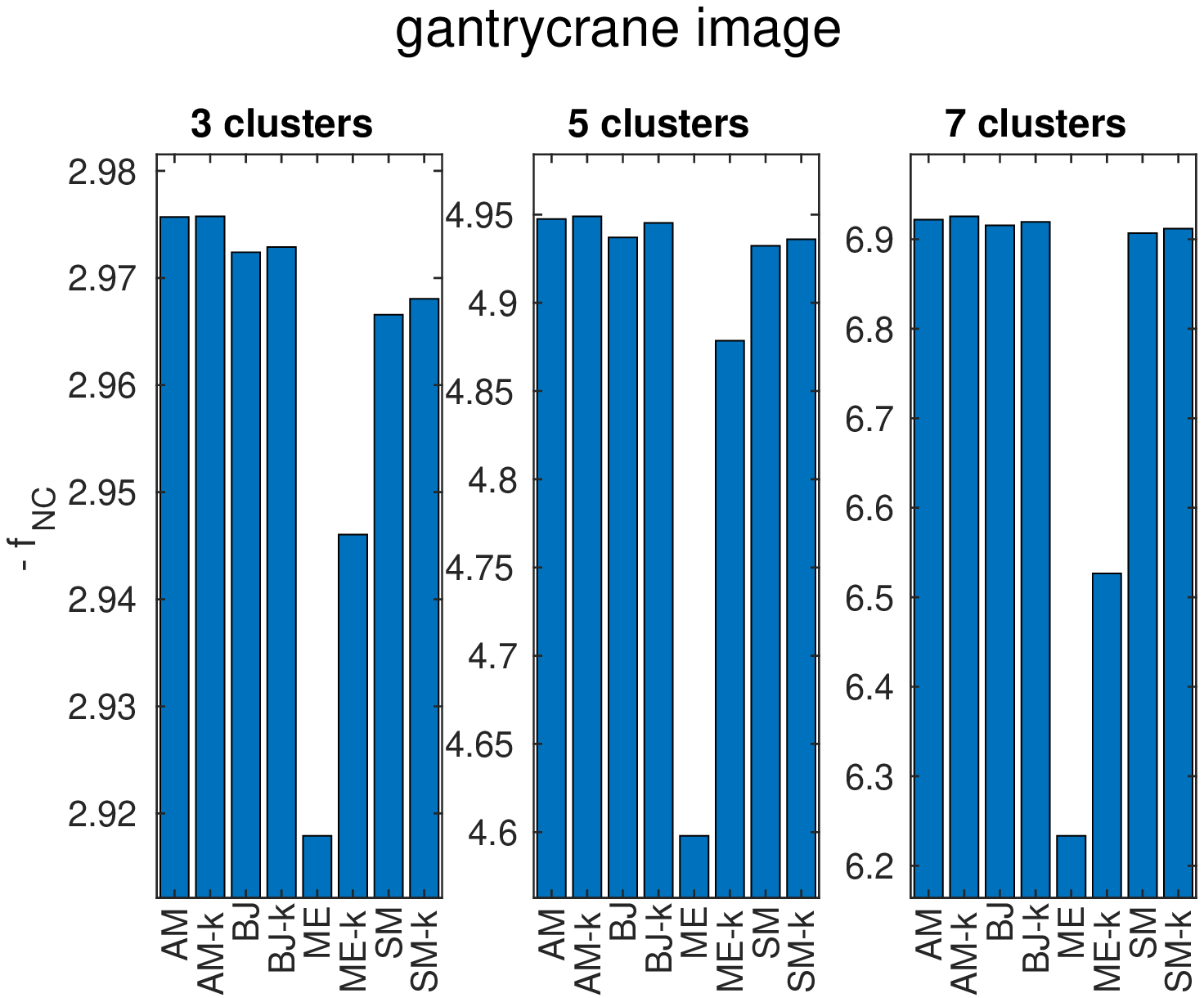}
\includegraphics[width=0.35\textwidth]{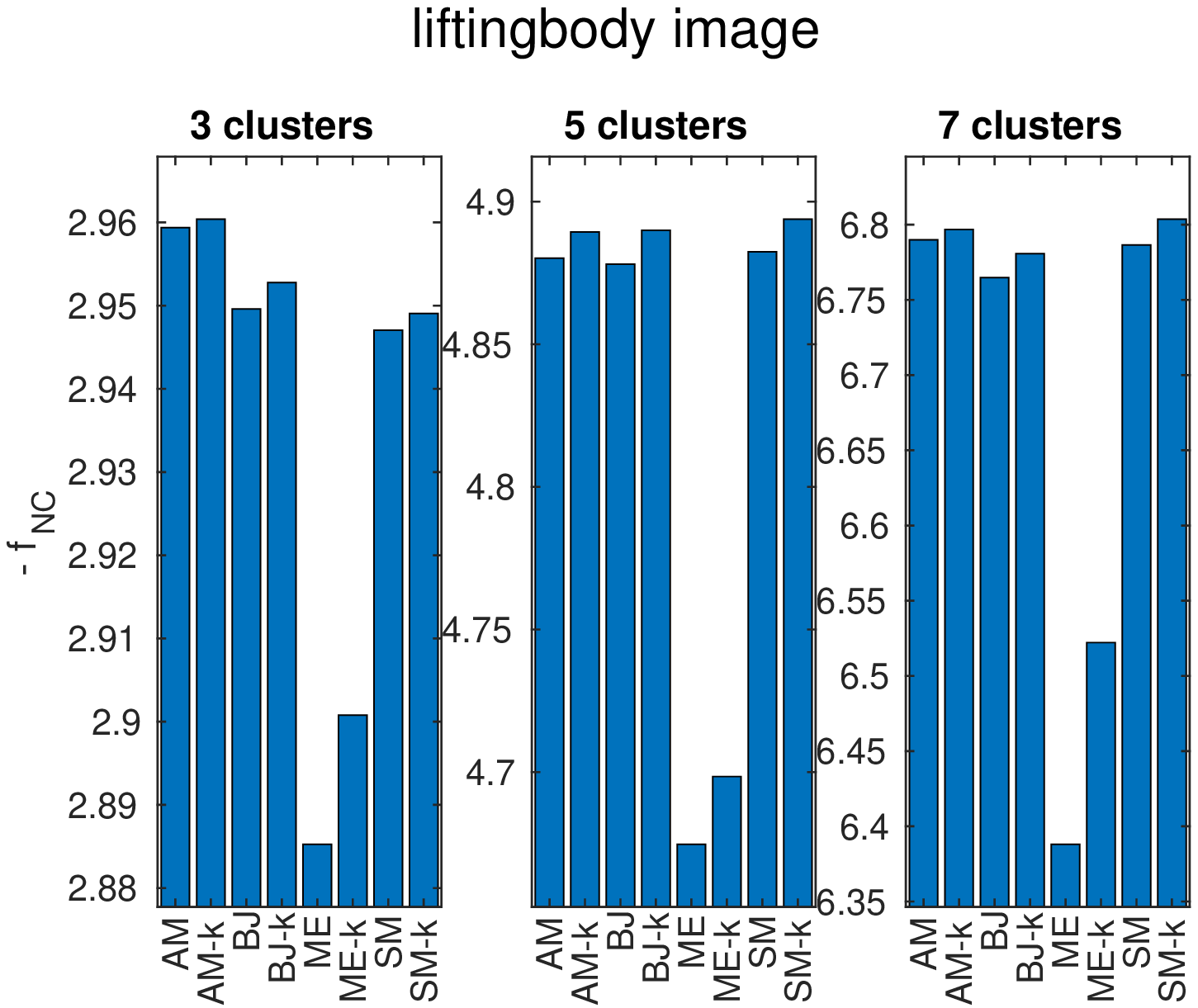}
}
\centerline{
\includegraphics[width=0.35\textwidth]{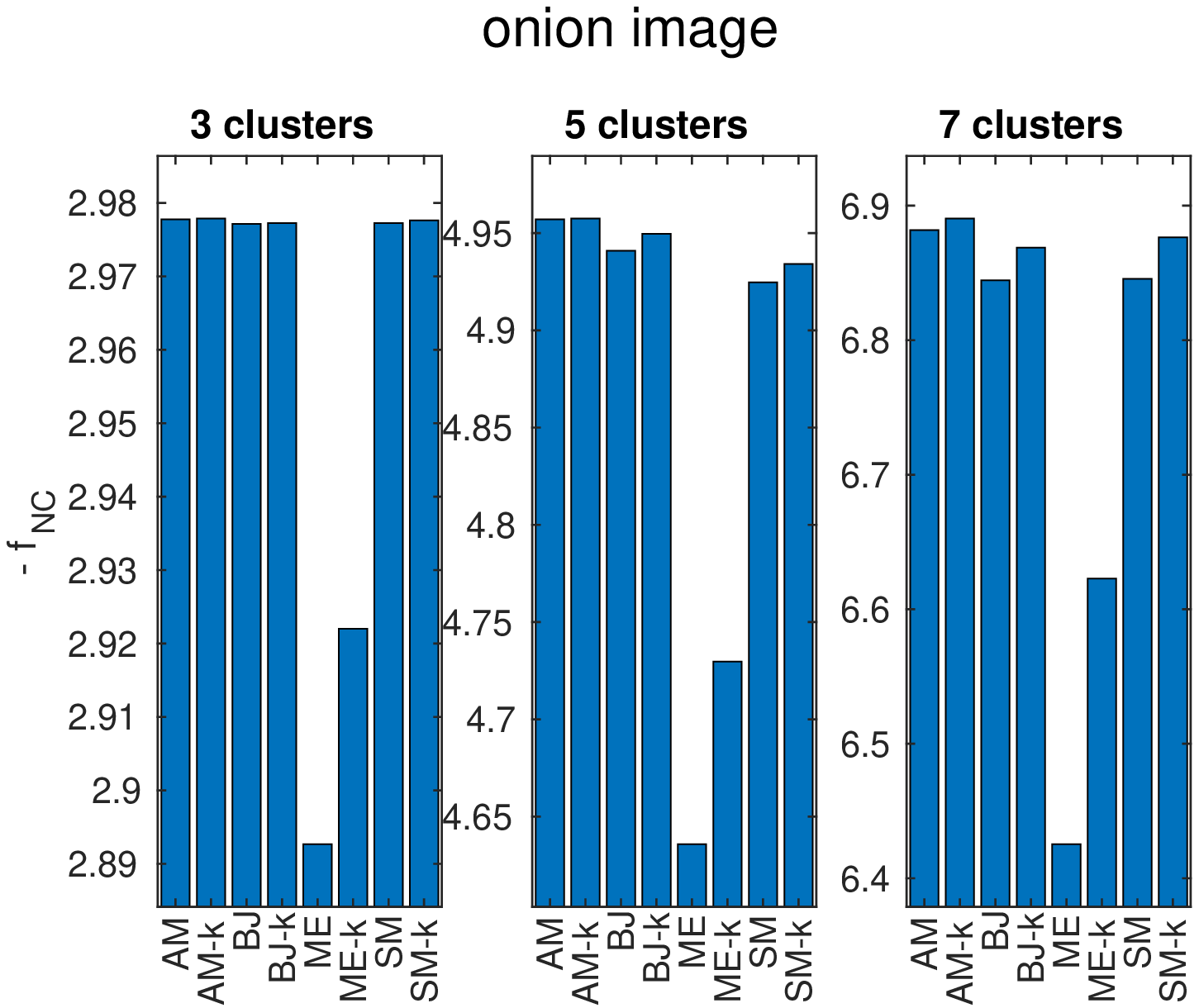}
\includegraphics[width=0.35\textwidth]{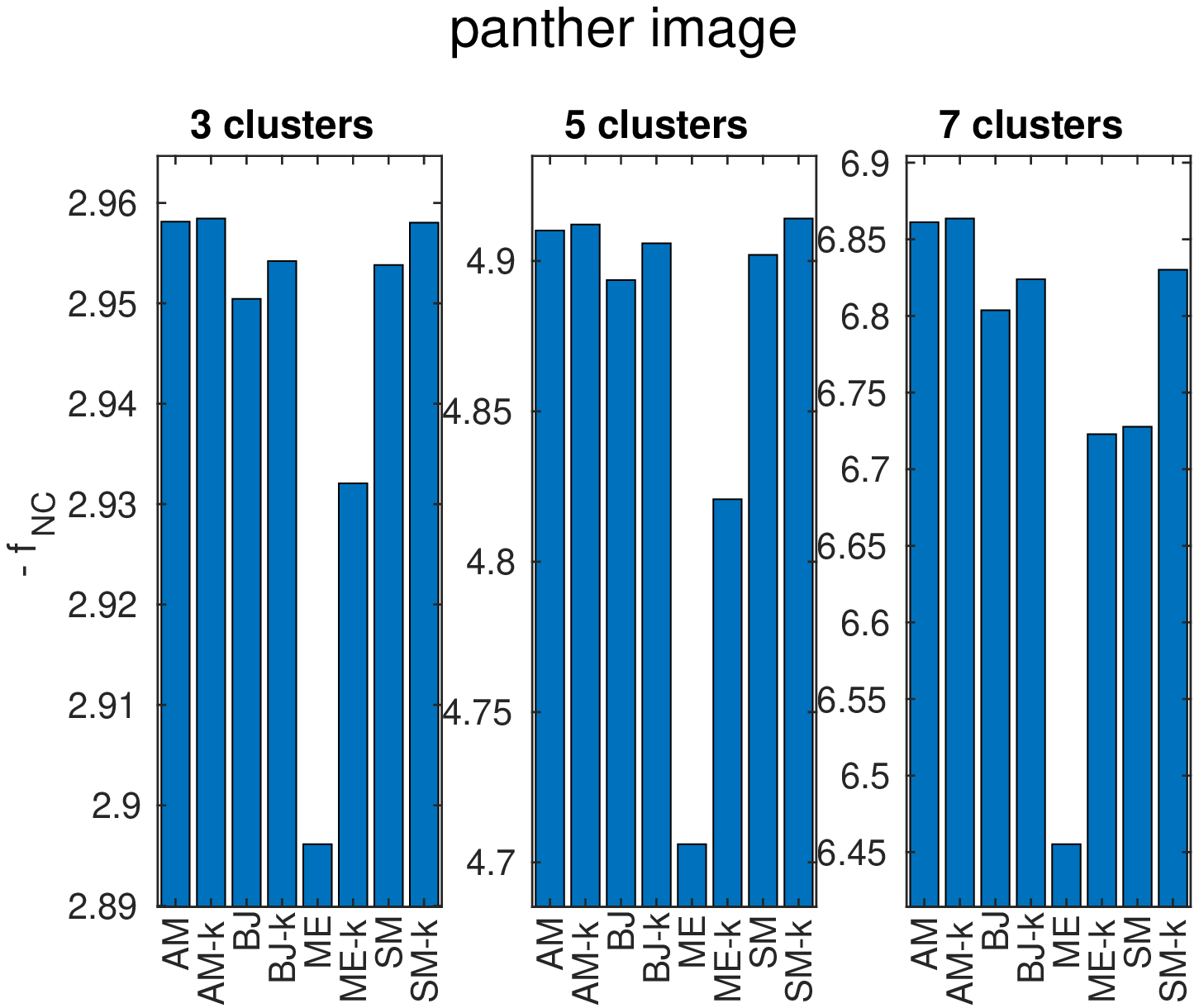}
\includegraphics[width=0.35\textwidth]{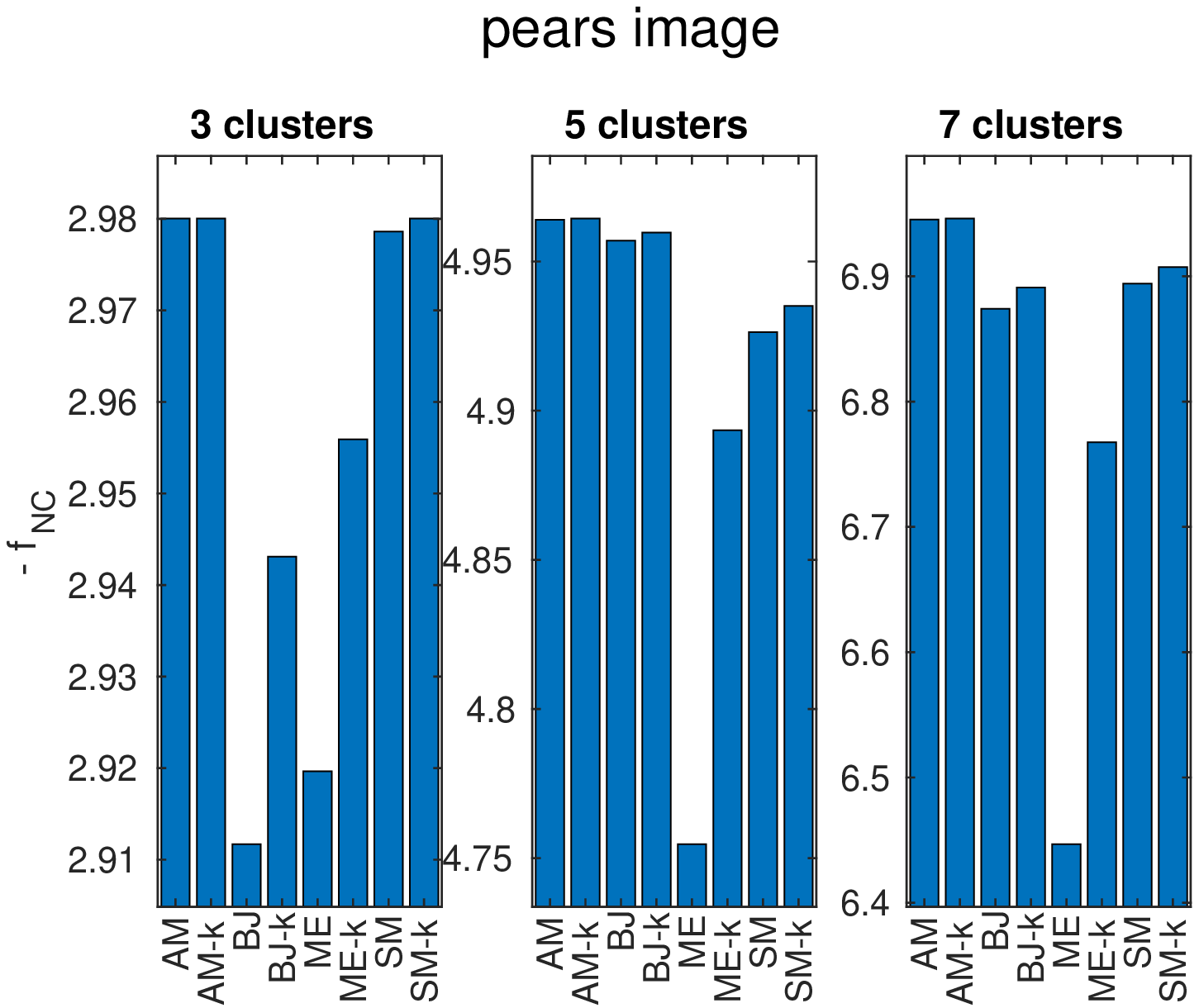}
}
\centerline{
\includegraphics[width=0.35\textwidth]{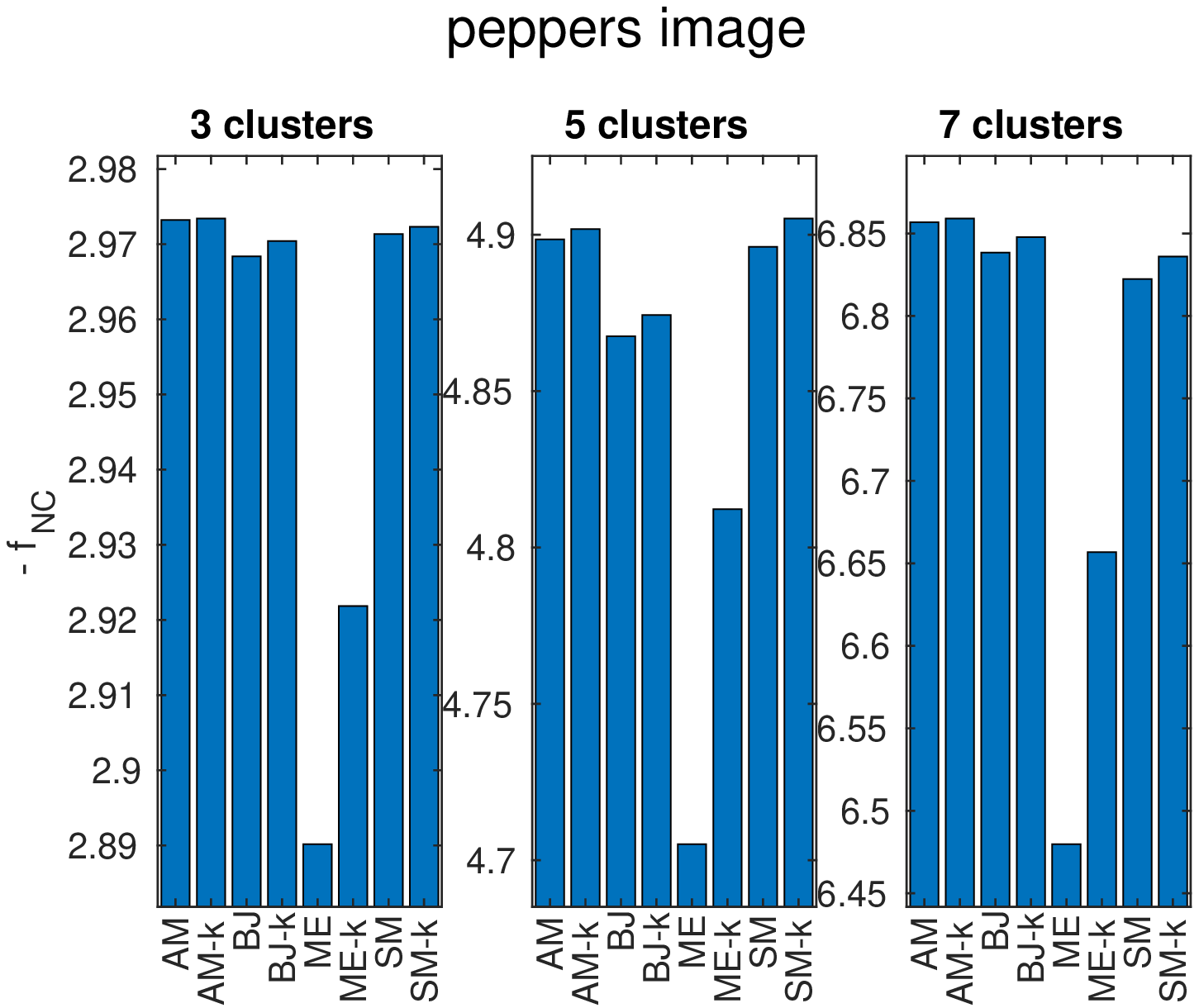}
\includegraphics[width=0.35\textwidth]{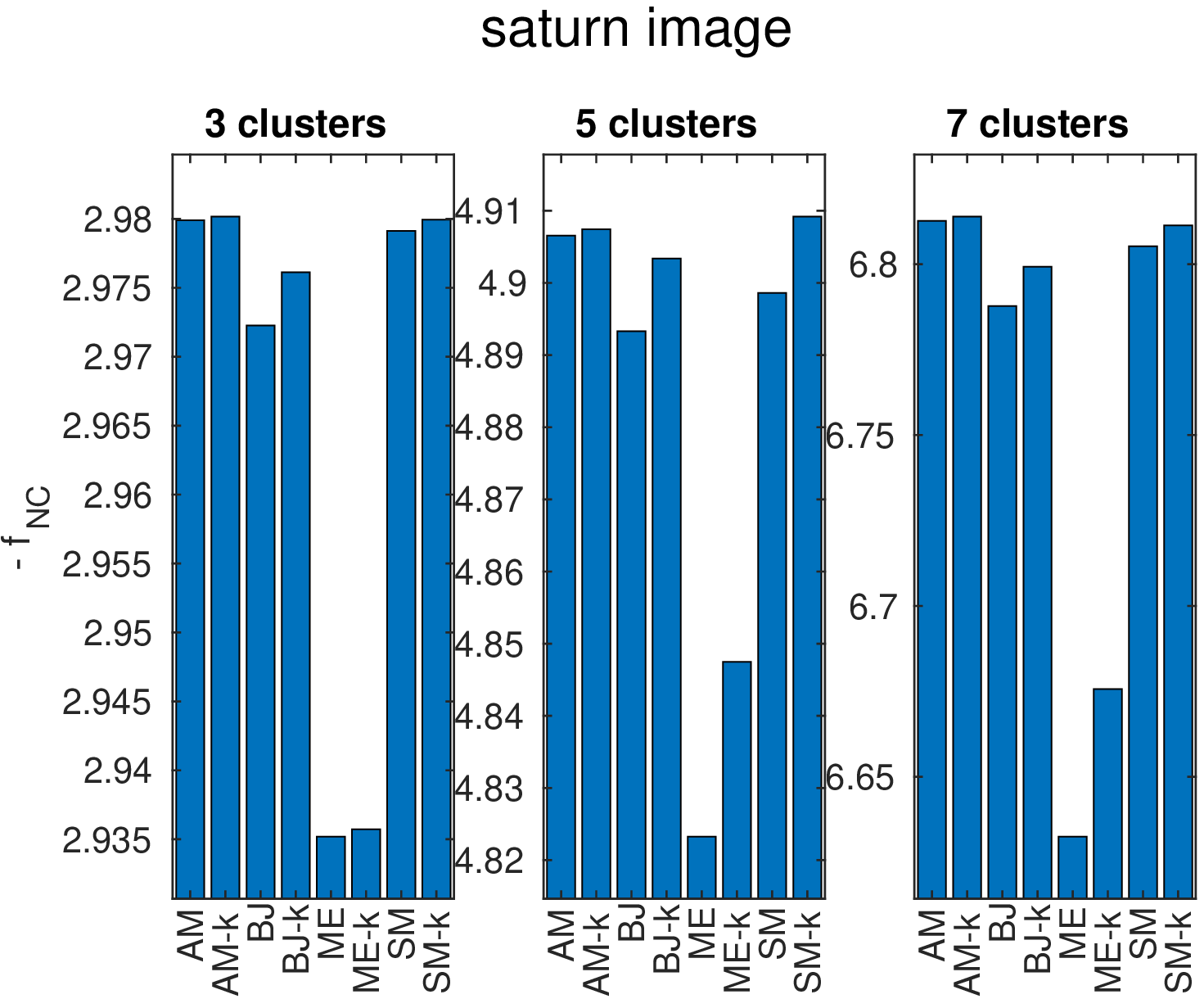}
\includegraphics[width=0.35\textwidth]{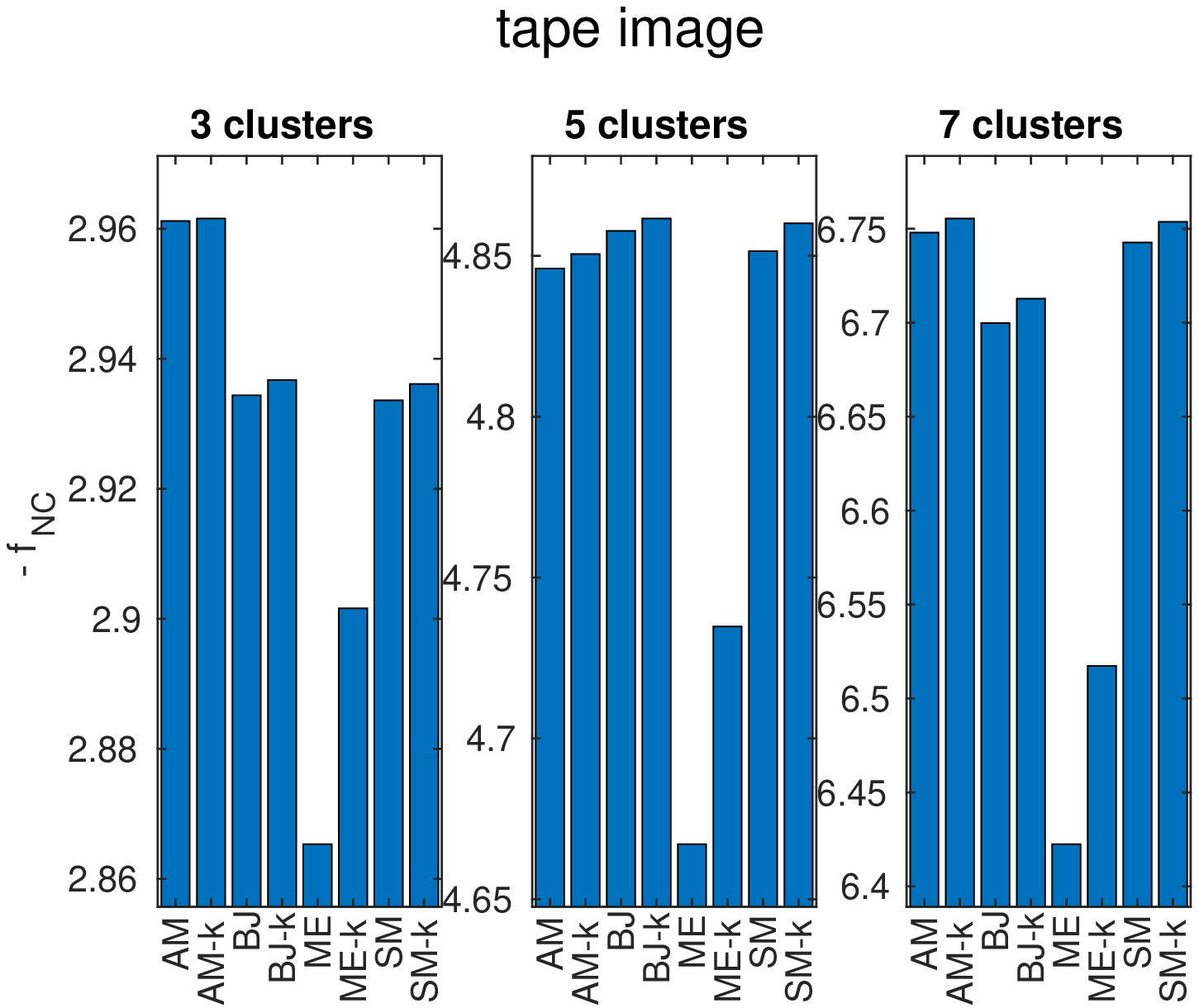}
}
\caption{An average of 10 random runs is reported. $y$-axis represents the function values. Multiple numbers of clusters are tested.
}
\label{fig:FNC}
\end{figure}

\delete{

%\whcomm{[ KG simplified this title  -- an equation number in the title looks odd ]}{}
%\subsubsection{Comparison of the effectiveness of the model~\eqref{e40} with state-of-the-art community detection methods}
\subsubsection{Comparison of the effectiveness of I-AManPG and state-of-the-art community detection methods}

In this section, community detection by the optimization model~\eqref{e40} using I-AManPG is compared to two state-of-the-art methods: Danon et al.'s algorithm~\cite{danon2006effect} and the Louvain method~\cite{blondel2008fast}. % and Newman's spectral optimization method~\cite{newman2006modularity}. %Newman-Girvan modularity $Q$, originally introduced to define a stopping criterion for the algorithm of Girvan and Newman~\cite{newman2004finding}, has rapidly become an essential element of many clustering methods. Modularity is by far the most used and best known quality function. In this section, we do not compare the computational time and only compare the effectiveness for each algorithm.

To make fair comparisons, we use publicly-available Matlab implementations of these algorithms.
The codes for Danon et al.'s algorithm and Louvain's algorithm %, and Newman's spectral method 
are respectively from~\cite{AKehagiascdtoolbox} and ~\cite{matlab_Louvain_AS}. %and~\cite{matlab_network_tools_GB}. 
%For our experiments we modified the code for  Newman's spectral method in \cite{matlab_network_tools_GB}
%to use the same, and in our experience superior, stopping criteria as the code in \cite{newman2006modularity}.  Neither the code in \cite{matlab_network_tools_GB} nor our modified version include the Kernighan-Lin-inspired fine-tuning stage used in \cite{newman2006modularity}.

To compare the effectiveness of the three methods, we consider three quality measurements: normalized mutual information (NMI)~\cite{danon2005comparing}, adjusted mutual information (AMI) \cite{vinh2010information}, and  purity~{\cite{manning2008utze}.} NMI is a similarity measure between two partitions that represents their normalized mutual entropy. AMI further corrects the measure for randomness by adopting a hypergeometric model of randomness. We refer interested readers to~\cite[(2)]{danon2005comparing} and~\cite[Section~4.1]{vinh2010information} for the definitions. Both NMI and AMI take on values between 0 and 1. Values closer 1 indicate greater consistency between the partitions. 
Given two partitions $X$ and $Y$ of $N$ nodes, the purity is given by
$
\mbox{purity}( X, Y ) = \frac{1}{N} \sum_k \max_j \vert X_k \cap Y_j\vert,
$
where $X_k$ denotes the set of nodes in $k$-th community of partition $X$, and likewise for $Y_j$, and $\vert X_k \cap Y_j\vert$ denotes the number of nodes in $X_k \cap Y_j$. The value of purity is also between 0 and 1.  
%Even though we cannot use purity to trade off the quality of the clustering against the number of clusters,  we can use it to measure whether two partitions are nested or not. 
The closer it is to one, the better the two partitions are nested. 
%The purity is computed between the computed partition and the ground-truth partition in our experiments.
In our numerical experiments, the ground truth is known and therefore the computed partition is compared to the ground truth. When the two partitionings do not have same number of communities good nesting of the partition with the larger number of communities in the partition with the smaller number is an indication that further division or agglomeration could yield a closer approximation of ground truth. Such a pair of partitions is therefore preferred to a pair with lower purity. Since purity is not symmetric, we take $X$ to be the partition with the larger number of communities.

%%To compute purity, each cluster is assigned to the class which is most frequent in the cluster, and then the accuracy of this assignment is measured by counting the number of correctly assigned documents and dividing by $N$. Formally:
%\begin{displaymath} \mbox{purity}( \Omega,\mathbb{C} ) = \frac{1}{N} \sum_k \max_j \vert\omega_k \cap c_j\vert ,\end{displaymath} 
%where $\Omega = \{ \omega_1, \omega_2, \ldots, \omega_K \}$is the set of clusters and $\mathbb{C} = \{ c_1,c_2,\ldots,c_J \}$ is the set of classes. We interpret $\omega_k$ as the set of nodes in $\omega_k$ and $c_j$ as the set of nodes in $c_j$. The results are shown in Table \ref{table1}.

For LFR benchmark networks used in the comparisons in this section,  the parameters $\tau_1$, $\tau_2$, $N$, $d_{ave}$, $d_{max}$, $N_c$, and $n_c$ are respectively set to $-2$, $-1$, $1000$, $20$, $40$, $50$, and $20$. The value of $\lambda$ in~\eqref{e40} is 0.3, as in the previous set of experiments. The empirical results with multiple values of $\mu_{\mathrm{LFR}}$ are reported in Table~\ref{table2}. As before, each result is an average over 10 randomly selected LFR benchmark networks.

\begin{table}[htbp]
\renewcommand\tabcolsep{2.0pt}
\begin{center}
%% \whcomm{[KG caption moved into tex]}{}
\caption{Compare the effectiveness of I-AManPG to other state-of-the-art methods. } \label{table2}
\resizebox{\textwidth}{!}
{
\begin{tabular}{l |c |c| c |c |c |c| c| c|c|c}
\hline
\hline
\multirow{2}{*}{ } & \multirow{2}{*}{} & \multicolumn{9}{c}{$\mu_{\mathrm{LFR}}$}\\
\cline{3-11}
&  & 0 & 0.1 & 0.2 & 0.3 & 0.4 & 0.5 & 0.6 & 0.7 &  0.8\\
 \hline
 \multirow{5}{*}{Danon } & NMI &  0.9998 & 0.9891 & 0.9394 & 0.8504 & 0.7331 & 0.5808 & 0.3781 & 0.1412 & 0.0548\\
 & AMI &  0.9998 & 0.9870 & 0.9166 & 0.7922 & 0.6399 & 0.4736 & 0.2878 & 0.0935 & 0.0215\\
 & Mod. &  0.9496 & 0.8436 & 0.7225 & 0.5920 & 0.4687 & 0.3452 & 0.2458 & 0.1892 & 0.1814\\
 & purity &  0.9999 & 0.9938 & 0.9739 & 0.9414 & 0.9058 & 0.8344 & 0.6886 & 0.4269 & 0.3095\\
 & time &  2.8304 & 2.8597 & 2.8199 & 2.7648 & 2.7287 & 2.8318 & 2.7739 & 2.7625 & 2.7318\\
 & $q_c$ & 20 & 20 & 18 & 15 & 11 & 9 & 7 & 7 & 8\\
 \hline
  \multirow{5}{*}{Danon\_force\_q} & NMI &  0.9998 & 0.9889 & 0.9397 & 0.8494 & 0.7309 & 0.5794 & 0.3939 & 0.1778 & 0.0954\\
& AMI &  0.9998 & 0.9870 & 0.9201 & 0.7962 & 0.6415 & 0.4727 & 0.2950 & 0.1035 & 0.0282\\
 & Mod. &  0.9496 & 0.8433 & 0.7209 & 0.5868 & 0.4615 & 0.3394 & 0.2431 & 0.1872 & 0.1794\\
& purity & 0.9999 & 0.9935 & 0.9714 & 0.9349 & 0.8954 & 0.8198 & 0.6778 & 0.4201 & 0.3065\\
& time & 2.8353 & 2.9606 & 2.8222 & 2.8068 & 2.8055 & 2.8959 & 2.8183 & 2.8459 & 2.7432\\
& $q_c$ & 20 & 20 & 20 & 20 & 20 & 20 & 20 & 20 & 20\\
\hline
 \multirow{5}{*}{Louvain}  & NMI &1.0000 & 1.0000 & 1.0000 & 1.0000 & 1.0000 & 0.9987 & 0.9805 & 0.2862 & 0.0784\\
 & AMI & 1.0000 & 1.0000 & 1.0000 & 1.0000 & 1.0000 & 0.9974 & 0.9652 & 0.2249 & 0.0358\\
 & Mod. & 0.9497 & 0.8499 & 0.7503 & 0.6500 & 0.5499 & 0.4496 & 0.3477 & 0.2098 & 0.1967\\
 & purity & 1.0000 & 1.0000 & 1.0000 & 1.0000 & 1.0000 & 0.9999 & 0.9950 & 0.4734 & 0.2660\\
 & time & 0.5444 & 0.7291 & 1.3703 & 1.8963 & 2.6797 & 3.3418 & 4.5768 & 9.2669 & 8.8130\\
  & $q_c$ & 20 & 20 & 20 & 20 & 20 & 20 & 19 & 11 & 11\\
 \hline
 \multirow{5}{*}{Louvain\_force\_q}  & NMI &1.0000 & 1.0000 & 1.0000 & 1.0000 & 1.0000 & 0.9987 & 0.9805 & 0.2981 & 0.0847\\
& AMI & 1.0000 & 1.0000 & 1.0000 & 1.0000 & 1.0000 & 0.9974 & 0.9652 & 0.2382 & 0.0392\\
 & Mod. & 0.9497 & 0.8499 & 0.7503 & 0.6500 & 0.5499 & 0.4496 & 0.3477 & 0.2098 & 0.1967\\
& purity & 1.0000 & 1.0000 & 1.0000 & 1.0000 & 1.0000 & 0.9999 & 0.9950 & 0.4642 & 0.2418\\
 & time & 0.5440 & 0.7478 & 1.0333 & 1.2042 & 1.7002 & 2.0761 & 2.7676 & 5.4529 & 5.5061\\
 & $q_c$ & 20 & 20 & 20 & 20 & 20 & 20 & 19 & 12 & 12\\
\hline
%\multirow{5}{*}{Newman\_Eig} & NMI & 0.9988 & 0.8575 & 0.8003 & 0.7240 & 0.5832 & 0.4410 & 0.2840 & 0.1278 & 0.0490\\
% & AMI & 0.9982 & 0.7974 & 0.7543 & 0.6909 & 0.5238 & 0.3621 & 0.2135 & 0.0841 & 0.0216\\
%  & Mod. &  0.9479 & 0.6101 & 0.5115 & 0.4225 & 0.3211 & 0.2525 & 0.1916 & 0.1629 & 0.1481\\
% & purity & 0.9994 & 0.9105 & 0.8351 & 0.7311 & 0.6541 & 0.6221 & 0.5594 & 0.4823 & 0.3427\\
% & time &  71.8843 & 29.3700 & 23.9832 & 21.0710 & 19.6523 & 19.8157 & 19.0763 & 18.6906 & 18.0828\\
%  & $q_c$& 20 & 37 & 29 & 23 & 16 & 10 & 7 & 5 & 6\\  
% \hline
% \multirow{5}{*}{Newman\_Eig\_force\_q} & NMI & 0.9988 & 0.6402 & 0.6923 & 0.6978 & 0.5832 & 0.4410 & 0.2840 & 0.1278 & 0.0490\\
% & AMI & 0.9982 & 0.5211 & 0.6186 & 0.6597 & 0.5238 & 0.3621 & 0.2135 & 0.0841 & 0.0216\\
%  & Mod. &  0.9347 & 0.3153 & 0.3512 & 0.3760 & 0.3211 & 0.2525 & 0.1916 & 0.1629 & 0.1481\\
% & purity &  0.9989 & 0.8366 & 0.7788 & 0.7208 & 0.6541 & 0.6221 & 0.5594 & 0.4823 & 0.3427\\
% & time &  70.7433 & 29.7535 & 23.9806 & 21.2150 & 20.3703 & 19.3982 & 19.1880 & 18.8849 & 18.2439\\
%  & $q_c$ & 20 & 20 & 20 & 20 & 16 & 10 & 7 & 5 & 6\\
%\hline
\multirow{5}{*}{I-AManPG}  & NMI & 1.0000 & 1.0000 & 1.0000 & 1.0000 & 1.0000 & 0.9998 & 0.9600 & 0.4517 & 0.1294\\
& AMI & 1.0000 & 1.0000 & 1.0000 & 1.0000 & 1.0000 & 0.9998 & 0.9539 & 0.4037 & 0.0563\\
& Mod. & 0.9497 & 0.8499 & 0.7503 & 0.6500 & 0.5499 & 0.4498 & 0.3416 & 0.1735 & 0.1113\\
& purity & 1.0000 & 1.0000 & 1.0000 & 1.0000 & 1.0000 & 0.9999 & 0.9679 & 0.5605 & 0.3044\\
& time &  0.6357 & 0.4693 & 0.5870 & 0.9494 & 0.6749 & 0.4720 & 1.0332 & 1.6307 & 1.6757\\
& $q$ & 20 & 20 & 20 & 20 & 20 & 20 & 20 & 20 & 20\\
 \hline
\end{tabular}
}
\end{center}
\end{table}

%We implement the algorithms on the LFR benchmark networks with different mixing parameters $\mu_{\mathrm{LFR}}=0, 0.1, 0.2,...,0.8$.  As the mixing parameter increases, the difficulty level of detecting communities increases. The results showed in Table \ref{table2} are average results of 10 random runs for the randomly generated LFR benchmark graphs.
The input parameter for I-AManPG determining the number of communities to be produced is $q$ and $q_c$ is the number of communities computed by each method. The "force\_q" label denotes the versions with modified termination criteria so that $q_c$ is as close to $q_{true}$ as the methods allow. 

From the results in Table \ref{table2}, we observe that when $\mu_{\mathrm{LFR}}=0$,  I-AManPG yields $\mathrm{NMI} = \mathrm{AMI} = \mathrm{purity} = 1$, the same modularity value and the same assignment to $q_{true} = 20$ strongly connected communities.  The Louvain method also produces the ground-truth communities. Danon's method %and Newman's spectral method 
produces results that are very close to the ground-truth. Specifically it detects the ground-truth communities for 9 of 10 random LFR graphs.  When $\mu_{\mathrm{LFR}}$ is 0.1 to 0.4, I-AManPG and the Louvain method detect the ground-truth communities while Danon's method is not as successful. When $\mu_{\mathrm{LFR}}=0.5, 0.6$, I-AManPG produces results very close to ground-truth and the results are competitive with those of the Louvain method, but require less computational time. When $\mu_{\mathrm{LFR}}=0.7, 0.8$,  the results for all four methods are far from ground-truth partitions because the community structure is not strong for such $\mu_{\mathrm{LFR}}$ values.  Danon's method % and Newman spectral method 
detects relatively inaccurate communities and has relatively small qualifying external or internal measurements, i.e., NMI, AMI, purity and modularity, for all noisy cases.  From the reported computational times, it can be seen that I-AManPG requires relatively less time than the other methods.  Note that the number of edges, $m$, does not change much while the distribution of edges changes significantly as the mixing parameter increases. So, the computational times for Danon's method %and Newman's spectral method 
does not change much as the mixing parameter increases since their computational times depend more on $m$ than on the distribution of edges. %The complexity of  are $O((m+n)n)$ and $O((m+n)n\log n)$ respectively.

As the mixing parameter increases, the difficulty level of detecting the correct number of communities increases as well.  I-AManPG requires the desired number of communities as an input parameter value, $q$,  and the choice of an initial $q$ and the development of a dynamic adaptation strategy are key ongoing research tasks for I-AManPG. Since the experiments in Table \ref{table2} use $q=q_{true}$ for I-AManPG and the other methods that are not "forced" are given no indication of $q_{true}$, experiments where I-AManPG uses $q \neq q_{true}$ probe the quality of the $q \neq q_{true}$ communities produced by I-AManPG compared to ground truth.
For each value of the input parameter $q =10,  18, 19, 20, 23$ and mixing parameter $\mu_{\mathrm{LFR}}=0, 0.1, \dotsc , 0.8$,  I-AManPG was applied to 10 randomly generated LFR benchmark networks. The results are shown in Table \ref{table3}.

\begin{table}[htbp]
\renewcommand\tabcolsep{2.0pt}
\begin{center}
\caption{Testing the effectiveness of I-AManPG for input parameter values $q\neq q_{true}$.}\label{table3}
%\resizebox{\textwidth}{!}
\small{
\begin{tabular}{l |c |c| c |c |c |c| c| c|c|c}
\hline
\hline
\multirow{2}{*}{ } & \multirow{2}{*}{} & \multicolumn{9}{c}{$\mu_{\mathrm{LFR}}$}\\
\cline{3-11}
&  & 0 & 0.1 & 0.2 & 0.3 & 0.4 & 0.5 & 0.6 & 0.7 &  0.8\\
\multirow{5}{*}{I-AManPG}  & NMI & 0.7178 & 0.7178 & 0.7178 & 0.7176 & 0.7137 & 0.6966 & 0.6230 & 0.2957 & 0.0795\\
& AMI & 0.5452 & 0.5452 & 0.5452 & 0.5451 & 0.5428 & 0.5327 & 0.4856 & 0.2202 & 0.0324\\
& Mod. & 0.6924 & 0.6198 & 0.5474 & 0.4733 & 0.4005 & 0.3251 & 0.2531 & 0.1534 & 0.1007\\
 & purity &  1.0000 & 1.0000 & 1.0000 & 0.9999 & 0.9971 & 0.9842 & 0.9243 & 0.6363 & 0.5118\\
& time &  0.1939 & 0.2175 & 0.5020 & 0.3373 & 0.2593 & 0.4983 & 0.3297 & 0.7154 & 0.6616\\
 & $q$ & 10 & 10 & 10 & 10 & 10 & 10 & 10 & 10 & 10\\
 \hline
%\multirow{5}{*}{I-AManPG}  & NMI & 0.8024 & 0.8024 & 0.8024 & 0.8024 & 0.8000 & 0.7878 & 0.7216 & 0.3359 & 0.0884\\
%& AMI & 0.6565 & 0.6565 & 0.6565 & 0.6565 & 0.6550 & 0.6470 & 0.6002 & 0.2626 & 0.0359\\
%& Mod. & 0.7837 & 0.7017 & 0.6190 & 0.5358 & 0.4533 & 0.3683 & 0.2835 & 0.1598 & 0.1055\\
%& purity &  1.0000 & 1.0000 & 1.0000 & 1.0000 & 0.9983 & 0.9894 & 0.9378 & 0.6144 & 0.4522\\
%& time &  0.4716 & 0.3680 & 0.3648 & 0.3553 & 0.3806 & 0.7307 & 0.4830 & 0.8810 & 0.9403\\
%& $q$ & 12 & 12 & 12 & 12 & 12 & 12 & 12 & 12 & 12\\
% \hline
%\multirow{5}{*}{I-AManPG}  & NMI & 0.9531 & 0.9523 & 0.9531 & 0.9523 & 0.9531 & 0.9493 & 0.9121 & 0.4132 & 0.1124\\
%& AMI & 0.9049 & 0.9034 & 0.9049 & 0.9034 & 0.9049 & 0.8999 & 0.8675 & 0.3535 & 0.0466\\
%& Mod. & 0.9245 & 0.8266 & 0.7306 & 0.6324 & 0.5363 & 0.4367 & 0.3362 & 0.1724 & 0.1100\\
%& purity &  1.0000 & 1.0000 & 1.0000 & 1.0000 & 1.0000 & 0.9985 & 0.9704 & 0.5765 & 0.3498\\
%& time &  1.0201 & 0.7729 & 0.6772 & 0.6584 & 0.7162 & 0.7193 & 1.0411 & 1.7427 & 1.5559\\
%& $q$ & 17 & 17 & 17 & 17 & 17 & 17 & 17 & 17 & 17\\
% \hline
\multirow{5}{*}{I-AManPG}  & NMI & 0.9717 & 0.9722 & 0.9665 & 0.9726 & 0.9717 & 0.9709 & 0.9411 & 0.4298 & 0.1183\\
& AMI & 0.9415 & 0.9424 & 0.9368 & 0.9433 & 0.9415 & 0.9417 & 0.9131 & 0.3733 & 0.0498\\
& Mod. & 0.9362 & 0.8382 & 0.7367 & 0.6423 & 0.5427 & 0.4442 & 0.3424 & 0.1721 & 0.1089\\
& purity &  1.0000 & 1.0000 & 0.9947 & 1.0000 & 1.0000 & 0.9989 & 0.9773 & 0.5746 & 0.3379\\
& time &  1.0509 & 0.9585 & 0.7564 & 0.7069 & 0.8014 & 0.7777 & 1.1587 & 2.0853 & 1.9374\\
& $q$ & 18 & 18 & 18 & 18 & 18 & 18 & 18 & 18 & 18\\
\hline
\multirow{5}{*}{I-AManPG}  & NMI & 0.9883 & 0.9883 & 0.9845 & 0.9883 & 0.9796 & 0.9828 & 0.9520 & 0.4339 & 0.1264\\
& AMI & 0.9753 & 0.9753 & 0.9716 & 0.9753 & 0.9667 & 0.9699 & 0.9365 & 0.3803 & 0.0557\\
& Mod. & 0.9453 & 0.8460 & 0.7446 & 0.6474 & 0.5445 & 0.4468 & 0.3427 & 0.1728 & 0.1111\\
& purity & 1.0000 & 1.0000 & 0.9962 & 1.0000 & 0.9914 & 0.9949 & 0.9714 & 0.5634 & 0.3203\\
& time &  0.9389 & 0.9057 & 0.8584 & 0.9534 & 0.7417 & 1.0226 & 1.0085 & 2.1798 & 1.8913\\
& $q$ & 19 & 19 & 19 & 19 & 19 & 19 & 19 & 19 & 19\\
 \hline
\multirow{5}{*}{I-AManPG}  & NMI & 1.0000 & 1.0000 & 1.0000 & 1.0000 & 1.0000 & 0.9998 & 0.9600 & 0.4517 & 0.1294\\
& AMI & 1.0000 & 1.0000 & 1.0000 & 1.0000 & 1.0000 & 0.9998 & 0.9539 & 0.4037 & 0.0563\\
& Mod. & 0.9497 & 0.8499 & 0.7503 & 0.6500 & 0.5499 & 0.4498 & 0.3416 & 0.1735 & 0.1113\\
& purity & 1.0000 & 1.0000 & 1.0000 & 1.0000 & 1.0000 & 0.9999 & 0.9679 & 0.5605 & 0.3044\\
& time &  0.6577 & 0.5052 & 0.6423 & 0.9890 & 0.7112 & 0.4951 & 1.0755 & 1.7648 & 1.7855\\
& $q$ & 20 & 20 & 20 & 20 & 20 & 20 & 20 & 20 & 20\\
 \hline
%\multirow{5}{*}{I-AManPG}  & NMI & 0.9944 & 0.9946 & 0.9946 & 0.9945 & 0.9945 & 0.9927 & 0.9620 & 0.4541 & 0.1322\\
%& AMI & 0.9881 & 0.9885 & 0.9885 & 0.9881 & 0.9883 & 0.9863 & 0.9547 & 0.4056 & 0.0566\\
%& Mod. & 0.9303 & 0.8338 & 0.7370 & 0.6380 & 0.5402 & 0.4422 & 0.3384 & 0.1731 & 0.1107\\
%& purity & 1.0000 & 1.0000 & 1.0000 & 1.0000 & 1.0000 & 0.9985 & 0.9671 & 0.5498 & 0.1751\\
%& time &  1.6041 & 1.2957 & 1.4677 & 1.1984 & 2.4890 & 1.3366 & 1.0639 & 1.9795 & 2.0597\\
%& $q$ & 21 & 21 & 21 & 21 & 21 & 21 & 21 & 21 & 21\\
%\hline
%\multirow{5}{*}{I-AManPG}  & NMI & 0.9890 & 0.9890 & 0.9894 & 0.9890 & 0.9887 & 0.9868 & 0.9649 & 0.4649 & 0.1352\\
%& AMI & 0.9764 & 0.9765 & 0.9773 & 0.9765 & 0.9763 & 0.9742 & 0.9500 & 0.4159 & 0.0574\\
%& Mod. & 0.9124 & 0.8161 & 0.7239 & 0.6263 & 0.5315 & 0.4349 & 0.3358 & 0.1728 & 0.1123\\
%& purity & 1.0000 & 1.0000 & 1.0000 & 1.0000 & 0.9996 & 0.9981 & 0.9831 & 0.5642 & 0.1770\\
%& time &  1.7685 & 1.6543 & 1.4107 & 0.8671 & 1.7232 & 1.4291 & 0.9652 & 2.4757 & 2.5771\\
%& $q$ & 22 & 22 & 22 & 22 & 22 & 22 & 22 & 22 & 22\\
%\hline
\multirow{5}{*}{I-AManPG}  & NMI & 0.9835 & 0.9838 & 0.9842 & 0.9834 & 0.9836 & 0.9805 & 0.9532 & 0.4622 & 0.1402\\
& AMI & 0.9649 & 0.9654 & 0.9663 & 0.9647 & 0.9655 & 0.9618 & 0.9317 & 0.4105 & 0.0606\\
& Mod. & 0.8937 & 0.8009 & 0.7114 & 0.6148 & 0.5233 & 0.4278 & 0.3288 & 0.1710 & 0.1120\\
& purity & 1.0000 & 1.0000 & 1.0000 & 1.0000 & 0.9997 & 0.9975 & 0.9784 & 0.5662 & 0.1798\\
& time &  1.9044 & 1.7584 & 1.6944 & 1.2244 & 1.8282 & 2.0833 & 1.3244 & 2.9445 & 2.5133\\
& $q$ & 23 & 23 & 23 & 23 & 23 & 23 & 23 & 23 & 23\\
 \hline
\end{tabular}
}
\end{center}
\end{table}

First note that, from the results in Table \ref{table2} and Table \ref{table3}, the NMI, AMI, modularity and purity from I-AManPG are better than or competetive with the results for the other methods, regular or modified, when they produce a $q_c$ in the set of the $q$ values used for I-AManPG in Table \ref{table3}; specifically, when Danon's method with $\mu_{\mathrm{LFR}}=0.2$ produces $q_c=18$,  %Newman's spectral method for $\mu_{\mathrm{LFR}}=0.3$ produces $q_c=23$, Newman's spectral method for $\mu_{\mathrm{LFR}}=0.5$ produces $q_c=10$, 
and the Louvain method for $\mu_{\mathrm{LFR}}=0.6$ produces $q_c=19$. Furthermore, in all of these cases I-AManPG requires less computational time.
In general, even with $q \neq q_{true}$ the partitions produced by I-AManPG tend to be better than those computed by the other methods. However, since not all of these partitions achieve high NMI and AMI, a more detailed consideration is required to assess the effectiveness of I-AManPG. This is done using purity and assessing the partitions nesting relative to ground truth.
Table \ref{table3} shows that for any particular $\mu_{\mathrm{LFR}}$, as the value of the input parameter $q$ moves away from $q_{true}=20$ the modularity decreases, while the NMI and AMI achieved I-AManPG move away from desireable values close to $1$. This does not mean that the partitions are not good relative to the ground-truth partition. For $\mu_{\mathrm{LFR}}=0$ and $ 0.1$ the community partitions for $q = 18, 19, 20$ are perfectly nested, i.e., the extra communities of partition of $q+1$ are refinements of the partition of $q$ by splitting without crossing the ideal community boundaries. Since $q_{true}$ is in this set, this says that for values of $\mu_{\mathrm{LFR}}$ that imply strong community structure I-AManPG produces communities that respect the affinities of the ground-truth partition. For these two values of $\mu_{\mathrm{LFR}}$, when $q=10$, the farthest from $q_{true}$ in the set, the I-AManPG partitions perfectly still nested relative to %the next partition, i.e., $q=17$, and 
the ground-truth partition. When $\mu_{\mathrm{LFR}}$ has values from  0.2 to 0.6, each partitioning for $q = 10, 18, 19, 23$ is well-nested with partitioning of $q_{true}$ and the associated purity values are very close to $1$. For $\mu_{\mathrm{LFR}}=0.7$ and $ 0.8$, the community structure is not strong in the ground-truth LFR networks and therefore purity would be expected to degrade. This is observed but note that, as in Table \ref{table2}, I-AManPG requires less computational time to produce its community partition.
These results provide promising evidence for the possibility of development of a dynamic adaptation strategy for I-AManPG. Since the sparsification to project a Stiefel element to an assignment applies, in general, to a dense $N \times q$ matrix, storage and computation can become excessive when a large number of communities must be produced. Effective nesting means this can be avoided efficiently as is done with other divisive projection-based algorithms.

\subsection{Normalized cut}

Normalized cut has been widely used for image segmentation.
Its optimization formulation is given by 
\begin{equation} \label{e37}
\min_{X \in \mathcal{A}_v} {f_{\mathrm{NC}}(X) = - \mathrm{trace} ( X^T D^{-1/2} W D^{-1/2} X )}.
\end{equation}
In the case of gray image segmentation, the matrix $W \in \mathbb{R}^{mn \times mn}$ is an affinity matrix of an $m$ by $n$ pixels gray image, $D \in \mathbb{R}^{mn \times mn}$ is a diagonal matrix with $D_{ii} = \sum_{j = 1}^{mn} W_{i j}$, and $v = \mathrm{diag}(D^{1/2})$. Here, we use the approach in~\cite{CYS2004} to choose $W$ and $D$. %, to further shift the diagonal entries of $W$ and $D$ by a constant. 

Problem~\eqref{e37} can be optimized by the weighted kernel $k$-means algorithm, see e.g.,~\cite[Algorithm~1]{DGK2005}. Note that Problem~\eqref{e37} has many low-quality local minimizers and descent optimization algorithms usually are not able to escape from them. Thus, initialization plays an important role in finding an acceptable solution. Let $U$ be the $n \times q$ matrix of the $q$ leading eigenvector of the matrix $D^{-1/2} W D^{-1/2}$. If $X$ is only required to be orthonormal, then $U$ is a global minimizer of~\eqref{e37}. Since $U$ is unlikely to be in $\mathcal{A}_v$, one approach is to find a matrix in $\mathcal{A}_v$ that is close to $U$. Different notions of closeness yield different methods. Next, we introduce four initialization methods,  including the proposed one based on AManPG. %It is shown empirically that the proposed method often find better minimizers in the sense of objective function value, see Figure~\ref{fig:FNC}.

%\begin{enumerate}
%\item 
Bach and Jordan~\cite{BJ2003} seek to find a matrix $Y \in \mathcal{A}_v$ that minimizes
\begin{equation} \label{e38}
\|U U^T - Y Y^T\|_F.
\end{equation}
In other words, the difference between $U$ and $Y$ is measured by the orthogonal projection matrix. The weighted kernel $k$-means is suggested to solve~\eqref{e38} see~\cite[Figure~1]{BJ2003}. However, similar to~\eqref{e37}, the kernel $k$-means for~\eqref{e38} may also get stuck in a local minimizer. We use $k$-means++ in Matlab for the initialization of the kernel $k$-means for~\eqref{e38}.
%\item 

Shi and Malik~\cite{SM2000} propose to find an indicator matrix that is closest to $U$ up to a rotation.
% \whcomm{[KG indicator already defined]}{}
% where an indicator matrix $Z$ is defined by $Z \geq 0, Z^TZ$ is diagonal, and each row of $Z$ has an entry being one.
Specifically, let $\tilde{U}$ denote the matrix formed by normalizing all rows of $U$. The task is to find an indicator matrix $Z$ and a $q$-by-$q$ orthonormal matrix $Q$ that minimize
$
\|Z - \tilde{U} Q\|_F.
$
Shi and Malik~\cite{SM2000} use an alternating minimization algorithm to find $Z$ and $Q$. Note that this approach neither guarantees to find the global optimum nor use the weight vector $v$. Therefore, this approach may not find a satisfactory solution. Here, we use the C and Matlab hybrid implementation from~\cite{CYS2004}.

%\item 
Karypis and Kumar~\cite{KK1998} developed METIS, a fast, multi-level graph partitioning algorithm that produces equally-sized clusters. It is shown to be an effective method for the kernel $k$-means initialization. Note that METIS does not aim to minimize the objective~\eqref{e37}. We use the C implementation from~\url{http://glaros.dtc.umn.edu/gkhome/metis/metis/download} with the Matlab interface from~\url{https://github.com/dgleich/metismex}.

%\item 
We propose to initialize the weighted kernel $k$-means algorithm by I-AManPG. % with dynamically increasing $\lambda$ by 0.01, 0.04, and 0.2. %, as stated in Algorithm~\ref{alg:NC}. 
Specifically, Problem~\eqref{e37} can be reformulated as
\begin{equation} \label{e39}
\min_{X \in \mathcal{F}_v} - \mathrm{trace} ( X^T D^{-1/2} W D^{-1/2} X ) + \lambda \|X\|_1,
\end{equation}
which can be optimized by I-AManPG. We further propose to gradually increasing $\lambda$ rather than a fixed value of $\lambda$ since increasing $\lambda$ tends to give better solutions in our experiments\footnote{The $\lambda$ in I-AManPG increases by 0.01, 0.04, and 0.2}. The clusters are specified by $P_{\mathcal{A}_v}(X_*)$, as described in Section~\ref{sect:para}. Such clusters are then used as initializations for the weighted kernel $k$-means algorithm.
%%\begin{algorithm}[ht!]
%%\caption{AManPG initialization for normalized cut}
%%\label{alg:NC}
%%\begin{algorithmic}[1]
%%\Ensure $x_*$;
%%\State $x_0 = P_{\mathcal{F}_v} U$, $\sigma_{\max}$ is the largest eigenvalue of $D^{-1/2} W D^{-1/2}$;
%%\For {$a = 0.01, 0.04, 0.2$}
%%\State Use I-AManPG to solve~\eqref{e39} with $\lambda = a \sigma_{\max} / \|x_0\|_1$ until $\|\eta_{z_k}\|$ reduces by a factor of $10^2$. The last iterate is denoted by $\tilde{x}$;
%%\State Set $x_0 = \tilde{x}$;
%%\EndFor
%%\State $x_* = \tilde{x}$;
%%\end{algorithmic}
%%\end{algorithm}
%\end{enumerate}

The four initialization methods are denoted, respectively, by BJ, SM, ME, and AM. Their combinations with the weighted kernel $k$-means algorithms are denoted, respectively, by BJ-k, SM-k, ME-k, and AM-k. The implementation of the weighted kernel $k$-means algorithm is modified from~\cite{Chen2021}\footnote{The implementation in~\cite{Chen2021} is for unweighted kernel $k$-means. We modified it for weighted kernel $k$-means.}. The test images are from~\cite{CYS2004} and the built-in images in Matlab. We further resize them to have 160-by-160 pixels as shown in Figure~\ref{fig:testedimages}. For tests of more images, we refer to~\cite{HWGV2022}.

\begin{figure}
\hspace{-3em}\includegraphics[width=1.3\textwidth]{testedimages6}
\vspace{-2em}
\caption{The test images.}
\label{fig:testedimages}
\end{figure}

An average of the negative function values $-f_{\mathrm{NC}}$ of 10 random runs are reported in Figure~\ref{fig:FNC}. We do not report the computational time since the implementations of these methods use different languages and comparing their computational time is not fair. The qualities of these methods are compared based on the objective function value $f_{\mathrm{NC}}$.
As shown in the figure, METIS initializations are not preferred since they do not aim to minimize $f_{\mathrm{NC}}$. Though SM, SM-k, BJ, BJ-k are competitive to AM and AM-k in many cases, they do not perform well in certain images, such as ME and ME-k for the football image with 3 clusters, and BJ and BJ-k for the tape image with 3 clusters. AManPG based methods are clearly most robust in the sense of minimizing the function $f_{\mathrm{NC}}$ over $\mathcal{A}_v$. The values of $-f_{\mathrm{NC}}$ by AM-k are often the highest one. Even if they are not, they are still close to the highest ones.
% As an example, the segmentations of the four methods for the baby image are shown in Figure~\ref{fig:babyseg}.

\begin{figure}
\centerline{
\includegraphics[width=0.35\textwidth]{baby}
\includegraphics[width=0.35\textwidth]{cameraman}
\includegraphics[width=0.35\textwidth]{coins}
}
\centerline{
\includegraphics[width=0.35\textwidth]{football}
\includegraphics[width=0.35\textwidth]{gantrycrane}
\includegraphics[width=0.35\textwidth]{liftingbody}
}
%\centerline{
%\includegraphics[width=0.35\textwidth]{onion}
%\includegraphics[width=0.35\textwidth]{panther}
%\includegraphics[width=0.35\textwidth]{pears}
%}
%\centerline{
%\includegraphics[width=0.35\textwidth]{peppers}
%\includegraphics[width=0.35\textwidth]{saturn}
%\includegraphics[width=0.35\textwidth]{tape}
%}
\caption{An average of 10 random runs is reported. $y$-axis represents the function values. Multiple numbers of clusters are tested.
}
\label{fig:FNC}
\end{figure}

}

\section{Conclusions and Future Work} \label{sect:con}

We proposed an optimization model for clustering problems. The domain $\mathcal{F}_v$ was proven to be an embedded submanifold and its geometry structures were derived. An inexact accelerated Riemannian proximal gradient method was proposed and its global convergence proved. It was shown empirically that the proposed optimization model was more effective that the state-of-the-art methods in community detection and normalized cut for image segmentation.

Future work will address more comprehensive analysis of the choice of the parameters $\lambda$ and $q$. The current method requires an estimation of the number of clusters $q$. A critical future task is to develop a strategy to dynamically update the number of clusters thereby enabling more efficient computation for problems with a large number of communities.

\bibliographystyle{alpha}
\bibliography{WHlibrary}

\end{document}